\documentclass[12pt,reqno]{amsart}

\usepackage{amssymb,latexsym}
\usepackage{amsmath}
\usepackage{amsthm}
\usepackage{rotating}
\usepackage{graphicx}
\usepackage{latexsym}
\usepackage{amsfonts}
\usepackage{graphics}
\usepackage{color}
\usepackage{eufrak}
\usepackage{amstext}
\usepackage{amsopn}
\usepackage{amsbsy}
\usepackage{amscd}
\usepackage{amsxtra}
\usepackage{enumerate}
\usepackage{titletoc}
\usepackage{amsfonts}

\newcommand{\overbar}[1]{\mkern 1.5mu\overline{\mkern-1.5mu#1\mkern-1.5mu}\mkern 1.5mu}

\addtolength{\oddsidemargin}{-.875in}
	\addtolength{\evensidemargin}{-.875in}
	\addtolength{\textwidth}{1.75in}

	\addtolength{\topmargin}{-.175in}
	\addtolength{\textheight}{.3 in}

\newtheorem{theorem}{Theorem}[section]
\newtheorem{proposition}[theorem]{Proposition}
\newtheorem{lemma}[theorem]{Lemma}
\newtheorem{corollary}[theorem]{Corollary}

\newtheorem*{conjecture}{Conjecture \cite{WY-10}}

\newtheorem{openproblem}{Question}

\newtheorem{definition}[theorem]{Definition}

\newtheorem{remark}{Remark}

\theoremstyle{definition}

\numberwithin{equation}{section}

\theoremstyle{remark}

\title[Infinitely many positive solutions]{Infinitely many positive solutions of nonlinear Schr\"{o}dinger equations with  non-symmetric potentials}

\author{Manuel del Pino}
\address{\noindent M. del Pino - Departamento de Ingenier\'{\i}a  Matem\'atica and Centro de
Modelamiento Matem\'atico (UMI 2807 CNRS), Universidad de Chile, Casilla 170 Correo 3,
Santiago, Chile}
\email{delpino@dim.uchile.cl}

\author{Juncheng Wei}
\address{\noindent J. Wei - Department of Mathematics, University of British Columbia, Vancouver, BC V6T 1Z2, Canada and Department of Mathematics, Chinese University of Hong Kong, Shatin, NT, Hong Kong}
\email{jcwei@math.ubc.ca}

\author{Wei Yao}
\address{\noindent W.Yao - Departamento de Ingenier\'{\i}a  Matem\'atica and Centro de
Modelamiento Matem\'atico (UMI 2807 CNRS), Universidad de Chile, Casilla 170 Correo 3,
Santiago, Chile}
\email{wyao.cn@gmail.com}

\date{}

\begin{document}

\maketitle

\begin{abstract}

We consider the standing-wave problem for a nonlinear Schr\"{o}dinger equation, corresponding to the semilinear elliptic problem
\begin{equation*}
-\Delta u+V(x)u=|u|^{p-1}u,\ u\in H^1(\mathbb{R}^2),
\end{equation*}
where $V(x)$ is a uniformly positive potential and $p>1$.
Assuming that \begin{equation*}
V(x)=V_\infty+\frac{a}{|x|^m}+O\Big(\frac{1}{|x|^{m+\sigma}}\Big),\ \text{as}\ |x|\rightarrow+\infty,
\end{equation*}
for instance if $p>2$, $m>2$ and $\sigma>1$  we prove the existence of infinitely many positive solutions.
If $V(x)$ is radially symmetric, this result was proved in \cite{WY-10}. The proof without symmetries is much more difficult, and for that
 we develop a new {\em intermediate Lyapunov-Schmidt reduction method}, which is a compromise between the finite and infinite dimensional
 versions of it.


\end{abstract}

\tableofcontents


\section{Introduction and statement of the main result}


In this paper we consider the problem of finding positive solutions of the classical semilinear elliptic problem
\begin{equation}\label{eq-V-u}
-\Delta u+V(x)u=|u|^{p-1}u\ \text{in}\ \mathbb{R}^N,
\end{equation}
where $\Delta=\sum_{j=1}^N\frac{\partial^2}{\partial x_j^2}$ stands for the Laplace operator in  $\mathbb{R}^N$, $V(x)$ is a non-negative potential, and $p>1$.


\medskip

Equation \eqref{eq-V-u} arises in various branches of applied mathematics and physics (cf. \cite{BL-83} and references therein). For instance, in condensed matter physics one simulates the interaction effect among many particles to obtain a focusing nonlinear Schr\"{o}dinger equation of the form
\begin{align}\label{NLS}
i\hbar\frac{\partial{\psi}}{\partial t}=-\hbar^2\Delta{\psi}+ W(x){\psi}-|{\psi}|^{p-1}{\psi}\ \text{in}\ [0,\infty)\times\mathbb{R}^N,
\end{align}
where $i$ is the imaginary unit, $\hbar$ the Planck constant and $W(x)$ a given potential.  
{\em Standing wave solutions} of \eqref{NLS} are those of the form
$${\psi}(t,x)=e^{-i\lambda t/\hbar}u(\hbar^{-1}x) $$  where $u(x)$ is a real-valued function. 
Then \eqref{NLS}  reduces to equation~\eqref{eq-V-u} for $u$, where  $V(x)= W(\hbar x)-\lambda$.


\medskip
In what follows, we shall only consider positive, finite energy solutions of \eqref{eq-V-u}. Namely, we are concerned with the problem:
\begin{align}\label{eq-u}
\left\{
\begin{array}{ll}
-\Delta u+V(x) u-u^p=0\ \text{in}\ \mathbb{R}^N,
\vspace{1mm}\\
u>0\ \text{in}\ \mathbb{R}^N,\ u\in H^1(\mathbb{R}^N).
\end{array}\right.
\end{align}
Associated to \eqref{eq-u} is the energy functional
\begin{equation}\label{energy}
\mathcal{E}(u)=\frac{1}{2}\int_{\mathbb{R}^N}\Big\{|\nabla u|^2+V(x)u^2\Big\}dx-\frac{1}{p+1}\int_{\mathbb{R}^N}u_+^{p+1}\,dx,
\end{equation}
where $u_+=\max\{u,0\}$.
In all what follows, we make the following structure assumptions on $V$ and $p$:
\begin{equation} \hbox{  $V$ is locally H\"older continuous, $V\in L^\infty(\mathbb{R}^N)$ and $V_0=\inf_{x\in\mathbb{R}^N}V(x)>0$} \label{(V0)}\end{equation}
\begin{equation} \hbox{
 $1<p<\infty$ for $N=2$ and $1<p<\frac{N+2}{N-2}$ for $N\geq3$.
} \qquad\qquad\qquad\qquad\quad\label{(P)}\end{equation}

\medskip
Under these hypotheses, it is standard that classical solutions of \eqref{eq-u} correspond precisely to non-trivial critical points of
$\mathcal{E}$ in  $H^1(\mathbb{R}^N)$.

\medskip
Let us denote the set of solutions of problem \eqref{eq-u} by $\mathcal{S}_V$. A natural question is whether or not $\mathcal{S}_V\neq\emptyset$. When $V$ is radially symmetric, the answer is yes (cf. Theorem 4.6 in \cite{DN-86}). But the answer is no when the potential is increasing along a direction (cf. Theorem 1.1 in \cite{CM-10}).

If we further assume that
\begin{equation}\label{V-infinity}
\lim_{|x|\rightarrow\infty}V(x)=V_\infty>0,
\end{equation}
the existence of a positive solution of \eqref{eq-u} has been widely investigated. For example, if we further suppose that
\begin{align}\label{cond-V-1}
\inf_{x\in\mathbb{R}^N}V(x)<V_\infty,
\end{align}
then one can show that \eqref{eq-u} has a least energy (ground state) solution by using the concentration compactness principle (cf. \cite{Lions-84-I,Lions-84-II,DN-86,R-92}). But if \eqref{cond-V-1} does not hold, problem \eqref{eq-u} may not have a least energy solution and solutions have to be seeked for at higher energy levels.  Results in this direction are contained in \cite{BLi-90,BL-97,Cao-90}, where  a positive solution has been found by  variational methods 
under a suitable decay condition on $V$ at infinity.

\medskip
The structure of the solution set $\mathcal{S}_V$ may be quite rich and interesting.  Let us consider for instance the semi-classical limit case:
\begin{align}\label{eq-u-h}
\left\{
\begin{array}{ll}
-\varepsilon^2\Delta u+ W(x) u-u^p=0\ \text{in}\ \mathbb{R}^N,
\vspace{1mm}\\
u>0\ \text{in}\ \mathbb{R}^N,\ u\in H^1(\mathbb{R}^N),
\end{array}\right.
\end{align}
where $\varepsilon>0$ is a small parameter. Naturally, problem~\eqref{eq-u-h} is equivalent to problem~\eqref{eq-u} for $V(x)= W(\varepsilon x)$. It is known that as $\varepsilon$ goes to zero, highly concentrated solutions near critical points of the potential $W$  can be found, see  \cite{ABC-97,CNY-98,CNY-99} \cite{dPF-96}-\cite{dPF-02}, \cite{FW-86,KW-00,Oh-90,Wang-93},  or near higher dimensional stationary sets of other auxiliary potentials \cite{AMN-03,dPKW-07,MMM-09,WWY-11}. The number of solutions of \eqref{eq-u-h} may depend on the number or type of the critical points of $\widetilde{V}(x)$.
It is rather difficult task to understand the structure of $\mathcal{S}_V$ for an arbitrary potential $V$.
For instance, a conspicuously  unanswered question is whether or not  $\mathcal{S}_V\neq\emptyset$ for any potential $V$ satisfying \eqref{V-infinity}.

\medskip
Summing up, the above-mentioned work concern the existence of positive solutions, i.e., $\mathcal{S}_V\neq\emptyset$.  There is less work on the multiplicity of positive solutions of \eqref{eq-u}, namely on estimating $\#(\mathcal{S}_V)$. A seminal  result in this direction was given in  Coti-Zelati and Rabinowitz \cite{CR-92} where $V(x)$ is spatially periodic. In that situation they prove the existence of infinitely many positive solutions, distinct up to periodic translations, via variational methods.

\medskip

Recently, by assuming that $V=V(|x|)$ is radially symmetric, the second author and Yan~\cite{WY-10} proved that problem \eqref{eq-u} has infinitely many positive non-radial solutions if there are constants $V_\infty>0$, $a>0$, $m>1$, and $\sigma>0$, such that
\begin{align}\label{cond-wy-r}
V(r)=V_\infty+\frac{a}{r^m}+O\Big(\frac{1}{r^{m+\sigma}}\Big),\ \text{as}\ r\rightarrow+\infty.
\end{align}

\medskip
An alternative proof through min-max methods was given by Devillanova and Solimini \cite{DS}.

\medskip
The proof in \cite{WY-10} uses in essential way the radial symmetry of the potential $V$. On the other hand, it is conjectured
there that the result should remain true when the symmetry requirement is lifted:

\begin{conjecture}
Problem \eqref{eq-u} has infinitely many positive solutions if
there are constants $V_\infty>0$, $a>0$, $m>1$, and $\sigma>0$, such that the potential $V(x)$ satisfies
\begin{equation}\label{cond-wy}
V(x)=V_\infty+\frac{a}{|x|^m}+O\Big(\frac{1}{|x|^{m+\sigma}}\Big),\ \text{as}\ |x|\rightarrow+\infty.
\end{equation}
\end{conjecture}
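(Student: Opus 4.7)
The plan is to construct, for each large integer $k$, a positive $k$-bump solution whose bumps lie near the vertices of a regular $k$-gon of radius $r_k$, with $r_k\to\infty$ as $k\to\infty$. Let $U$ denote the unique positive radial ground state of $-\Delta U+V_\infty U=U^p$ on $\mathbb{R}^2$, which satisfies $U(x)\sim c|x|^{-1/2}e^{-\sqrt{V_\infty}|x|}$. Writing $\xi_j^0=r_k(\cos\frac{2\pi j}{k},\sin\frac{2\pi j}{k})$ and allowing small perturbations $\xi_j=\xi_j^0+\eta_j$, take as approximate solution $W_\xi=\sum_{j=1}^k U(\cdot-\xi_j)$ and seek a true solution of the form $u=W_\xi+\phi$. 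The standard Lyapunov--Schmidt step projects off the $2k$-dimensional approximate kernel spanned by the translation modes $\partial_{x_i}U(\cdot-\xi_j)$ and solves the projected equation for $\phi=\phi(\xi)$ by contraction in a weighted $L^\infty$ norm tuned to the exponential decay of $U$; this part is a routine adaptation of the argument used in the radial case.

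The reduced problem is $\nabla_\xi F(\xi)=0$ with $F(\xi)=\mathcal{E}(W_\xi+\phi(\xi))$. An energy expansion gives
\begin{equation*}
F(\xi)=k\,\mathcal{E}(U)+\sum_{j=1}^k\bigl[\,\Psi_V(\xi_j)-\Upsilon(|\xi_{j+1}-\xi_j|)\bigr]+\text{(smaller)},
\end{equation*}
where $\Psi_V(\xi)\simeq\tfrac{a}{2}|\xi|^{-m}\int U^2$ encodes the polynomial potential cost and $\Upsilon(s)\simeq c_1 s^{-1/2}e^{-\sqrt{V_\infty}s}$ the exponentially small inter-bump interaction. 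In the symmetric setting of \cite{WY-10}, $F$ collapses to a function of one variable $r_k$, whose optimal value (of order $k\log k$) is fixed by balancing the polynomial $\Psi_V$ against the exponential $\Upsilon$. Without radial symmetry, $F$ depends genuinely on all $2k$ perturbation parameters $\eta=(\eta_j)_{j=1}^k$. The \emph{intermediate} Lyapunov--Schmidt idea is to split the reduction itself into two layers: retain one scalar variational parameter $r_k$, and solve the $(2k-1)$-dimensional system for $\eta$ by a \emph{second} contraction, treating $\eta$ as an infinite-dimensional-style unknown rather than as a vector of critical-point variables. The linearization of the $\eta$-equation is, to leading order, a discrete Jacobi-type operator on $\mathbb{Z}/k\mathbb{Z}$ arising from nearest-neighbor couplings in $\Upsilon$ plus a small diagonal piece from $\Psi_V$. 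This operator is positive definite with explicit spectrum, which gives an inverse controlled in a suitable weighted norm; a contraction then produces $\eta=\eta(r_k)$, and the remaining scalar equation for $r_k$ is solved as in the symmetric case, yielding a distinct solution for each large $k$.

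The hard part is the uniform-in-$k$ control of this intermediate step. The tridiagonal coupling strengths are of order $e^{-\sqrt{V_\infty}\,2\pi r_k/k}$, matching the symmetric-case coupling, while the right-hand side of the $\eta$-equation receives the non-symmetric remainder of $V$ as an inhomogeneity of polynomial size $O(r_k^{-m-\sigma})$ and angular variation. For the contraction to close, the inverse of the discrete Jacobi operator must be bounded with \emph{sharp off-diagonal decay} matching the exponential coupling scale, so that the polynomial inhomogeneity produces $\eta_j$ of size $O(r_k^{-\sigma})$ rather than being amplified by the exponentially small coupling. Proving this, together with matching nonlinear error estimates --- a delicate interplay between discrete-spectral analysis on $\mathbb{Z}/k\mathbb{Z}$ and the sharp asymptotics of $\Psi_V$ and $\Upsilon$ --- is where the non-symmetric problem departs substantially from the radial one, and is the technical heart of the proposed method.
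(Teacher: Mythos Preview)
Your overall framework matches the paper's ``intermediate Lyapunov--Schmidt reduction'': bumps near a regular $k$-gon, a first reduction to $\phi$, then a second reduction governed by a discrete Jacobi-type operator coming from nearest-neighbor interactions. However, two structural points are misidentified, and they matter.

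First, the linearized reduced operator is \emph{not} positive definite. In the paper's notation the relevant $2K\times 2K$ matrix $T$ (Lemma~\ref{lem-T-0}) has exactly one zero eigenvalue, $K-1$ negative eigenvalues, and $K$ positive eigenvalues. So you cannot invert it outright and run a contraction on all $2k-1$ remaining modes; you must work modulo its kernel.

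Second, and this is the crucial point, that kernel is generated by \emph{rotations}, not dilations: the null vector is $\textbf{q}_0=(0,\dots,0,1,\dots,1)$, i.e.\ a uniform tangential displacement of all bumps. The radius $R$ is already pinned down by the balancing condition~\eqref{eq-R}, so there is no residual scalar radial equation to solve ``as in the symmetric case.'' What the paper does instead is introduce a rotation angle $\alpha$ as an explicit parameter, solve the projected $\textbf{q}$-system for each $\alpha$ by contraction (this is where the inverse of $T$ on $\textbf{q}_0^\perp$ is used, with the $(\ln K)^2$ bound of Lemma~\ref{lem-T}), and then close the last equation $\gamma(\alpha)=0$ by a variational argument: the reduced energy $F(\alpha)=\mathcal{E}(U+\phi)$ is $2\pi$-periodic in $\alpha$, hence has a critical point. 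Your plan of keeping $r_k$ as the final variational parameter would leave the rotational zero mode unaccounted for, and the contraction on $\eta$ would not close.

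Finally, note that even the paper does not establish the conjecture in the generality you state: it requires $\min\{1,\tfrac{p-1}{2}\}m>2$ and $\sigma>2$ (condition~\eqref{cond-m}), precisely so that the error terms in $\Phi(\alpha,\textbf{q})$ are dominated by $R^{-m-2}T\textbf{q}$. Your sketch does not address where such restrictions arise or how they might be removed.
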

\noindent Results in this direction with non-symmetric potentials, as far as we know, there are only perturbative results (cf. \cite{CDS-11,AW-12}). For instance, if $V(x)$ tends to $V_\infty$ from above with a suitable rate:
\begin{equation}
\label{newcond}
V(x) \geq V_\infty>0,\ \lim_{|x|\rightarrow\infty}\big(V(x)-V_\infty\big)e^{\bar{\eta}|x|}=+\infty,\ \text{for some}\ \bar{\eta}\in(0,\sqrt{V_\infty}),
\end{equation}
and $V$ satisfies a global condition:
\begin{align}\label{cond-CDS-0}
\sup_{x\in\mathbb{R}^N}\|V(x)-V_\infty\|_{L^{N/2}(B_1(x))}<\mathcal{V},
\end{align}
where $\mathcal{V}$ is a sufficiently small positive constant (with no explicit expression), Cerami, Passaseo and  Solimini  \cite{CDS-11} proved that problem \eqref{eq-u} has infinitely many positive solutions by purely variational methods. (In \cite{AW-12}, Ao and Wei gave a new proof of this result,   using localized energy method. The new techniques also allow them to deal with more general nonlinearity.)

\medskip

The main purpose of this paper is to prove the above conjecture under some additional assumptions.  In \cite{WY-10}, the  fact that $V$ is radially symmetry allows to build a $k$-bump solution for an arbitrary $k\ge 1$ with a $k$-dyadic symmetry, reducing the problem to just adjusting one parameter representing the location of a single bump along a given ray. A finite dimensional Lyapunov-Schmidt reduction method is used.

\medskip
When $V$ is non-symmetric, we cannot constrain the bump configuration to any symmetry class. We are thus forced to deal with a  large number of bumps and therefore with a huge number of parameters which need to be adjusted.
This poses a tremendous difficulty in the construction comparatively to \cite{WY-10}. In Lyapunov-Schmidt reduction for problems like \eqref{eq-u}
the situation of adjusting a finite number of points ({\em finite dimensional Lyapunov-Schmidt reduction method}), and that of adjusting a higher dimensional object such a geodesic in a suitable metric
as limiting concentration sets ({\em infinite dimensional Lyapunov-Schmidt reduction method}) have been treated.  In this work we develop an {\em intermediate  Lyapunov-Schmidt reduction method}, which consists
 of the finite dimensional procedure for large number of reduced equations, which in the limit become an ODE  system of limiting Jacobi-type operators (see (\ref{cont}) below).  Treating the discrete problem needs a method, technically delicate, which we interpret as an intermediate procedure between the finite and the infinite dimensional one (see for instance \cite{dkw-DG}, \cite{dPKW-07}, \cite{pacard-ritore} and references therein for the latter). The main difference between the intermediate and infinite dimensional reduction, is that in the latter procedure only the variations in the normal direction are needed so the usual Jacobi operator for a curve appears. In the former procedure we also need to take into account  variations in the tangential direction of points, which in the limit may be interpreted as a reparametrization of the curve. This seems to be a {new} procedure, with potentially many interesting applications.

\medskip
 Our main result removes the symmetry assumption on $V$  when $N=2$.

\medskip

\begin{theorem}\label{thm-0}
Let $N=2$. Suppose that $V(x)$ satisfies \eqref{(V0)} and \eqref{cond-wy} for some constants $V_\infty>0$, $a>0$, and
\begin{align}\label{cond-m}
\min\big\{1,\frac{p-1}{2}\big\}m>2,\ \sigma>2.
\end{align}

Then problem \eqref{eq-u} has infinitely many non-radial positive solutions, whose energy can be made arbitrarily large.

\end{theorem}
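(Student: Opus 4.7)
The plan is to construct, for each sufficiently large integer $k$, a solution of \eqref{eq-u} consisting of approximately $k$ positive bumps placed near a large circle in $\mathbb{R}^2$, via an intermediate Lyapunov--Schmidt reduction. Let $w$ denote the unique radial positive ground state of
\begin{equation*}
-\Delta w + V_\infty w = w^p\quad\text{in }\mathbb{R}^2,
\end{equation*}
which decays like $|x|^{-1/2} e^{-\sqrt{V_\infty}|x|}$ at infinity. The ansatz is
\begin{equation*}
u_*(x) \;=\; \sum_{j=1}^{k} w(x-\xi_j),\qquad \xi_j = r_j(\cos\theta_j,\sin\theta_j),
\end{equation*}
with $\theta_j \approx 2\pi j/k$ and $r_j \approx r_0$, where $r_0 \sim \frac{m}{\sqrt{V_\infty}}\log k$ is chosen so that the attractive force coming from $V(x)-V_\infty\sim a|x|^{-m}$ balances the exponentially small repulsive interaction between consecutive bumps at distance $\sim 2\pi r_0/k$. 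The first step is the classical part: writing $u = u_* + \phi$ with $\phi$ in an appropriate orthogonal complement to the kernel of the linearized operator at $u_*$, one solves a nonlinear projected problem for $\phi$ by contraction mapping, obtaining $\phi = \phi(\xi_1,\dots,\xi_k)$ small in a suitable weighted $L^\infty$-norm, provided the configuration $\{\xi_j\}$ stays in an admissible region (near-equidistributed points on a circle of radius $\sim r_0$).

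It then remains to solve the $2k$-dimensional \emph{reduced system} obtained by testing the equation against the elements of the approximate kernel $\{\partial_{r_j} w(\cdot-\xi_j),\;\partial_{\theta_j} w(\cdot-\xi_j)\}$. A careful expansion yields, schematically,
\begin{equation*}
\mathcal{F}_j^{\mathrm{rad}}(\xi) = c_1\bigl[e^{-|\xi_{j+1}-\xi_j|}+e^{-|\xi_j-\xi_{j-1}|}\bigr]\cdot(\text{unit vectors}) \;-\; \frac{c_2\, a}{r_j^{m+1}} \;+\; \text{l.o.t.},
\end{equation*}
and a similar tangential equation $\mathcal{F}_j^{\mathrm{tan}}(\xi)=0$. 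Rescaling $\theta_j = 2\pi j/k + k^{-1}\alpha_j$, $r_j = r_0 + \beta_j$, the leading-order linearization of the reduced system in the variables $(\alpha_j,\beta_j)$ takes the form of a discrete second-order operator
\begin{equation*}
L_k\begin{pmatrix}\alpha\\ \beta\end{pmatrix}_j \;=\; A\,\Delta_{\mathrm{disc}}\!\begin{pmatrix}\alpha\\ \beta\end{pmatrix}_j \;+\; B\begin{pmatrix}\alpha\\ \beta\end{pmatrix}_j,
\end{equation*}
with $\Delta_{\mathrm{disc}} x_j = x_{j+1} - 2x_j + x_{j-1}$ and constant $2\times 2$ matrices $A,B$ coming respectively from the second variation of the interaction energy and of the potential contribution.

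The heart of the proof---and the main technical obstacle---is to invert $L_k$ with bounds independent of $k$ in an appropriate discrete weighted norm. As $k\to\infty$, $L_k$ formally converges to a continuous Jacobi-type ODE system
\begin{equation}\label{cont}
A\, \ddot{\gamma}(s) + B\, \gamma(s) = h(s),\qquad s\in\mathbb{R}/2\pi\mathbb{Z},
\end{equation}
for a curve $\gamma=(\alpha,\beta):S^1\to\mathbb{R}^2$. A delicate point specific to the \emph{intermediate} setting is that, unlike the purely infinite-dimensional reductions along curves where only normal variations survive, here both $\alpha$ (tangential) and $\beta$ (normal) variations matter: the tangential variations correspond in the limit to reparametrizations of the limiting circle, so $L_k$ has almost-kernel elements associated with rotations of the whole configuration, and the spectrum of \eqref{cont} must be analyzed to ensure uniform invertibility modulo this symmetry. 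I would carry this out by: (i) computing the indicial equation of \eqref{cont} and verifying a non-degeneracy condition under the hypothesis $\min(1,(p-1)/2)m>2$, which ensures the potential term dominates the correct number of discrete Fourier modes; (ii) quotienting by the one-dimensional rotation kernel; (iii) transferring the continuous invertibility to $L_k$ by Fourier analysis on $\mathbb{Z}/k\mathbb{Z}$ with explicit comparison of discrete and continuous symbols, yielding $\|L_k^{-1}\| \le Ck^{2}$ in suitable norms.

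With the uniform invertibility of $L_k$ in hand, the full reduced system $\mathcal{F}(\xi)=0$ is solved by a Banach fixed-point argument: the nonlinear and higher-order corrections to $L_k$ (which include error terms of size $O(|x|^{-m-\sigma})$ from the remainder in \eqref{cond-wy}, the error $\phi$ from the outer reduction, and higher-order bump-interaction terms) are controlled by the condition $\sigma>2$ and $\min(1,(p-1)/2)m>2$; here the exponent $(p-1)/2$ enters through the size of $\phi^2$-type corrections. Producing a solution $\xi^{(k)}$ of the reduced system for each large $k$ then yields, via $u = u_*+\phi$, a genuine positive solution of \eqref{eq-u} with approximately $k$ bumps; the energies $\mathcal{E}(u^{(k)}) \approx k\,\mathcal{E}(w) \to \infty$, so these solutions are pairwise distinct and the family is infinite, proving Theorem \ref{thm-0}.
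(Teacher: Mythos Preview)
Your overall architecture matches the paper's intermediate Lyapunov--Schmidt reduction, but there are two genuine gaps.

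\textbf{Wrong scale for the radius.} You take $r_0\sim \frac{m}{\sqrt{V_\infty}}\log k$, which would give an adjacent-bump distance $2\pi r_0/k\sim(\log k)/k\to 0$; the bumps then overlap and the whole ansatz collapses. The correct balancing (normalizing $V_\infty=1$) forces $R\sim\frac{m}{2\pi}K\ln K$, so that the nearest-neighbour distance $d=2R\sin(\pi/K)\sim m\ln K$ diverges. This matters throughout: the size of the error $E$, the weighted-norm estimates on $\phi$, and the comparison of $R^{-m-2}$ against the higher-order terms in the reduced system all depend on getting $R\sim K\ln K$, not $\ln K$.

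\textbf{The rotation-kernel equation is not actually solved.} You correctly observe that the linearized reduced operator $L_k$ (the matrix $T$ in the paper) has a one-dimensional kernel coming from rotations of the configuration, and you propose to ``quotient by'' it. But quotienting only lets you solve the $2K-1$ equations orthogonal to the kernel; one scalar equation remains. If $V$ were radially symmetric this last equation would vanish by the exact invariance---but here $V$ is \emph{not} radial, so it does not. The paper handles this by introducing an extra parameter $\alpha\in\mathbb{R}$ rotating the entire reference configuration $Q_j^0=Q_j^0(\alpha)$, solving the projected reduced problem to obtain $(\mathbf{q}(\alpha),\gamma(\alpha))$ with $\gamma(\alpha)$ the residual Lagrange multiplier in the kernel direction, and then proving via a variational reduction that critical points of the $2\pi$-periodic function $F(\alpha)=\mathcal{E}(U+\phi)$ force $\gamma(\alpha)=0$. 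Periodicity guarantees at least two such critical points. Your fixed-point argument alone cannot close this loop; without this (or an equivalent device) the proof is incomplete.

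A smaller remark: the role of the hypothesis $\min\{1,\frac{p-1}{2}\}m>2$, $\sigma>2$ is not to control ``the correct number of Fourier modes'' of the Jacobi operator, but simply to make the remainder $\Phi(\alpha,\mathbf{q})$ in the reduced system of order strictly smaller than the leading $R^{-m-2}T\mathbf{q}$, so that the secondary fixed-point argument contracts; and the paper's inverse bound on $T$ is $\|\mathbf{q}\|_*\le C(\ln K)^2\|\mathbf{b}\|_\infty$, not $Ck^2$.
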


\medskip

  If \eqref{cond-wy} holds in the $C^1$ sense, then ``$\sigma>2$" in \eqref{cond-m} can be improved to be ``$\sigma>1$". The condition on $p$ can be further relaxed if we assume more regularity of the condition \eqref{cond-wy} or if $p$ is an integer.

The results in \cite{CDS-11,AW-12} make us think that condition \eqref{cond-wy} could be improved. In fact we believe that the optimal condition should be (\ref{newcond}). We stress that our result does not require a global perturbative assumption such as \eqref{cond-CDS-0} on $V$.
In addition, it is worth pointing out that the results on the existence of positive solutions in \cite{BLi-90,BL-97,Cao-90} do not include the polynomial decay case \eqref{cond-wy}.

Finally, we remark that for $N\geq3$, Theorem \ref{thm-0} holds if  we assume the following additional symmetry assumption on $V$: after suitably rotating the coordinate system,
\begin{align}\label{cond-even-1}
V(x)=V(x',x'')=V(x',-x''),
\end{align}
where $x=(x',x'')\in\mathbb{R}^2\times\mathbb{R}^{N-2}$. An open question is whether or not the same result holds when $N \geq 3$  with no extra assumption made.

\medskip

Throughout the paper, we shall use  the following notation and conventions:

\vspace{1mm}
\begin{itemize}
\item For quantities $A_K$ and $B_K$, we write $A_K\sim B_K$ to denote that there exists a positive constant $C$ such that $1/C\leq A_K/B_K\leq C$ for $K$ sufficiently large; $A_K=O(B_K)$ means that $|A_K/B_K|$ are uniformly bounded as $K$ tends to infinity; $A_K=o(B_K)$ denotes that $|A_K/B_K|\rightarrow0$ as $K\rightarrow\infty$.

\item For simplicity, the letter $C$ denotes various generic constant which is independent of $K$. It is allowed to vary from line to line, and also within the same formula.

\item We will use the same $|y|=\|y\|_2$ for the Euclidean norm in various Euclidean spaces $\mathbb{R}^N$ when no confusion can arise and we always denote the inner product of $a$ and $b$ in $\mathbb{R}^N$ by $a\cdot b$.

\item For the index $j\in\{1,2,\dots,K\}$, we shall always use the convention that $j-1=K$ if $j=1$ and $j+1=1$ if $j=K$.

\item The cardinality of a finite set $E$ will be denoted by $\#E$; The Lebesgue measure of a set $E\subset\mathbb{R}^N$ will be denoted by $|E|$.

\item The transpose of a matrix $A$ will be denoted by $A^T$.

\item For each function $w(x)$ defined in $\mathbb{R}^N$, if $w$ is radially symmetric, then there is a real function $\widetilde{w}(r)$ such that $w(x)=\widetilde{w}(|x|)$. With slight abuse of notation, we will simply write $w(r)$ instead of $\widetilde{w}(r)$.
\end{itemize}

In the next section, we will describe the procedure of our construction and give the main ideas of each step.


\medskip

\noindent {\bf Acknowledgments:}
The authors are grateful to Frank Pacard for sharing his ideas and for useful discussions. Manuel del Pino is supported by
 Fondecyt grant 110181 and Fondo Basal CMM.
J. Wei is supported by a NSERC from Canada. The research of W. Yao is supported by Fondecyt Grant 3130543.


\section{Description of the construction}

We will prove the slightly more general version of the Theorem, for $N\ge 3$
where we assume even symmetry in $N-2$ the remaining variables in the sense that
\begin{align}\label{cond-even-11}
V(x)=V(x',x'')=V(x',-x''),
\end{align}
where $x=(x',x'')\in\mathbb{R}^2\times\mathbb{R}^{N-2}$. We assume this henceforth.

\medskip
We shall briefly describe the solutions to be constructed later and will give the main ideas in the procedure of the construction. In particular, we shall introduce the {\em intermediate reduction method}, which we believe  can be very useful in other contexts.

 Firstly, without loss of generality, we can assume that $V_\infty=1$ by suitable scaling. As developed in \cite{WY-10}, we will use the loss of compactness to build up solutions. More precisely, we will construct solutions with large number of spikes whose inter-distances and distances from the origin are sufficiently large.

By the asymptotic behaviour of $V$ at infinity, the basic building block is the ground state (radial) solution $w$ of the limit problem at infinity:
\begin{align}\label{eq-w}
\left\{
\begin{array}{ll}
-\Delta w+w-w^p=0,\  w>0 \ \text{in}\ \mathbb{R}^N,
\vspace{1mm}\\
w=w(|x|), \ \ \ w\in H^1(\mathbb{R}^N).
\end{array}\right.
\end{align}
The solutions we construct will be small perturbations of the sum of copies of $w$, centered at some carefully chosen points on $\mathbb{R}^2\times\{0\}\subset\mathbb{R}^N$, where $0$ is the zero vector in $\mathbb{R}^{N-2}$.

Let $K\in\mathbb{N}_+$ be the number of spikes, whose locations are given by $Q_j\in\mathbb{R}^N$, $j=1,\dots,K$. We define
\begin{equation}\label{def-U}
w_{Q_j}(x)=w(x-Q_j)\ \text{and}\ U(x)=\sum_{j=1}^Kw_{Q_j}(x),\ \text{for}\ x\in\mathbb{R}^N.
\end{equation}
A natural and central question is how to choose $Q_j$'s such that a small perturbation of $U$ will be a genuine solution.

Assuming that
\[
\inf_{1\leq j\leq K}|Q_j|\rightarrow\infty\ \text{and}\
\inf_{j\neq l}|Q_j-Q_l|\rightarrow\infty,
\]
by the asymptotic behaviour of $V$ at infinity and the property of $w$, one can get (at least formally) the following energy expansion
\begin{align}\label{energy-w}
\mathcal{E}(U)=\underbrace{KI_0+a_0\sum_{j=1}^K|Q_j|^{-m}-\frac{1}{2}\gamma_0\sum_{j\neq l}w(|Q_j-Q_l|)}_{J(Q_1,\dots,Q_K)}+\text{other terms},
\end{align}
where $I_0$, $a_0$ and $\gamma_0$ are  positive constants. Here $\mathcal{E} (U)$ is the energy functional defined at (\ref{energy}) and we denote the leading order expansion as $J(Q_1,..., Q_K)$.

Observe that for any rotation $R_\theta$ around the origin in $\mathbb{R}^N$, there holds
\begin{equation*}
J(R_\theta Q_1,\dots,R_\theta Q_K)=J(Q_1,\dots,Q_K).
\end{equation*}
Hence any critical point of $J(Q_1,\dots,Q_K)$ is {\em degenerate}. Therefore, except  in the symmetric class, it is not easy to find critical points of small perturbations of $J(Q_1,\dots,Q_K)$. This means that it is not easy to apply the localized energy method directly. However, this observation gives us some enlightenment in the non-symmetric setting. Actually, when we restrict $Q_j$'s on a plane, this suggests us to introduce one more parameter to deal with the degeneracy due to rotations as we will see in Section 5.

Under the condition \eqref{cond-even-1}, there is no essential difference between $N=2$ and $N\geq3$. Hence from now on we will restrict $Q_j$'s on the plane $\mathbb{R}^2\times\{0\}\subset\mathbb{R}^N$. To describe further the configuration space of $Q_j$'s, we define
\begin{equation*}
Q_j^0=\left(R\cos\theta_j,R\sin\theta_j,0\right)\in\mathbb{R}^2\times\{0\},\ \text{for}\ j=1,\dots,K,
\end{equation*}
where
\begin{align*}
\theta_j=\alpha+(j-1)\frac{2\pi}{K}\in\mathbb{R}.
\end{align*}
Here $\alpha$ is the parameter representing  the degeneracy due to rotations, and $R$ is a positive constant to be determined later. Observe that each point $Q_j^0$ depends on $\alpha$. Thus we write $Q_j^0=Q_j^0(\alpha)$. When $\alpha=0$, the $Q_j^0$'s are the points used in \cite{WY-10}. If $V(x)$ is radially symmetric, it is obvious that the parameter $\alpha$ plays no role in the construction in \cite{WY-10}. But it is very important in our construction as we will see in Section 6.

For the constant $R$, we introduce the so-called balancing condition:
\begin{align}\label{eq-R}
a_0mR^{-m-1}=2\sin\frac{\pi}{K}\Psi\big(2R\sin\frac{\pi}{K}\big),
\end{align}
where $a_0=\frac{a}{2}\int_{\mathbb{R}^N}w^2\,dx>0$,
and $\Psi$ is the interaction function
defined by
\begin{equation}\label{def-Psi}
\Psi(s)=-\int_{\mathbb{R}^N}w(x-s\vec{\textbf{e}})\,\text{div}\big(w^p(x)\vec{\textbf{e}}\big)dx.
\end{equation}
Here $\vec{\textbf{e}}$ can be any unit vector in $\mathbb{R}^N$ (cf. \cite{M-09,MPW-12}). The balancing condition \eqref{eq-R} can either be understood as a consequence of a conservation law or can be seen as a condition such that the approximation $U$ is very close to a genuine solution
(cf. Appendix in \cite{MPW-12}). Assuming that
\begin{equation*}
d=2R\sin\frac{\pi}{K}\rightarrow+\infty,\ \text{as}\ K\rightarrow+\infty,
\end{equation*}
we will see that (cf. Lemma~\ref{lem-d-R})
\begin{align*}
|Q_j^0|=R\sim\frac{m}{2\pi}K\ln K,\ \text{and}\ \min_{j\neq l}\{|Q_j^0-Q_l^0|\}=d\sim m\ln K.
\end{align*}

Next we define a small neighbourhood of $\textbf{Q}^0=(Q_1^0,\dots,Q_K^0)$ on $(\mathbb{R}^2\times\{0\})^{K}$ in a suitable norm to be made precise and introduce another parameter. Let $f_j,\,g_j\in\mathbb{R}$, $j=1,\dots,K$, we define
\begin{align}\label{eq-Q}
Q_j=Q_j^0+f_j\vec{n}_j+g_j\vec{t}_j=(R+f_j)\vec{n}_j+g_j\vec{t}_j,
\end{align}
where
\begin{align*}
\vec{n}_j=\left(\cos\theta_j,\sin\theta_j,0\right),\ \text{and}\
\vec{t}_j=\left(-\sin\theta_j,\cos\theta_j,0\right).
\end{align*}

Keep in mind that $f_j$ and $g_j$  measure the displacement in the normal and tangential directions respectively.

Writing  $Q_j=Q_j(\alpha)$, $\vec{n}_j=\vec{n}_j(\alpha)$ and $\vec{t}_j=\vec{t}_j(\alpha)$, we note the following  trivial but  important fact:
\begin{equation}\label{Q-1}
Q_j(\alpha+2\pi)=Q_j(\alpha),\ \forall\,\alpha\in\mathbb{R},\ \text{and}\ \forall\,j=1,\dots,K.
\end{equation}

We can now introduce another parameter $\textbf{q}$ and define a suitable norm. Denote
\begin{align*}
\textbf{q}=(f_1,\cdots,f_K,g_1,\cdots,g_K)^T\in\mathbb{R}^{2K},
\end{align*}
that is, $q_j=f_j$ and $q_{K+j}=g_j$ for $j=1,\dots,K$. We define
\begin{align*}
\dot{\textbf{q}}=(\dot{f}_1,\cdots,\dot{f}_K,\dot{g}_1,\cdots,\dot{g}_K)^T,\ \text{and}\
\ddot{\textbf{q}}=(\ddot{f}_1,\cdots,\ddot{f}_K,\ddot{g}_1,\cdots,\ddot{g}_K)^T,
\end{align*}
where for $j=1,\dots,K$,
\begin{align*}
&\dot{f}_j=(f_{j+1}-f_j)\frac{K}{2\pi},\
\ddot{f}_j=(f_{j+1}-2f_j+f_{j-1})\frac{K^2}{4\pi^2},\\
&\dot{g}_j=(g_{j+1}-g_j)\frac{K}{2\pi},\
\ddot{g}_j=(g_{j+1}-2g_j+g_{j-1})\frac{K^2}{4\pi^2},\\
&f_{K+1}=f_1,\ f_0=f_K,\ g_{K+1}=g_1,\ g_0=g_K.
\end{align*}
Observe that if $f_j=f(\theta_j)$ for some $2\pi$ periodic smooth function $f$, then $\dot{f}_j$ is the forward difference of $f$ and $\ddot{f}_j$ is the 2nd order central difference of $f$.

\medskip

With these notation, we can define the configuration space of $Q_j$'s by
\begin{align*}
\Lambda_K=\Big\{(Q_1,\dots,Q_K)\in(\mathbb{R}^2\times\{0\})^K\,\big|\,Q_j\ \text{is defined by}\ \eqref{eq-Q}\ \text{and}\ \|\textbf{q}\|_*\leq1\Big\},
\end{align*}
where $\|\textbf{q}\|_*=\|\textbf{q}\|_\infty+\|\dot{\textbf{q}}\|_\infty+\|\ddot{\textbf{q}}\|_\infty$ is a norm on $\mathbb{R}^{2K}$. In the following, we assume that $Q_j$ is defined by \eqref{eq-Q}, the parameter $\alpha\in\mathbb{R}$ and the parameter $\textbf{q}$ satisfy
\begin{align}\label{cond-q}
\|\textbf{q}\|_*=\|\textbf{q}\|_\infty+\|\dot{\textbf{q}}\|_\infty+\|\ddot{\textbf{q}}\|_\infty\leq 1.
\end{align}
For any $(Q_1,\dots,Q_K)\in\Lambda_K$, an easy computation shows that for $j=1,\dots,K$,
\begin{align*}
|Q_j|=R+f_j+O(R^{-1}),
\end{align*}
and
\begin{align*}
|Q_{j+1}-Q_j|=d+2(f_j+\dot{g}_j)\frac{\pi}{K}+O(K^{-2}).
\end{align*}
Define $\rho=\min\limits_{j\neq l}\{|Q_j-Q_l|\}$, it follows that
\begin{align}\label{eq-rho}
\rho=d+O(K^{-1}),\ \text{and}\ \min_{j=1,\dots,K}\{|Q_j|\}=R+O(1).
\end{align}

We will prove Theorem \ref{thm-0} by showing the following result.

\begin{theorem}\label{thm-2}
Under the assumption of Theorem \ref{thm-0}, there is a positive integer $K_0$ such that: for all integer $K\geq K_0$, there exist $\alpha\in[0,2\pi)$ and $(Q_1,\dots,Q_K)\in\Lambda_K$ such that problem \eqref{eq-u} has {\bf two  solutions} of the form
\begin{equation}\label{u_K}
u(x)=\sum_{j=1}^Kw(x-Q_j)+\phi(x),
\end{equation}
where
$\|\phi\|_{H^1(\mathbb{R}^N)}+\|\phi\|_{L^\infty(\mathbb{R}^N)}\rightarrow0$ as $K\rightarrow+\infty$.
Moreover, the energy of $u$ is given by
\begin{align}
\mathcal{E}(u)=K\Big(\frac{1}{2}-\frac{1}{p+1}\Big)\int_{\mathbb{R}^N}w^{p+1}\,dx+o(1).
\end{align}
\end{theorem}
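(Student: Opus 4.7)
The plan is to carry out the intermediate Lyapunov--Schmidt reduction sketched in the introduction. First I would write $u=U+\phi$ with $U=\sum_{j=1}^K w_{Q_j}$ and $Q_j=Q_j(\alpha,\mathbf{q})$ as in \eqref{eq-Q}, and substitute into \eqref{eq-u} to obtain the equation $L[\phi]=E+N[\phi]$, where $E=-\Delta U+VU-U^p$ is the error, $L\phi=-\Delta\phi+V\phi-pU^{p-1}\phi$ is the linearization at $U$, and $N[\phi]$ collects the quadratic and higher terms in $\phi$. The approximate kernel of $L$ near $U$ is spanned by the $2K$ translations $Z_j^n=\vec{n}_j\cdot\nabla w_{Q_j}$ and $Z_j^t=\vec{t}_j\cdot\nabla w_{Q_j}$, $j=1,\dots,K$, one pair per bump, matching the $2K$ location parameters in $\mathbf{q}$. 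I would solve the projected problem $L[\phi]=E+N[\phi]+\sum_j(c_j^n Z_j^n+c_j^t Z_j^t)$ subject to the orthogonality $\phi\perp\mathrm{span}\{Z_j^n,Z_j^t\}$ by contraction in a weighted $L^\infty$-norm of the form $\|\phi\|_{**}=\sup_x\big(\sum_{j=1}^K e^{-\eta|x-Q_j|}\big)^{-1}|\phi(x)|$ for some $\eta\in(0,1)$. The prerequisite is a uniform-in-$K$ invertibility of $L$ on the orthogonal complement, established by a compactness--contradiction argument using the non-degeneracy of $w$ (the kernel of $-\Delta+1-pw^{p-1}$ being spanned by the $\partial_i w$). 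The estimate for $\|E\|_{**}$ has contributions of order $R^{-m}\sim(K\ln K)^{-m}$ from the potential perturbation $V-V_\infty=a|x|^{-m}+O(|x|^{-m-\sigma})$ and of order $w(d)\sim K^{-m}$ (up to logs) from the interaction tails; the hypothesis \eqref{cond-m} is tuned so that both sources, once summed over $j$, are small enough to close the contraction and later the reduced problem.

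\medskip

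With $\phi=\phi(\alpha,\mathbf{q})$ constructed, the reduced problem is the vanishing of the Lagrange multipliers $c_j^n(\alpha,\mathbf{q})=c_j^t(\alpha,\mathbf{q})=0$ for $j=1,\dots,K$. Using the energy expansion \eqref{energy-w}, the leading order of these $2K$ equations is obtained by differentiating $J$ in $Q_j$ along $\vec{n}_j$ and $\vec{t}_j$; by the balancing condition \eqref{eq-R} both vanish at $\mathbf{q}=\mathbf{0}$ up to controllable error. A careful linearization in $\mathbf{q}$ then yields a discrete system of the form
\begin{equation*}
\ddot{\mathbf{q}}+A\mathbf{q}=\mathbf{h}(\alpha,\mathbf{q}),
\end{equation*}
where $A$ is a bounded symmetric $2K\times 2K$ matrix coupling only nearest neighbours and $\mathbf{h}$ is of order $o(1)$ in the $*$-norm. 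This is the advertised discrete Jacobi-type operator: in the continuous limit $K\to\infty$ with $\theta_j=\alpha+2\pi(j-1)/K$ becoming $\theta\in\mathbb{R}/2\pi\mathbb{Z}$, it converges to a $2\times 2$ system of linear ODEs on the circle, and its one-dimensional kernel is spanned by the vector corresponding to simultaneous tangential shifts $(f_j\equiv 0,\,g_j\equiv\mathrm{const})$, representing an infinitesimal rotation --- precisely the degeneracy of $J$ noted in Section 2.

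\medskip

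I would then solve the $\mathbf{q}$-system modulo this rotational kernel by another fixed-point argument in $\{\|\mathbf{q}\|_*\leq 1\}$; the $*$-norm, involving $\|\dot{\mathbf{q}}\|_\infty$ and $\|\ddot{\mathbf{q}}\|_\infty$ as in \eqref{cond-q}, is exactly the topology in which the inverse of $\mathbf{q}\mapsto\ddot{\mathbf{q}}+A\mathbf{q}$ admits $K$-independent bounds. The output is a smooth map $\alpha\mapsto\mathbf{q}(\alpha)$ of size $o(1)$ as $K\to\infty$. The single remaining scalar equation, namely the projection of the reduced system onto the rotational kernel, is equivalent to $\partial_\alpha\widetilde{\mathcal E}(\alpha)=0$ for the reduced energy $\widetilde{\mathcal E}(\alpha):=\mathcal E(U_{\alpha,\mathbf{q}(\alpha)}+\phi_{\alpha,\mathbf{q}(\alpha)})$. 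By \eqref{Q-1}, $\widetilde{\mathcal E}$ is smooth and $2\pi$-periodic on $\mathbb{R}$, so it attains its maximum and minimum at (at least) two critical points $\alpha_*,\alpha^*\in[0,2\pi)$, producing the two distinct solutions asserted; the asymptotic energy formula follows from \eqref{energy-w} combined with the $o(1)$ smallness of $\phi$, $\mathbf{q}$, $|Q_j|^{-m}$ and $w(d)$. The main obstacle I anticipate is maintaining estimates uniformly in $K$, and in particular the invertibility of the discrete Jacobi operator modulo its rotational kernel in the $*$-norm: this is the genuinely new ingredient of the intermediate reduction and what distinguishes the present construction from both the finite-dimensional method of \cite{WY-10} and the infinite-dimensional approaches in \cite{dkw-DG,dPKW-07,pacard-ritore}.
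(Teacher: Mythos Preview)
Your proposal is correct and follows essentially the same approach as the paper: the three-step intermediate reduction (projected problem in the weighted $\|\cdot\|_{**}$-norm via contraction and non-degeneracy of $w$; secondary reduction solving for $\mathbf{q}$ modulo the rotational kernel $\mathbf{q}_0$ using invertibility of the discrete Jacobi-type operator; variational reduction via $2\pi$-periodicity of the reduced energy in $\alpha$) matches the paper's Sections~4--6 almost exactly. Two minor technical corrections: the linearized reduced operator is not simply $\ddot{\mathbf{q}}+A\mathbf{q}$ but the coupled block-circulant matrix $T$ of \eqref{eq-T} mixing first and second differences of the normal and tangential components, and its inverse in the $*$-norm carries a $(\ln K)^2$ factor rather than a truly $K$-independent bound---still small enough to close the fixed-point argument under \eqref{cond-m}.
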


\begin{remark}
It is worth pointing out that the solutions constructed in this paper are different from those found in \cite{CDS-11,AW-12}. The reason is simply that the inter-distances and distances from the origin of the spikes of the solutions given in \eqref{u_K} tend to infinity uniformly as $K$ goes to infinity, but those of the solutions found in \cite{CDS-11,AW-12} do not.
\end{remark}

\begin{remark} The fact that we can find at least {\bf two} solutions of the form (\ref{u_K}) is nontrivial. This is due to the fact that we need to choose the {\bf first} starting point $Q_1^0$. It turns out that there are at least two such points to choose (see Section 6).
\end{remark}

\begin{remark} As $ K\to +\infty$, $(f_j, g_j)$ is the discretization of two second order ordinary differential equations (\ref{cont}).
\end{remark}

To prove Theorem \ref{thm-2}, it is sufficient to show that for $K$ sufficiently large there are parameters $\alpha$ and $\textbf{q}$ such that $U+\phi$ is a genuine solution for a small perturbation $\phi$. To achieve this goal, we will adopt the techniques in the singularly perturbed problem. Unlike problem \eqref{eq-u-h}, there is no apparent parameter in \eqref{eq-u}. As stated in Theorem \ref{thm-2}, we use the number of the spikes as the $\varepsilon$ type parameter. This idea comes directly from \cite{WY-10} and goes back at least as far as to \cite{LNW-07}.

Before we sketch the procedure of our proof, we briefly introduce the abstract set-up of the Lyapunov-Schmidt reduction (although it is always used in a framework that occurs often in bifurcation theory).

Let $X,Y$ be Banach spaces and $S(u)$ is a $C^1$ map from $X$ to $Y$. To study the equation $S(u)=0$, a natural way is to find approximations first and then to look for genuine solutions as (small) perturbations of approximations.
Assume that $U_\lambda$ are the approximations, where $\lambda\in\Lambda$ is the parameter (we think of $\Lambda$ as the configuration space). Writing $u=U_\lambda+\phi$, then solving $S(u)=0$ amounts to solve
\begin{equation}\label{eq-L-S}
L[\phi]+E+N(\phi)=0,
\end{equation}
where
\[
L[\phi]=S'(U_\lambda)[\phi],\ E=S(U_\lambda),\ \text{and}\ N(\phi)=S(U_\lambda+\phi)-S(U_\lambda)-S'(U_\lambda)[\phi].
\]
Here $S'(U_\lambda)$ is the Fr\'{e}chet derivative of $S$ at $U_\lambda$, $E$ denotes the error of approximation, and $N(\phi)$ denotes the nonlinear term. In order to solve \eqref{eq-L-S}, we try to invert the linear operator $L$ so that we can rephrase the problem as a fixed point problem. That is, when $L$ has a uniformly bounded inverse in a suitable space, one can rewrite the equation \eqref{eq-L-S} as
\[
\phi=-L^{-1}[E+N(\phi)]=\mathcal{A}(\phi).
\]
What is left is to use fixed point theorems such as contraction mapping theorem.

The Lyapunov-Schmidt reduction deals with the situation when the linear operator $L$ is Fredholm and its eigenfunction space associated to small eigenvalues has finite dimensional.
Assuming that $\{\mathcal{Z}_1,\dots,\mathcal{Z}_n\}$ is a basis of the eigenfunction space associated to small eigenvalues of $L$, we can divide the procedure of solving \eqref{eq-L-S} into two steps:
\begin{enumerate}[(i)]
\item solving the projected problem for any $\lambda\in\Lambda$,
\begin{equation*}
\begin{cases}
L[\phi]+E+N(\phi)=\sum\limits_{j=1}^nc_j\mathcal{Z}_j,\\
\langle\phi,\mathcal{Z}_{j}\rangle=0,\ \forall\,j=1,\dots,n,
\end{cases}
\end{equation*}
where $c_j$ may be constant or function depending on the form of $\langle\phi,\mathcal{Z}_{j}\rangle$.
\item solving the reduced problem
\[
c_j(\lambda)=0,\ \forall\,j=1,\dots,n,
\]
by adjusting $\lambda$.
\end{enumerate}

Let us now turn to our problem \eqref{eq-u}. In this case,
\begin{align*}
S(u)&=-\Delta u+V(x) u-u_+^p,\\
L[\phi]&=-\Delta\phi+V(x)\phi-pU^{p-1}\phi,\\
E&=-\Delta U+V(x)U-U^p,\\
N(\phi)&=-(U+\phi)_+^p+U^p+pU^{p-1}\phi.
\end{align*}
Observe that all of these quantities depend implicitly on $\alpha$ and $\textbf{q}$ even though this is not apparent in the notation.

By the Lyapunov-Schmidt reduction, the procedure of construction is made up of several steps which we explain next and postpone the proofs of major facts in later sections.

\medskip

\noindent
\textbf{Step 1: Solving the projected problem.}

\vspace{1mm}

Let $\alpha\in\mathbb{R}$ and $\textbf{q}$ satisfy \eqref{cond-q}. We look for a function $\phi$ and some multiplier $\widehat{\beta}\in\mathbb{R}^{2K}$ such that
\begin{equation}\label{eq-u-1-a}
\begin{cases}
L[\phi]+E+N(\phi)=\widehat{\beta}\cdot\frac{\partial U}{\partial\textbf{q}},
\vspace{1mm}\\
\int_{\mathbb{R}^N}\phi\,\mathcal{Z}_{Q_j}\,dx=0,\ \forall\,j=1,\dots,K,
\end{cases}
\end{equation}
where the vector field $\mathcal{Z}_{Q_j}$ is defined by
\begin{equation}\label{eq-Z}
\mathcal{Z}_{Q_j}(x)
=\nabla w(x-Q_j).
\end{equation}
By direct computation, we have
\begin{align*}
\frac{\partial U}{\partial\textbf{q}}
&=-\big(\mathcal{Z}_{Q_1}\cdot\vec{n}_1,\cdots,\mathcal{Z}_{Q_K}\cdot\vec{n}_K,\mathcal{Z}_{Q_1}\cdot\vec{t}_1,\cdots,\mathcal{Z}_{Q_K}\cdot\vec{t}_K\big)^T.
\end{align*}

This is the first step in the Lyapunov-Schmidt reduction. It is done in Section 4 through some a priori estimates and contraction mapping theorem. A required element in this step is the non-degeneracy of $w$ (cf. Lemma \ref{lem-w}).
It is worth pointing out that the function $\phi$ and the multiplier $\widehat{\beta}$ found in Step 1 depend on the parameters $\alpha$ and $\textbf{q}$. Hence we write $\phi=\phi(x;\alpha,\textbf{q})$ and $\widehat{\beta}=\widehat{\beta}(\alpha,\textbf{q})$.

\medskip

\noindent
\textbf{Step 2: Solving the reduced problem}

\vspace{1mm}

By Step 1, it is known that $\widehat{\beta}$ is small. But it is not easy to solve $\widehat{\beta}(\alpha,\textbf{q})=0$ directly since the linear part of the expansion of $\widehat{\beta}$ in $\textbf{q}$ is degenerate (due to the invariance of $J(Q_1,\dots,Q_K)$ under rotations).

More precisely, let us write
\begin{equation*}
\widehat{\beta}(\alpha,\textbf{q})=\widetilde{T}\textbf{q}+\Phi(\alpha,\textbf{q}),
\end{equation*}
where $\widetilde{T}\textbf{q}$ is the linear part and $\Phi(\alpha,\textbf{q})$ denotes the remaining term.
As we will see in Section 5, $\widetilde{T}\textbf{q}$ does not depend on $\alpha$ and there is a unique vector (up to a scalar)
\begin{align*}
\textbf{q}_0=(\underbrace{0,\dots,0}_{K},\underbrace{1,\dots,1}_{K})^T\in\mathbb{R}^{2K}
\end{align*}
such that $\widetilde{T}\textbf{q}_0=0$.

By the Lyapunov-Schmidt reduction again (called the secondary Lyapunov-Schmidt reduction), the step of solving the reduced problem $\widehat{\beta}(\alpha,\textbf{q})=0$ can be divided into two steps.
To write the projected problem of $\widehat{\beta}=0$ in a proper form, note that
\begin{align*}
\frac{\partial U}{\partial \alpha}&=R\sum_{j=1}^K\frac{\partial U}{\partial g_j}+\sum_{j=1}^K\left(f_j\frac{\partial U}{\partial g_j}-g_j\frac{\partial U}{\partial f_j}\right)=(R\textbf{q}_0+\textbf{q}^\perp)\cdot\frac{\partial U}{\partial\textbf{q}},
\end{align*}
where $\textbf{q}^\perp=(-\vec{g},\vec{f})$ for $\textbf{q}=(\vec{f},\vec{g})$. Hence we define
\begin{equation}\label{beta-gamma}
\vec{\beta}=\widehat{\beta}-\gamma(R\textbf{q}_0+\textbf{q}^\perp),\ \text{for every }\gamma\in\mathbb{R}.
\end{equation}
Obviously the new multiplier $\vec{\beta}$ depends on the parameters $\alpha$, $\textbf{q}$ and $\gamma$. Thus we write $\vec{\beta}=\vec{\beta}(\alpha,\textbf{q},\gamma)$.

\medskip

\noindent
\textbf{Step 2.A: Solving $\vec{\beta}(\alpha,\textbf{q},\gamma)=0$ by adjusting $\gamma$ and $\textbf{q}$.}

\vspace{1mm}

In this step, for each $\alpha\in\mathbb{R}$, we are going to find parameters $(\gamma,\textbf{q})$ such that
\begin{equation}\label{solve-beta}
\vec{\beta}(\alpha,\textbf{q},\gamma)=0,\  \text{and}\ \textbf{q}\perp\textbf{q}_0.
\end{equation}
It can be seen as the step of solving the projected problem in the secondary Lyapunov-Schmidt reduction. To achieve it, we will use the condition \eqref{cond-m}. This step is done in Section 5 by using various integral estimates and contraction mapping theorem. A key element in this step is the invertibility of an $2K\times2K$ matrix whose proof is given in Appendix A.

When Step 2.A is done, we denote the unique solution of \eqref{solve-beta} by $(\gamma(\alpha),\textbf{q}(\alpha))$. Then the original problem~\eqref{eq-u} is reduced to the problem $\gamma(\alpha)=0$ of one dimension.

\medskip

\noindent
\textbf{Step 2.B: Solving $\gamma(\alpha)=0$ by choosing $\alpha$.}

\vspace{1mm}

At the last step, we want to prove that there exists an $\alpha$ such that $\gamma(\alpha)=0$. As a result, the function $u=U+\phi$ is a genuine solution of \eqref{eq-u}.

This step is the second step of solving the reduced problem in the secondary Lyapunov-Schmidt reduction. To achieve this step, by Step 2.A, the function $\phi=\phi(x;\alpha,\textbf{q}(\alpha))$ found in Step 1 solves the following problem:
\begin{equation}\label{eq-u-2}
\begin{cases}
L[\phi]+E+N(\phi)=\gamma(\alpha)\frac{\partial U}{\partial\alpha},
\vspace{1mm}\\
\int_{\mathbb{R}^N}\phi\,\mathcal{Z}_{Q_j}\,dx=0,\ \forall\,j=1,\dots,K,
\end{cases}
\end{equation}
where all of the quantities depending implicitly on $(\alpha,\textbf{q})$ are taken values at $(\alpha,\textbf{q}(\alpha))$.
To solve $\gamma(\alpha)=0$, we first apply the so-called variational reduction (often used in the localized energy method) to show that equation $\gamma(\alpha)=0$ has a solution if the reduced energy function $F(\alpha)=\mathcal{E}(U+\phi)$ has a critical point. Secondly, by using \eqref{Q-1}, it is easy to check that $F(\alpha)$ is $2\pi$ periodic in $\alpha$. Hence it has at least {\bf two} critical points. More details of this step will be given in Section 6.

\vspace{3mm}

Finally, this paper is organized as follows. Some preliminary facts and estimates are explained in Section 3. In Section 4 we apply the standard Lyapunov-Schmidt reduction for Step 1.
Section 5 contains a further reduction process for Step 2.A which reduces the original problem to one dimension. In Section 6 we carry out Step 2.B and then complete the proof of Theorem \ref{thm-2}. At the last, we discuss some possible extensions in Section 7.


\section{Preliminaries}

In this section we present some preliminary facts and some useful estimates.

First we recall some basic and useful properties of the standard spike solution $w$ defined by \eqref{eq-w} and those of the interaction function $\Psi$ defined in \eqref{def-Psi}.

\begin{lemma}\label{lem-w}
If $1<p<\infty$ for $N=2$ and $1<p<\frac{N+2}{N-2}$ for $N\geq3$, then every positive solution of the problem:
\begin{align}
\left\{
\begin{array}{ll}
-\Delta u+u-u^p=0\ \text{in}\ \mathbb{R}^N,
\vspace{1mm}\\
u>0\ \text{in}\ \mathbb{R}^N,\ u\in H^1(\mathbb{R}^N),
\end{array}\right.
\end{align}has the form $w(\cdot-Q)$ for some $Q\in\mathbb{R}^N$, where $w(x)=w(|x|)\in C^\infty(\mathbb{R}^N)$ is the unique positive radial solution which satisfies
\begin{equation}
\lim_{r\rightarrow\infty}r^{\frac{N-1}{2}}e^rw(r)=c_{N,p},\quad
\lim_{r\rightarrow\infty}\frac{w'(r)}{w(r)}=-1.
\end{equation}
Here $c_{N,p}$ is a positive constant depending only on $N$ and $p$. Furthermore, the Morse index of $w$ is one and $w$ is nondegenerate in the sense that
\[
\text{Ker}\left(-\Delta+1-pw^{p-1}\right)\cap L^\infty(\mathbb{R}^N)=\text{Span}\left\{\partial_{x_1}w,\cdots,\partial_{x_N}w\right\}.
\]
\end{lemma}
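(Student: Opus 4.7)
The plan is to assemble this lemma from well-known classical results rather than reprove anything from scratch, since it is a standard compilation of facts about the ground state of the autonomous limit equation, and nothing in it is specific to the construction in this paper.

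For the classification statement, I would proceed in three stages. Existence of a positive radial $H^1$ solution is provided by the Berestycki-Lions variational argument. The moving plane method of Gidas-Ni-Nirenberg then shows that every positive $H^1$ solution of the equation is, after translation, radially symmetric and strictly decreasing in $r$, reducing matters to the radial ODE $-w'' - \tfrac{N-1}{r} w' + w - w^p = 0$. Uniqueness of the positive radial solution is Kwong's theorem. Smoothness is by elliptic regularity. Once radiality is fixed, the asymptotics $r^{(N-1)/2} e^r w(r) \to c_{N,p}$ and $w'(r)/w(r) \to -1$ follow from standard ODE asymptotics: setting $v(r) = r^{(N-1)/2} w(r)$ one checks that $v$ satisfies a perturbed equation of the form $-v'' + v = [w^{p-1} + O(r^{-2})] v$, in which the $w^{p-1}$ term decays exponentially once $w$ is known to decay at all. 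A WKB/matching argument then identifies the unique positive decaying solution with a constant multiple of $e^{-r}$.

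For the non-degeneracy and Morse index I would decompose an arbitrary bounded kernel element in spherical harmonics, $\phi(x) = \sum_{k \geq 0} \phi_k(r) Y_k(\theta)$. On mode $k$, the operator $L = -\Delta + 1 - p w^{p-1}$ reduces to a scalar ODE with an added centrifugal term $k(k+N-2)/r^2$. For $k = 1$, differentiating the equation for $w$ shows that $w'(r)$ spans the bounded kernel of that mode, which reassembles to the $N$ translation directions $\partial_{x_i} w$. For $k \geq 2$ the centrifugal term makes $L_k$ strictly positive on bounded functions, so the kernel is trivial. For $k = 0$, triviality of the bounded radial kernel is the delicate point, and rests on a Sturm-type comparison argument of Kwong. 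The Morse index count is then immediate: mode zero contributes a single negative eigenvalue, since $\langle L w, w \rangle = -(p-1) \int w^{p+1}\, dx < 0$ provides at least one negative direction and the kernel analysis above together with the uniqueness of the radial ground state prevents any additional one; modes $k \geq 1$ are non-negative, with eigenvalue zero attained precisely on $k = 1$.

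The only genuine obstacle is the mode-zero non-degeneracy, i.e.\ the absence of bounded radial kernel elements beyond the translations; every other ingredient in the lemma is a direct quotation from the cited literature.
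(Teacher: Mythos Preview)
Your proposal is correct and follows exactly the same approach as the paper: both treat the lemma as a compilation of classical facts and invoke Berestycki--Lions for existence, Gidas--Ni--Nirenberg for radial symmetry, Kwong for uniqueness, and standard ODE/spherical-harmonic arguments for the asymptotics, nondegeneracy, and Morse index. The paper's own proof is in fact nothing more than a list of citations (to \cite{BL-83}, \cite{GNN-81}, \cite{K-89}, Appendix~C of \cite{NT-93}, and \cite{BW-10}), so your expanded sketch is if anything more informative than what appears there.
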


\begin{proof}
This result is well known. For the proof we refer the reader to \cite{BL-83} for the existence, \cite{GNN-81} for the symmetry, \cite{K-89} for the uniqueness, Appendix C in \cite{NT-93} for the nondegeneracy, and \cite{BW-10} for the Morse index.
\end{proof}

\begin{lemma}\label{lem-Psi}
For $s$ sufficiently large,
\begin{equation}\label{Psi}
\Psi(s)=c_{N,p}\,s^{-\frac{N-1}{2}}e^{-s}\left(1+O(s^{-1})\right).
\end{equation}
where $c_{N,p}>0$ is a constant depending only on $N$ and $p$.
\end{lemma}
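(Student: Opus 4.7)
My plan is to reduce the computation of $\Psi(s)$ to a standard interaction-type integral involving translates of $w$ and $w^p$, and then invoke the sharp pointwise asymptotics of $w$ recorded in Lemma~\ref{lem-w}. By the radial symmetry of $w$, I may take $\vec{\mathbf{e}}=e_1$ without loss of generality. Then $\operatorname{div}(w^p(x)\vec{\mathbf{e}})=\partial_{x_1}(w^p)$, and an integration by parts turns the definition of $\Psi$ into
\begin{equation*}
\Psi(s)=\int_{\mathbb{R}^N}\partial_{x_1}w(x-s e_1)\,w^p(x)\,dx.
\end{equation*}
After the change of variable $y=x$ (keeping the mass of $w^p$ concentrated near the origin), the problem becomes purely one of asymptotic analysis: $w^p(y)$ is exponentially localized while $\partial_{x_1}w(y-se_1)$ is a smooth, slowly varying function of $s$ on the support of $w^p$.

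Next I would plug in the pointwise asymptotics. Lemma~\ref{lem-w} gives $w(r)=c_{N,p}r^{-(N-1)/2}e^{-r}(1+O(r^{-1}))$ and, by differentiating the ODE or applying the same type of argument to $w'$, $w'(r)=-c_{N,p}r^{-(N-1)/2}e^{-r}(1+O(r^{-1}))$. Using $\partial_{x_1}w(z)=w'(|z|)\,z_1/|z|$ at $z=y-se_1$ together with the elementary expansion
\begin{equation*}
|y-se_1|=s-y_1+\frac{|y|^2-y_1^2}{2s}+O\!\bigl(|y|^3 s^{-2}\bigr),\qquad \frac{y_1-s}{|y-se_1|}=-1+O(s^{-1}),
\end{equation*}
one obtains, for $|y|$ bounded,
\begin{equation*}
\partial_{x_1}w(y-se_1)=c_{N,p}\,s^{-(N-1)/2}e^{-s}\,e^{y_1}\bigl(1+O(s^{-1})\bigr).
\end{equation*}
Integrating this identity against $w^p(y)$ produces the main term $c_{N,p}s^{-(N-1)/2}e^{-s}\int_{\mathbb{R}^N}e^{y_1}w^p(y)\,dy$, which is precisely the prefactor $c_{N,p}s^{-(N-1)/2}e^{-s}$ (with a redefined constant) appearing in \eqref{Psi}.

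The one technical point to handle is that the pointwise $(1+O(s^{-1}))$ error above is only valid on a bounded region of $y$, whereas in principle the integration runs over all of $\mathbb{R}^N$. I would resolve this by splitting the domain at, say, $|y|\le R_s:=\kappa\log s$ for a large constant $\kappa$. On the tail $|y|>R_s$, the crude bound $|\partial_{x_1}w(y-se_1)|\le C e^{-|y-se_1|}$ combined with $w^p(y)\le Ce^{-p|y|}$ and the triangle inequality $|y-se_1|\ge s-|y|$ gives an exponentially decaying estimate of the form $Ce^{-s}e^{-(p-1)R_s/2}$, which is $O(e^{-s}s^{-K})$ for any $K$ once $\kappa$ is chosen large. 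On the bounded part, the quadratic term $(|y|^2-y_1^2)/(2s)$ in the expansion of $|y-se_1|$ contributes to the exponential factor a multiplier $1-(|y|^2-y_1^2)/(2s)+O(s^{-2}(|y|^4+|y|^3))$; integrated against the exponentially decaying weight $e^{y_1}w^p(y)$, this yields a clean $O(s^{-1})$ correction to the leading term.

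The main obstacle is precisely this bookkeeping of the two competing scales: the exponential localization of $w^p$ (which demands integrating on a bounded set) versus the quadratic correction in the expansion of $|y-se_1|$ (whose $|y|^2/s$ size is only $O(s^{-1})$ on truly bounded regions). A careful dyadic or polynomial-in-$\log s$ truncation, combined with the integrability of $e^{y_1}w^p(y)$ and all its polynomial moments (which follows from the exponential decay of $w$), makes both errors of order $O(s^{-1})$ simultaneously, yielding \eqref{Psi}.
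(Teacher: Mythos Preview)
Your proposal is correct and follows exactly the approach the paper indicates: the paper's own proof is not written out but simply states that the lemma ``follows from Taylor's theorem and the Lebesgue dominated convergence theorem'' and refers to \cite{KW-00,M-09}. Your argument---integrating by parts to get $\int \partial_{x_1}w(y-se_1)\,w^p(y)\,dy$, expanding $|y-se_1|$ via Taylor, and controlling the tail by the exponential decay of $w^p$---is precisely the standard computation those references contain, so you have supplied the details the paper omits.
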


\begin{proof}
This lemma follows from Taylor's theorem and the Lebesgue dominated convergence theorem. We omit it here and refer to \cite{KW-00,M-09} for details.
\end{proof}

Next we study the balancing condition \eqref{eq-R}. Assuming that
\begin{equation*}
d=2R\sin\frac{\pi}{K}\rightarrow+\infty,\ \text{as}\ K\rightarrow+\infty,
\end{equation*}
by the expansion \eqref{Psi}, both positive numbers $R$ and $d$ are uniquely determined by $K$. Moreover, we have the following expansions.

\begin{lemma}\label{lem-d-R}
For $K$ sufficiently large,
\begin{align}\label{eq-d}
d&=m\ln K+\Big(m-\frac{N-3}{2}\Big)\ln(m\ln K)+O(1),\\
R&=\frac{m}{2\pi}K\ln K+\frac{1}{2\pi}\Big(m-\frac{N-3}{2}\Big)K\ln(m\ln K)+O(K).\nonumber
\end{align}
\end{lemma}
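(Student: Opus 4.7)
The plan is to reduce the balancing condition \eqref{eq-R} to an implicit scalar equation for $d$ alone, analyse it by logarithmic bootstrap, and then recover $R$ from the geometric identity $R = d/(2\sin(\pi/K))$.

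First I would rewrite the balancing condition by multiplying both sides of \eqref{eq-R} by $R$ and using $2R\sin(\pi/K) = d$, which yields the clean form
\begin{equation*}
a_0\,m\,R^{-m} \;=\; d\,\Psi(d).
\end{equation*}
Then I substitute the expansion from Lemma on $\Psi$, namely $\Psi(d)=c_{N,p}\,d^{-(N-1)/2}e^{-d}(1+O(d^{-1}))$, together with the geometric relation
\begin{equation*}
R \;=\; \frac{d}{2\sin(\pi/K)} \;=\; \frac{K d}{2\pi}\bigl(1+O(K^{-2})\bigr),
\end{equation*}
which gives $R^{-m} = (2\pi/(Kd))^m(1+O(K^{-2}))$. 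Combining these turns the balancing condition into
\begin{equation*}
a_0\,m\,(2\pi)^m \, K^{-m} d^{-m}\bigl(1+o(1)\bigr) \;=\; c_{N,p}\, d^{(3-N)/2} e^{-d}\bigl(1+O(d^{-1})\bigr).
\end{equation*}

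Next I would take logarithms of both sides and isolate $d$, arriving at
\begin{equation*}
d \;=\; m\ln K + \Bigl(m-\tfrac{N-3}{2}\Bigr)\ln d + C_{0} + O(1),
\end{equation*}
where $C_0$ is an explicit constant built from $a_0$, $m$, $N$, and $c_{N,p}$. The bootstrap is then immediate: the equation forces $d \sim m\ln K$ to leading order, so $\ln d = \ln(m\ln K)+o(1)$; feeding this back yields the claimed expansion
\begin{equation*}
d \;=\; m\ln K + \Bigl(m-\tfrac{N-3}{2}\Bigr)\ln(m\ln K) + O(1).
\end{equation*}
(Uniqueness of $d$ given $K$, which is asserted just before the lemma, follows from monotonicity of $d \mapsto d\Psi(d) e^{d}$ in the relevant range, or equivalently by noting that the difference of the two sides of the balancing condition is strictly monotone in $R$ for large $R$.) Finally, multiplying this expansion of $d$ by $K/(2\pi)$ and absorbing the $O(K^{-2})$ factor from $2\sin(\pi/K)=2\pi/K+O(K^{-3})$ into an $O(K)$ error gives the expansion of $R$.

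The computation is routine once the balancing condition is put in the form $a_0 m R^{-m}=d\Psi(d)$; the only mildly delicate point is keeping track of the constant terms cleanly enough to see that they are all absorbed into the stated $O(1)$ (for $d$) and $O(K)$ (for $R$), and that the replacement of $\ln d$ by $\ln(m\ln K)$ generates only an $O(1)$ error rather than something growing. No other obstacle arises.
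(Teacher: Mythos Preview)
Your proof is correct and follows essentially the same route as the paper's: both reduce the balancing condition to the scalar equation $d^{m+1}\Psi(d)=a_0 m(2\sin(\pi/K))^m$ (equivalently your $a_0 m R^{-m}=d\Psi(d)$), substitute the asymptotics of $\Psi$, and then extract the two-term expansion of $d$ by a logarithmic argument. The only cosmetic difference is that the paper writes the ansatz $d=m\ln K+d_1$ and solves for $d_1$, whereas you phrase the same step as a bootstrap after taking logarithms.
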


\begin{proof}
From the balancing conditon\eqref{eq-R}, the number $d$ satisfies the equation
\begin{align}\label{eq-d-1}
d^{m+1}\Psi(d)=a_0m\big(2\sin\frac{\pi}{K}\big)^m.
\end{align}
By using \eqref{Psi}, for $K$ sufficiently large, equation \eqref{eq-d-1} becomes
\begin{align*}
c_{N,p}d^{m+1-\frac{N-1}{2}}e^{-d}(1+O(d^{-1}))=a_0m(2\pi)^mK^{-m}\big(1+O(K^{-2})\big),
\end{align*}
from which we obtain
\[
d^{m+1-\frac{N-1}{2}}e^{-d}\sim K^{-m}.
\]
Let $d=m\ln K+d_1$ with $d_1=o(\ln K)$, we have
\begin{align*}
c_{N,p}(m\ln K)^{m+1-\frac{N-1}{2}}e^{-d_1}\big(1+o(1)\big)
=a_0m(2\pi)^m(1+O(K^{-2})).
\end{align*}
It follows that
\[
e^{d_1}\sim(m\ln K)^{m+1-\frac{N-1}{2}}.
\]
Therefore,
\begin{align*}
d_1=\Big(m-\frac{N-3}{2}\Big)\ln(m\ln K)+O(1),
\end{align*}
from which we get the expansions of $d$ and $R$.
\end{proof}

In the next section, we will apply the Lyapunov-Schmidt reduction. After refinements  by many authors working on the subject or on closely related problems, this type of argument is rather standard now. However technical difficulties arise when the number of spikes goes to infinity or the number of spikes is infinity (cf. \cite{M-09}). To deal with these difficulties, the following lemmas are useful.

\begin{lemma}\label{lem-N}
There exists a constant $C_N$ depending only on $N$ such that for any $K\in\mathbb{N}_+$ and any $\textbf{Q}=(Q_1,\dots,Q_K)\in(\mathbb{R}^N)^K$,
\begin{equation}\label{Q-l-N}
\#\Big\{Q_j\,\big|\,\ell\rho/2\leq|Q_j-x|<(\ell+1)\rho/2\Big\}\leq C_N(\ell+1)^{N-1},
\end{equation}
for all $x\in\mathbb{R}^N$ and all $\ell\in\mathbb{N}$, where $\rho=\min_{j\neq l}\{|Q_j-Q_l|\}$.
In particular, if $\textbf{Q}=(Q_1,\dots,Q_K)\in(\mathbb{R}^2\times\{0\})^K$, then for all $x\in\mathbb{R}^N$ and all $\ell\in\mathbb{N}$,
\begin{equation}\label{Q-l-2}
\#\Big\{Q_j\,\big|\,\ell\rho/2\leq|Q_j-x|<(\ell+1)\rho/2\Big\}\leq6(\ell+1).
\end{equation}
\end{lemma}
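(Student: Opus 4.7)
The plan is a standard volume-packing argument, built on the single observation that the hypothesis $\rho=\min_{j\neq l}|Q_j-Q_l|$ forces the open balls $B(Q_j,\rho/2)$ to be mutually disjoint. The bound on the number of centers $Q_j$ falling in a spherical shell of inner radius $\ell\rho/2$ and outer radius $(\ell+1)\rho/2$ around $x$ will then follow from comparing the total volume of these disjoint half-radius balls to the volume of a slightly fattened shell that must contain them all.

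For the general $\mathbb{R}^N$ estimate I would argue as follows: if $\ell\rho/2\leq|Q_j-x|<(\ell+1)\rho/2$, then by the triangle inequality every $y\in B(Q_j,\rho/2)$ satisfies $\max\{0,\ell-1\}\rho/2\leq|y-x|<(\ell+2)\rho/2$, so each such ball lies inside the annulus $A_\ell:=B(x,(\ell+2)\rho/2)\setminus B(x,\max\{0,\ell-1\}\rho/2)$. Since the $B(Q_j,\rho/2)$ are pairwise disjoint, the number of admissible $Q_j$ is bounded by $|A_\ell|/|B(0,\rho/2)|=(\ell+2)^N-\max\{0,\ell-1\}^N$. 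The identity $a^N-b^N=(a-b)\sum_{k=0}^{N-1}a^{N-1-k}b^{k}$ applied with $a=\ell+2$, $b=\max\{0,\ell-1\}$, together with $\ell+2\leq 2(\ell+1)$, gives a bound of the form $C_N(\ell+1)^{N-1}$ with $C_N$ depending only on $N$.

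For the planar refinement, I would write $x=(x',x'')\in\mathbb{R}^2\times\mathbb{R}^{N-2}$ and identify each $Q_j\in\mathbb{R}^2\times\{0\}$ with its first two coordinates. Since $|Q_j-x|^2=|Q_j-x'|^2+|x''|^2$, the condition $\ell\rho/2\leq|Q_j-x|<(\ell+1)\rho/2$ forces $|Q_j-x'|$ to lie in a \emph{planar} annulus $[a,b)$ with $b^2-a^2=(2\ell+1)(\rho/2)^2$ and $a+b\leq(2\ell+1)\rho/2$ (taking $a=0$ when the defining expression is non-positive, and noting the count is zero as soon as $|x''|\geq(\ell+1)\rho/2$). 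The two-dimensional discs $B^2(Q_j,\rho/2)$ remain disjoint because $|Q_j-Q_l|$ is the same whether computed in $\mathbb{R}^N$ or $\mathbb{R}^2$, and each is contained in the enlarged planar annulus of radii $[\max(a-\rho/2,0),b+\rho/2]$. Expanding
\[
\bigl(b+\rho/2\bigr)^2-\bigl(a-\rho/2\bigr)^2=b^2-a^2+\rho(a+b)\leq 3(2\ell+1)(\rho/2)^2
\]
and dividing by the area $\pi(\rho/2)^2$ of each disc yields the bound $3(2\ell+1)\leq 6(\ell+1)$.

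The main (very mild) obstacle is bookkeeping the edge cases $\ell=0$ and $|x''|$ comparable to $\ell\rho/2$, where the inner radius of the enclosing annulus degenerates or becomes imaginary; in each such case one simply drops the inner subtraction, and the resulting estimate is only tighter, so no genuine difficulty arises. The only structural ingredient used is the $\rho$-separation of the centers, which is why the same packing argument produces both the dimensional bound $C_N(\ell+1)^{N-1}$ and its sharper planar counterpart.
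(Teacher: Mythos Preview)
Your argument is correct and follows essentially the same volume-packing strategy as the paper: the disjoint balls $B(Q_j,\rho/2)$ must fit in a slightly enlarged annulus, and comparing volumes gives the count. For the planar refinement the paper simply intersects the $N$-dimensional balls and annulus with $\mathbb{R}^2\times\{0\}$, which makes the $|x''|^2$ contributions cancel identically and sidesteps the edge-case bookkeeping you flag; your remark that dropping the inner subtraction makes the estimate ``only tighter'' is phrased backwards (it enlarges the area bound), but since $(a-\rho/2)^2<(\rho/2)^2$ in that regime one still gets a count of at most $3(2\ell+1)+1\leq 6(\ell+1)$, so no real gap remains.
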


\begin{proof}
When $\rho=0$, the result is trivial. It remains to consider the case $\rho>0$.
For $\ell=0$, it suffices to take $C_N\geq1$. For $\ell\geq1$, let $Q_{j_k}$'s ($k=1,\dots,n$) be the points satisfying
\begin{equation*}
\ell\rho/2\leq|Q_{j_k}-x|<(\ell+1)\rho/2.
\end{equation*}
By the triangle inequality, we have
\begin{equation*}
(\ell-1)\rho/2\leq|y-x|<(\ell+2)\rho/2,\ \forall\,y\in B_{\rho/2}(Q_{j_k}).
\end{equation*}
Hence for all $k=1,\dots,n$,
\begin{equation*}
B_{\rho/2}(Q_{j_k})\subset B_{(\ell+2)\rho/2}(x)\setminus B_{(\ell-1)\rho/2}(x).
\end{equation*}
Since $B_{\rho/2}(Q_{j_l})\cap B_{\rho/2}(Q_{j_k})=\emptyset$ for $l\neq k$,  we conclude that
\begin{equation*}
\sum_{k=1}^n\left|B_{\rho/2}(Q_{j_k})\right|\leq\left|B_{(\ell+2)\rho/2}(x)\setminus B_{(\ell-1)\rho/2}(x)\right|.
\end{equation*}
Therefore, taking $C_N=\sup_{\ell\in\mathbb{N}_+}\frac{(\ell+2)^N-(\ell-1)^N}{(\ell+1)^{N-1}}$, we have
\begin{align*}
n\leq (\ell+2)^N-(\ell-1)^N\leq C_N(\ell+1)^{N-1},
\end{align*}
which implies \eqref{Q-l-N}.

If $\textbf{Q}=(Q_1,\dots,Q_K)\in(\mathbb{R}^2\times\{0\})^K$, the above argument implies that
\[
\sum_{k=1}^n\left|B_{\rho/2}(Q_{j_k})\cap\mathbb{R}^2\times\{0\}\right|\leq\big|B_{(\ell+2)\rho/2}(x)\setminus B_{(\ell-1)\rho/2}(x)\cap\mathbb{R}^2\times\{0\}\big|,
\]
which implies that
\[
n\leq (\ell+2)^2-(\ell-1)^2\leq6(\ell+1).
\]
Therefore, we get the estimate~\eqref{Q-l-2} if we restrict $\textbf{Q}=(Q_1,\dots,Q_K)$ on $(\mathbb{R}^2\times\{0\})^K$.
\end{proof}

Given $\textbf{Q}=(Q_1,\dots,Q_K)\in(\mathbb{R}^N)^K$ with $\rho=\min_{j\neq l}\{|Q_j-Q_l|\}>0$, for any $\ell\in\mathbb{N}$, we divide $\mathbb{R}^N$ into $K+1$ parts:
\begin{equation*}
\Omega_j^\ell=\Big\{x\in\mathbb{R}^N\,\big|\,|x-Q_j|=\min_{1\leq l\leq K}|x-Q_l|\leq \ell\rho/2\Big\},\ \forall\,j=1,\dots,K,
\end{equation*}
and
$\Omega_{K+1}^\ell=\mathbb{R}^N\setminus\cup_{j=1}^K\Omega_j^\ell
$. Then the interior of $\Omega_j^\ell\cap\Omega_l^\ell$ is an empty set for $j\neq l$.

\begin{lemma}\label{lem-3.5}
Suppose that $\varGamma(r)$ is a positive decreasing function defined on $[0,\infty)$ such that for some $b\in\mathbb{R}$ and $\eta>0$,
\begin{equation}\label{gamma-infinity}
\varGamma(r)\sim r^{b}e^{-\eta r}\ \text{as}\ r\rightarrow\infty.
\end{equation}
Then there exist positive constants $\rho_0$ and $C$  (independent of $K$) such that
\begin{enumerate}[(i)]
\item for all $K,\ell\in\mathbb{N}_+$, all $(Q_1,\dots,Q_K)\in(\mathbb{R}^N)^K$ with $\rho\geq\rho_0$, and all $x\in\Omega_{j_0}^\ell$ ($j_0=1,\dots,K$), we have
\begin{equation}\label{eq-gamma-0}
\sum_{j=1}^K\varGamma(|x-Q_j|)\leq C\ell^{N-1}\varGamma(|x-Q_{j_0}|).
\end{equation}
In particular, if $(Q_1,\dots,Q_K)\in(\mathbb{R}^2\times\{0\})^K$, then
\begin{equation*}
\sum_{j=1}^K\varGamma(|x-Q_j|)\leq C\ell\varGamma(|x-Q_{j_0}|).
\end{equation*}

\item for all $(Q_1,\dots,Q_K)\in(\mathbb{R}^N)^K$ with $\rho\geq\rho_0$ and all $j_0\in\{1,\dots,K\}$,
\begin{equation}\label{eq-gamma}
\sum_{j\neq j_0}\varGamma(|Q_{j_0}-Q_{j}|)\leq C\varGamma(\rho).
\end{equation}
\end{enumerate}
\end{lemma}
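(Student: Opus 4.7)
\medskip\noindent\textbf{Proof strategy.}
The plan for both parts is the same: decompose the index set $\{1,\dots,K\}$ into concentric annuli around a well-chosen base point, count the indices per annulus using Lemma~\ref{lem-N}, bound $\varGamma$ on each annulus by its value at the inner radius by monotonicity, and invoke the asymptotic \eqref{gamma-infinity} to see that the resulting series is dominated by its leading term once $\rho_0$ is chosen large enough.

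For \textup{(ii)}, I would take the base point to be $Q_{j_0}$. Since $|Q_j-Q_{j_0}|\ge\rho$ for $j\ne j_0$, only annuli indexed by $k\ge 2$ intersect the configuration. Writing $A_k:=\{j\ne j_0:k\rho/2\le|Q_j-Q_{j_0}|<(k+1)\rho/2\}$, Lemma~\ref{lem-N} gives $\#A_k\le C_N(k+1)^{N-1}$, and monotonicity gives $\varGamma(|Q_j-Q_{j_0}|)\le\varGamma(k\rho/2)$ for $j\in A_k$. The resulting series $\sum_{k\ge 2}C_N(k+1)^{N-1}\varGamma(k\rho/2)$ is, by \eqref{gamma-infinity}, of geometric type with ratio $e^{-\eta\rho/2}$ and is therefore dominated by its first term $C\varGamma(\rho)$ once $\rho_0$ is large.

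For \textup{(i)} the base point would be $x$ itself, which is harder because $x$ can lie anywhere in $\Omega_{j_0}^\ell$. I would set $t:=|x-Q_{j_0}|\le\ell\rho/2$ and $k_\ast:=\lfloor 2t/\rho\rfloor\le\ell$, and partition the indices via $A_k:=\{j:k\rho/2\le|x-Q_j|<(k+1)\rho/2\}$ for $k\ge k_\ast$. On the innermost annulus $A_{k_\ast}$, which contains $Q_{j_0}$, the Voronoi property $|x-Q_j|\ge t$ (valid for every $j$) together with monotonicity of $\varGamma$ gives a contribution of at most $C_N(k_\ast+1)^{N-1}\varGamma(t)\le C\ell^{N-1}\varGamma(t)$. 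On the outer annuli, \eqref{gamma-infinity} yields a comparison of the form $\varGamma(k\rho/2)/\varGamma(t)\le C(k/(k_\ast+1))^{|b|}e^{-\eta(k-k_\ast-1)\rho/2}$ provided $t\ge\rho_0$; combined with $\#A_k\le C_N(k+1)^{N-1}$ and the substitution $j=k-k_\ast$, the tail becomes a series summable uniformly in $k_\ast$ and bounded by $C\ell^{N-1}\varGamma(t)$, once $\rho_0$ is large. The planar refinement $(Q_1,\dots,Q_K)\in(\mathbb{R}^2\times\{0\})^K$ proceeds identically, with $(k+1)^{N-1}$ replaced by $6(k+1)$ via the second bound in Lemma~\ref{lem-N}, which produces the sharper factor $\ell$ in place of $\ell^{N-1}$.

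The main obstacle I anticipate is purely technical: the comparison above is valid only when $t\ge\rho_0$, so the degenerate regime $t<\rho_0$ in \textup{(i)} needs a separate argument. It is in fact easier: there $\varGamma(t)\ge\varGamma(\rho_0)$ is bounded below, and the triangle inequality forces $|x-Q_j|\ge\rho-t\ge\rho/2$ for $j\ne j_0$, so $\sum_{j\ne j_0}\varGamma(|x-Q_j|)$ can be made much smaller than $\varGamma(\rho_0)$ by enlarging $\rho_0$. Taking $\rho_0$ large enough to cover both regimes in both parts completes the proof.
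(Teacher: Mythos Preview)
Your proposal is correct. For part~(i) it is essentially the paper's argument: the paper also fixes $x\in\Omega_{j_0}^\ell$, picks the integer $\ell_0$ with $\ell_0\rho/2\le|x-Q_{j_0}|<(\ell_0+1)\rho/2$ (your $k_\ast$), applies Lemma~\ref{lem-N} on each annulus centred at $x$, and sums. One small difference: the paper compares the tail terms $\varGamma(s\rho/2)$ to $\varGamma\big((\ell_0+1)\rho/2\big)$ rather than to $\varGamma(t)$ directly, and only afterwards uses monotonicity to pass to $\varGamma(|x-Q_{j_0}|)$. Because $(\ell_0+1)\rho/2\ge\rho/2\ge\rho_0/2$, the asymptotic \eqref{gamma-infinity} applies uniformly, so the paper avoids the separate ``$t<\rho_0$'' case you anticipated. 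Your treatment of that case is fine (note that $|x-Q_j|\ge\max\{t,\rho-t\}\ge\rho/2$, using the Voronoi property as well as the triangle inequality, and then the annulus sum around $x$ gives the required smallness), but the paper's comparison point is a little cleaner.

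For part~(ii) your route genuinely differs. You run the annulus decomposition directly about $Q_{j_0}$ and sum, which is the shortest path. The paper instead \emph{deduces} (ii) from (i): it deletes $Q_{j_0}$ to form the $(K-1)$-tuple $\widehat{\mathbf Q}$ with separation $\widehat\rho\ge\rho$, chooses $j_1$ realising $\min_{j\ne j_0}|Q_{j_0}-Q_j|$, and applies \eqref{eq-gamma-0} at $x=Q_{j_0}$ with an appropriate $\ell$, followed by an extra estimate on $|Q_{j_0}-Q_{j_1}|^{N-1}\varGamma(|Q_{j_0}-Q_{j_1}|)$. Your direct argument is simpler and equally valid; the paper's detour buys nothing extra here beyond formally making (ii) a corollary of (i).
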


\begin{remark}
A similar result holds when $\varGamma(r)$ has polynomial decay. For example, if for some integer $n\in\mathbb{N}_+$,
\begin{equation*}
\varGamma(r)\sim r^{b}\ \text{as}\ r\rightarrow+\infty,\ \text{where}\ b<-n,
\end{equation*}
then there are positive constants $\rho_0$ and $C$ (independent of $K$) such that for all $K\in\mathbb{N}_+$, all $(Q_1,\dots,Q_K)\in(\mathbb{R}^n\times\{0\})^K$ with $\rho\geq\rho_0$, and all $j_0\in\{1,\dots,K\}$,
\begin{equation*}
\sum_{j\neq j_0}\varGamma(|Q_{j}-Q_{j_0}|)\leq C\varGamma(\rho).
\end{equation*}
This kind of property is useful and important in the construction of infinitely many solutions of problem with critical growth.
\end{remark}

\begin{proof}
Given $x\in\Omega_{j_0}^\ell$, by definition we have
\[
|x-Q_{j_0}|\leq\ell\rho/2\ \text{and}\ |x-Q_{j_0}|\leq|x-Q_j|,\ \forall\,j=1,\dots,K.
\]
Thus there is an integer $0\leq\ell_0\leq \ell$ such that
\begin{equation*}
\ell_0\rho/2\leq|x-Q_{j_0}|<(\ell_0+1)\rho/2.
\end{equation*}
By the property of $\varGamma(r)$ and Lemma \ref{lem-N}, for $\rho$ sufficiently large, we have
\begin{align*}
\sum_{j=1}^K\varGamma(|x-Q_j|)&
\leq C_N(\ell_0+1)^{N-1}\varGamma(|x-Q_{j_0}|)+C_N\sum_{s=\ell_0+1}^{+\infty}(s+1)^{N-1}\varGamma(s\rho/2)\\
&\leq C_N(\ell_0+1)^{N-1}\varGamma(|x-Q_{j_0}|)+C(\ell_0+2)^{N-1}\varGamma\big((\ell_0+1)\,\rho/2\big)\\
&\leq C\ell^{N-1}\varGamma(|x-Q_{j_0}|),
\end{align*}
where in the second inequality we use the following inequality:
\begin{align*}
\sum_{s=\ell_0+1}^{+\infty}\frac{(s+1)^{N-1}}{(\ell_0+2)^{N-1}}\frac{\varGamma(s\rho/2)}{\varGamma\big((\ell_0+1)\rho/2\big)}
\leq C.
\end{align*}
To prove it, for $\rho$ sufficiently large, by \eqref{gamma-infinity}, we have
\begin{equation*}
\frac{\varGamma(s\rho/2)}{\varGamma\big((\ell_0+1)\rho/2\big)}
\leq C\Big(\frac{s}{\ell_0+1}\Big)^b e^{-\eta(s-\ell_0-1)\rho/2}.
\end{equation*}
Hence
\begin{align*}
\sum_{s=\ell_0+1}^{+\infty}\frac{(s+1)^{N-1}}{(\ell_0+2)^{N-1}}\frac{\varGamma(s\rho/2)}{\varGamma\big((\ell_0+1)\rho/2\big)}
&\leq C\sum_{s=\ell_0+1}^{+\infty}\Big(\frac{s}{\ell_0+1}\Big)^{N-1+b}e^{-\eta(s-\ell_0-1)\rho/2}\\
&\leq C\sum_{s=\ell_0+1}^{+\infty}e^{-\eta(s-\ell_0-1)\rho/4}\\
&\leq C\int_{0}^{+\infty}e^{-\eta t\rho/4}\,dt
\leq C.
\end{align*}
In particular, if $(Q_1,\dots,Q_K)\in(\mathbb{R}^2\times\{0\})^K$, then by \eqref{Q-l-2}, we can take $N=2$ in the above arguments.

To deduce \eqref{eq-gamma} from \eqref{eq-gamma-0}, denote
\[
\widehat{\textbf{Q}}=(Q_1,\dots,Q_{j_0-1},Q_{j_0+1},\dots,Q_K)\in(\mathbb{R}^N)^{K-1},
\]
and
\[
\widehat{\rho}=\min_{j\neq l}\Big\{|Q_j-Q_l|\,\big|\,j\neq j_0,\,l\neq j_0\Big\}\geq\rho.
\]
Take $j_1\in\{1,\dots,K\}$ such that
\[
|Q_{j_0}-Q_{j_1}|=\min_{l\neq j_0}\big\{|Q_{j_0}-Q_l|\big\},
\]
and choose $\ell\in\mathbb{N}_+$ satisfying
\[
(\ell-1)\widehat{\rho}/2<|Q_{j_0}-Q_{j_1}|\leq\ell\widehat{\rho}/2.
\]
Then by \eqref{eq-gamma-0}, we have
\begin{align*}
\sum_{j\neq j_0}\varGamma(|Q_{j_0}-Q_{j}|)
&\leq C\Big(1+\frac{2|Q_{j_0}-Q_{j_1}|}{\widehat{\rho}}\Big)^{N-1}\varGamma(|Q_{j_0}-Q_{j_1}|)\\
&\leq C\varGamma(|Q_{j_0}-Q_{j_1}|)+C\varGamma(\widehat{\rho})\leq C\varGamma(\rho),
\end{align*}
where in the second inequality we use the following inequality:
\[
|Q_{j_0}-Q_{j_1}|^{N-1}\varGamma\big(|Q_{j_0}-Q_{j_1}|\big)
\leq C\widehat{\rho}^{N-1}\varGamma\big(\widehat{\rho}\big),\ \text{for}\ |Q_{j_0}-Q_{j_1}|\geq\widehat{\rho}\geq\rho_0.
\]
To prove it, we only need to apply \eqref{gamma-infinity}.
\end{proof}

A simple corollary is the following result which is useful in our construction.

\begin{corollary}\label{lem-u}
There are positive constants $\rho_0$ and $C$  (independent of $K$) such that for all $K,\ell\in\mathbb{N}_+$, all $(Q_1,\dots,Q_K)\in(\mathbb{R}^2\times\{0\})^K$ with $\rho\geq\rho_0$, and all $x\in\Omega_{j_0}^\ell$ ($j_0=1,\dots,K$), we have
\begin{equation}\label{ineq-q}
\sum_{j=1}^Kw_{Q_j}(x)\leq C\ell w_{Q_{j_0}}(x),
\end{equation}
and
\begin{align}\label{q-q-K}
\sum_{j\neq j_0}e^{-\eta|Q_j-Q_{j_0}|}\leq Ce^{-\eta\rho}.
\end{align}
\end{corollary}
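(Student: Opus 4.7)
The plan is to obtain both inequalities as immediate applications of Lemma \ref{lem-3.5}, each time with an appropriate choice of the auxiliary function $\varGamma$. No new ideas are needed; the corollary is really just a packaging of Lemma \ref{lem-3.5} with the two specific decay profiles relevant to the construction.

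For the first inequality \eqref{ineq-q}, I would take $\varGamma(r) = w(r)$, the ground state radial profile. By Lemma \ref{lem-w}, $w$ is positive, smooth, and radially decreasing (it is the unique radial ground state), and satisfies the asymptotic
\[
w(r) \sim c_{N,p}\, r^{-(N-1)/2} e^{-r} \quad \text{as } r \to \infty,
\]
so the hypothesis \eqref{gamma-infinity} of Lemma \ref{lem-3.5} is satisfied with $\eta = 1$ and $b = -(N-1)/2$. Since the configuration $\textbf{Q} = (Q_1, \dots, Q_K)$ lies in $(\mathbb{R}^2 \times \{0\})^K$, the ``in particular'' clause of Lemma \ref{lem-3.5}(i) applies, producing a linear factor in $\ell$ rather than $\ell^{N-1}$. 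Writing $w_{Q_j}(x) = w(|x - Q_j|)$, this yields \eqref{ineq-q} directly for $x \in \Omega_{j_0}^\ell$.

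For the second inequality \eqref{q-q-K}, the natural choice is $\varGamma(r) = e^{-\eta r}$, which is manifestly positive and decreasing and satisfies \eqref{gamma-infinity} with $b = 0$ and decay rate $\eta$. A direct application of Lemma \ref{lem-3.5}(ii) then gives
\[
\sum_{j \ne j_0} e^{-\eta |Q_j - Q_{j_0}|} \le C e^{-\eta \rho}.
\]

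The threshold $\rho_0$ can be taken to be the larger of the two thresholds provided by Lemma \ref{lem-3.5} for these two choices of $\varGamma$, and the constants are independent of $K$ for the same reason. The only minor items to verify are the monotonicity of $w$ and the fit of its tail with \eqref{gamma-infinity}, both of which are immediate from Lemma \ref{lem-w}; hence I do not anticipate any substantive obstacle in this argument.
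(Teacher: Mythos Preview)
Your proposal is correct and matches the paper's intended approach: the paper presents this result explicitly as ``a simple corollary'' of Lemma~\ref{lem-3.5} without giving a separate proof, and your two applications of that lemma (with $\varGamma=w$ for \eqref{ineq-q} and $\varGamma(r)=e^{-\eta r}$ for \eqref{q-q-K}) are exactly what is meant.
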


To analyze the interactions between spikes, we prove some estimates concerning convolution of functions with suitable exponential decays.

\begin{lemma}\label{lem-acr-07}
Given $\varGamma_1,\varGamma_2$ two positive continuous radial functions on $\mathbb{R}^N$ with the following property:
\begin{align*}
\varGamma_1(r)\sim r^{b_1}e^{-\eta_1r},\ \text{and}\ \varGamma_2(r)\sim r^{b_2}e^{-\eta_2r},\ \text{as}\ r\rightarrow\infty,
\end{align*}
where $b_1,b_2\in\mathbb{R}$, $\eta_1>0$, $\eta_2>0$. Let $\xi\in\mathbb{R}^N$ tends to infinity. Then, the following asymptotic estimates hold:
\begin{enumerate}[(i)]
\item If $\eta_1<\eta_2$, then
\begin{equation*}
\int_{\mathbb{R}^N}\varGamma_1(x-\xi)\varGamma_2(x)\,dx\sim|\xi|^{b_1}e^{-\eta_1|\xi|}.
\end{equation*}
Clearly, if $\eta_1>\eta_2$, a similar expression holds, by replacing $b_1$ and $\eta_1$ with $b_2$ and $\eta_2$.
\item If $\eta_1=\eta_2$, suppose that $b_1\geq b_2$ for simplicity. Then
\begin{align*}
\int_{\mathbb{R}^N}\varGamma_1(x-\xi)\varGamma_2(x)\,dx\sim
\left\{\begin{array}{lll}
|\xi|^{b_1+b_2+\frac{N+1}{2}}e^{-\eta_1|\xi|},\quad&\text{if}\ b_2>-\frac{N+1}{2},
\vspace{2mm}\\
\big(|\xi|^{b_1}\ln|\xi|\big)e^{-\eta_1|\xi|},\quad&\text{if}\ b_2=-\frac{N+1}{2},
\vspace{2mm}\\
|\xi|^{b_1}e^{-\eta_1|\xi|},\quad&\text{if}\ b_2<-\frac{N+1}{2}.
\end{array}
\right.
\end{align*}
\end{enumerate}
\end{lemma}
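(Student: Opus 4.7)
The plan is to analyze the convolution integral by a direct Laplace / saddle-point asymptotic method. By rotational symmetry of the problem I may assume $\xi = |\xi|\mathbf{e}_1$ and write $x = (x_1, x') \in \mathbb{R}\times\mathbb{R}^{N-1}$. The key geometric input is the convexity identity
\[
|x| + |x-\xi| \geq |\xi|,
\]
with equality exactly on the segment $\{(x_1,0):0\leq x_1\leq |\xi|\}$. A second-order expansion near this segment gives
\[
|x|+|x-\xi| = |\xi| + \tfrac{|x'|^2}{2}\bigl(\tfrac{1}{x_1}+\tfrac{1}{|\xi|-x_1}\bigr)+O\bigl(|x'|^4/x_1^3\bigr),
\]
while away from the segment the excess $|x|+|x-\xi|-|\xi|$ is bounded below linearly in the transverse distance.

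For part (i), with $\eta_1<\eta_2$, the factor $e^{-\eta_1|x-\xi|-\eta_2|x|}$ concentrates $x$ near the origin because of the extra gap $e^{-(\eta_2-\eta_1)|x|}$. I split $\mathbb{R}^N = A\cup B$ with $A=\{|x|\leq |\xi|/2\}$. On $A$ the asymptotics of $\varGamma_1$ give $\varGamma_1(x-\xi)=(1+o(1))|\xi|^{b_1}e^{-\eta_1|\xi|}e^{\eta_1 x\cdot\hat\xi}$ uniformly, while $\int_A \varGamma_2(x)e^{\eta_1 x\cdot\hat\xi}dx$ converges to a positive finite constant since $\varGamma_2(x)e^{\eta_1|x|}$ remains integrable (as $\eta_2>\eta_1$). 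On $B$ the integrand is bounded by $C e^{-\eta_1|x-\xi|-\eta_2|x|}$ and one easily extracts an exponentially smaller factor $e^{-\delta|\xi|}$ using the convexity identity together with the gap $\eta_2-\eta_1>0$. Combining these gives the claimed $|\xi|^{b_1}e^{-\eta_1|\xi|}$.

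For part (ii), with $\eta_1=\eta_2=\eta$, the contribution comes from a neighborhood of the entire segment. On $x_1\in(\epsilon|\xi|,(1-\epsilon)|\xi|)$ I substitute the asymptotic profiles of $\varGamma_1,\varGamma_2$ and perform the transverse Gaussian integration using the expansion above, obtaining up to multiplicative constants
\[
e^{-\eta|\xi|}\int_{\epsilon|\xi|}^{(1-\epsilon)|\xi|} x_1^{b_2}(|\xi|-x_1)^{b_1}\Bigl(\tfrac{x_1(|\xi|-x_1)}{|\xi|}\Bigr)^{(N-1)/2}dx_1.
\]
Rescaling $x_1=|\xi|s$ turns this into $|\xi|^{b_1+b_2+(N+1)/2}e^{-\eta|\xi|}$ times the Beta-type integral $\int_\epsilon^{1-\epsilon}s^{b_2+(N-1)/2}(1-s)^{b_1+(N-1)/2}ds$. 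When $b_2>-(N+1)/2$, sending $\epsilon\to 0$ yields a finite Beta value and gives the first alternative. When $b_2=-(N+1)/2$ the integral diverges logarithmically at $s=0$; truncating at $x_1\sim 1$, where the asymptotic form of $\varGamma_2$ ceases to be valid and must be replaced by the bounded profile of $\varGamma_2$ near the origin, produces the extra factor $\ln|\xi|$. When $b_2<-(N+1)/2$ the polynomial divergence at $s=0$ signals that the dominant contribution comes from the boundary region $x_1=O(1)$, where $\varGamma_2(x)$ is merely a bounded integrable profile and $\varGamma_1(x-\xi)=(1+o(1))|\xi|^{b_1}e^{-\eta|\xi|}$, giving the third alternative.

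The main obstacle is the justification of the matching between the bulk saddle region and the two endpoint regions in case (ii), and verifying that the off-segment region contributes strictly lower order in all three sub-cases. The quadratic lower bound displayed above, together with uniform transverse Gaussian tail bounds, provides the right tool; rigorous execution requires careful bookkeeping of constants in the $o(1)$ terms so that the upper and lower matching bounds patch together uniformly in $|\xi|$. Prototypes of this calculation in the symmetric case $\varGamma_1=\varGamma_2$ appear in \cite{KW-00,M-09}, and the method transplants essentially verbatim to the present setting.
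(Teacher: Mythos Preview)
Your proposal is substantively correct and in fact supplies far more detail than the paper itself, which does not give a proof at all: it simply asserts that ``this result follows from the Lebesgue dominated convergence theorem,'' calls the argument standard, and refers to Lemma~3.7 of \cite{ACR-07}. Your dominated-convergence treatment of part~(i) and your Laplace/Beta-integral treatment of part~(ii) are exactly the standard route that reference follows, so the approaches coincide.

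One small imprecision is worth flagging. In part~(i) you write that on $A=\{|x|\le |\xi|/2\}$ one has
\[
\varGamma_1(x-\xi)=(1+o(1))\,|\xi|^{b_1}e^{-\eta_1|\xi|}\,e^{\eta_1 x\cdot\hat\xi}
\]
\emph{uniformly}. This is not uniform: for $|x'|$ comparable to $|\xi|$ the excess $|x-\xi|-(|\xi|-x_1)$ is of order $|\xi|$, so the ratio can be exponentially small. What is true (and sufficient) is that the ratio is \emph{pointwise} convergent and admits the uniform \emph{upper} bound $\varGamma_1(x-\xi)\le C\,|\xi|^{b_1}e^{-\eta_1|\xi|}e^{\eta_1|x|}$ on $A$; since $e^{\eta_1|x|}\varGamma_2(x)$ is integrable when $\eta_2>\eta_1$, Lebesgue dominated convergence then gives the limit. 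This is presumably what you intended, and it is precisely the mechanism the paper invokes. Your own caveat that ``rigorous execution requires careful bookkeeping'' already covers this, but it is worth stating the correct formulation explicitly.
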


\begin{proof}
This result follows from the Lebesgue dominated convergence theorem. The argument is standard and is omitted here, we refer the reader to Lemma 3.7 in \cite{ACR-07} for details.
\end{proof}

By the property of $w$, as a corollary of Lemma \ref{lem-acr-07}, we have the following integral estimates.

\begin{lemma}\label{lem-act}
Suppose that $|Q_j-Q_k|$ is sufficiently large, then the following estimates hold:
\begin{enumerate}[(i)]
\item for every $p>1$,
\begin{equation*}
\int_{\mathbb{R}^N}w_{Q_j}w_{Q_k}^p\,dx=(\gamma_0+o(1))w(|Q_j-Q_k|),
\end{equation*}
where
$\gamma_0=\int_{\mathbb{R}^N}w^p(x)e^{-x_1}\,dx>0$ is a constant;

\item
\begin{equation*}
\int_{\mathbb{R}^N}w_{Q_j}w_{Q_k}\,dx=O\big(e^{-|Q_k-Q_j|}|Q_k-Q_j|^{-(N-3)/2}\big);
\end{equation*}

\item let $\Omega_k=\big\{x\in\mathbb{R}^N\,\big|\,|x-Q_k|=\min\limits_{1\leq j\leq K}|x-Q_j|\big\}$, then
\begin{equation*}
\int_{\Omega_k}w_{Q_j}^pw_{Q_k}\,dx=O\big(e^{-\frac{p+1}{2}|Q_j-Q_k|}|Q_j-Q_k|^{-\frac{N-3}{2}}\big),
\end{equation*}
and
\begin{equation*}
\int_{\Omega_k}w_{Q_j}^2w_{Q_k}^{p-1}\,dx=O\big(e^{-\min\{2,\,\frac{p+1}{2}\}|Q_j-Q_k|}|Q_j-Q_k|^{-\frac{N-3}{2}}\big).
\end{equation*}
\end{enumerate}
\end{lemma}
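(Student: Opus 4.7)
The plan is to reduce each item to the asymptotic $w(r) \sim c_{N,p} r^{-(N-1)/2} e^{-r}$ from Lemma~\ref{lem-w} and to the convolution estimate of Lemma~\ref{lem-acr-07}, exploiting the pointwise comparison $w_{Q_j}(x) \leq w_{Q_k}(x)$ that holds on $\Omega_k$ (since there $|x - Q_j| \geq |x - Q_k|$ and $w$ is radial and decreasing) to handle the restricted integrals in part (iii).

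For (i), I would change variables $y = x - Q_k$ and set $\xi = Q_j - Q_k$, so that the integral becomes $\int_{\mathbb{R}^N} w(y - \xi) w(y)^p \, dy$. Splitting into $|y| \leq |\xi|/2$ and $|y| > |\xi|/2$, the expansion $|y - \xi| = |\xi| - y \cdot \hat{\xi} + O(|y|^2/|\xi|)$ (with $\hat\xi = \xi/|\xi|$) combined with the asymptotic of $w$ gives $w(y - \xi) = w(|\xi|) e^{y \cdot \hat\xi}(1 + o(1))$ uniformly on bounded sets, while the tail $|y| > |\xi|/2$ contributes $O(e^{-p|\xi|/2}) = o(w(|\xi|))$ since $p > 1$. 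Dominated convergence with dominating function $C e^{|y|} w^p(y) \in L^1$, together with the rotational invariance of $w$, yields the leading coefficient $\gamma_0 = \int e^{y_1} w^p \, dy$, which equals $\int e^{-y_1} w^p \, dy$ by evenness.

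Item (ii) is a direct application of Lemma~\ref{lem-acr-07}(ii) with $\Gamma_1 = \Gamma_2 = w$, so $b_1 = b_2 = -(N-1)/2$ and $\eta_1 = \eta_2 = 1$. The inequality $-(N-1)/2 > -(N+1)/2$ places us in the first sub-case, giving the polynomial exponent $b_1 + b_2 + (N+1)/2 = -(N-3)/2$ immediately.

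For (iii), on $\Omega_k$ I rewrite $w_{Q_j}^p w_{Q_k} = w_{Q_j}^{(p-1)/2} \cdot w_{Q_j}^{(p+1)/2} w_{Q_k} \leq w_{Q_j}^{(p+1)/2} w_{Q_k}^{(p+1)/2}$, and then apply Lemma~\ref{lem-acr-07} to the global convolution of $w^{(p+1)/2}$ with itself ($\eta = (p+1)/2$), obtaining the exponential rate $e^{-\frac{p+1}{2}|\xi|}$. For the second integral I split on $p$: if $1 < p \leq 3$, the same trick gives $w_{Q_j}^2 w_{Q_k}^{p-1} \leq (w_{Q_j} w_{Q_k})^{(p+1)/2}$ on $\Omega_k$ (now using $w_{Q_j}^{(3-p)/2} \leq w_{Q_k}^{(3-p)/2}$), producing rate $e^{-\frac{p+1}{2}|\xi|}$; if $p \geq 3$, I use $w_{Q_k}^{p-1} \leq \|w\|_\infty^{p-3} w_{Q_k}^2$ and bound $\int w_{Q_j}^2 w_{Q_k}^2 \, dx$ via Lemma~\ref{lem-acr-07} with $\eta = 2$, yielding $e^{-2|\xi|}$. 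Combining the two cases gives the claimed exponential factor $e^{-\min\{2, (p+1)/2\}|\xi|}$. The one mild subtlety I foresee is matching the polynomial weight $|\xi|^{-(N-3)/2}$: the exponent produced by Lemma~\ref{lem-acr-07} is in general smaller (more negative), but since the statement is an $O$-bound and $|\xi|^a = O(|\xi|^{-(N-3)/2})$ whenever $a \leq -(N-3)/2$ and $|\xi|$ is large, this is harmless.
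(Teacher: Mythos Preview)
Your approach matches the paper's: part (i) via dominated convergence and the asymptotics of $w$, part (ii) directly from Lemma~\ref{lem-acr-07}, and part (iii) via the pointwise comparison $w_{Q_j}\le w_{Q_k}$ on $\Omega_k$ followed by Lemma~\ref{lem-acr-07}. Your treatment of (iii) by splitting on $p\lessgtr 3$ is equivalent to the paper's one-line bound $w_{Q_j}^2 w_{Q_k}^{p-1}\le C\,w_{Q_j}^{\min\{2,(p+1)/2\}}w_{Q_k}^{\min\{2,(p+1)/2\}}$, and your remark about the polynomial weight is correct.

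There is one small slip in (i): the tail estimate ``$\int_{|y|>|\xi|/2} w(y-\xi)w^p(y)\,dy=O(e^{-p|\xi|/2})=o(w(|\xi|))$'' is only valid when $p>2$, since $w(|\xi|)\sim c\,|\xi|^{-(N-1)/2}e^{-|\xi|}$ and one needs $p/2>1$ for the exponential to win. For $1<p\le 2$ the argument as written fails. The fix is immediate: split instead at $|y|\le (1-\delta)|\xi|$ with $0<\delta<(p-1)/p$. On the inner region the bound $w(y-\xi)/w(|\xi|)\le C_\delta\,e^{|y|}$ still holds (since $|y-\xi|\ge \delta|\xi|$), so the same dominating function $C\,e^{|y|}w^p(y)\in L^1$ works; on the outer region the tail is $O(e^{-p(1-\delta)|\xi|})=o(e^{-|\xi|})$ because $p(1-\delta)>1$. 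With this adjustment your proof is complete and agrees with the paper's sketch.
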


\begin{proof}
Since the argument of proof is somewhat standard, we give only the main ideas of the proof.
\begin{enumerate}[(i)]
\item It follows from Lemma \ref{lem-w} and Lebegue's dominated convergence theorem (see e.g. the arguments used in the proof of Lemma 2.5, in \cite{LNW-07}).

\item By Lemma \ref{lem-w} and a simple computation, we get the estimate from Lemma \ref{lem-acr-07}.

\item By the definition, for all $x\in\Omega_k$, $w_{Q_j}(x)\leq w_{Q_k}(x)$ for every $1\leq j\leq K$. Hence by Lemma~\ref{lem-w} and Lemma~\ref{lem-acr-07}, we have
\begin{equation*}
\int_{\Omega_k}w_{Q_j}^pw_{Q_k}\,dx
\leq\int_{\Omega_k}w_{Q_j}^{\frac{p+1}{2}}w_{Q_k}^{\frac{p+1}{2}}\,dx\leq Ce^{-\frac{p+1}{2}|Q_j-Q_k|}|Q_j-Q_k|^{-\frac{N-3}{2}},
\end{equation*}
and
\begin{align*}
\int_{\Omega_k}w_{Q_j}^2w_{Q_k}^{p-1}\,dx
&\leq C\int_{\Omega_k}w_{Q_j}^{\min\{2,\,\frac{p+1}{2}\}}w_{Q_k}^{\min\{2,\,\frac{p+1}{2}\}}\,dx\\
&\leq Ce^{-\min\{2,\,\frac{p+1}{2}\}|Q_j-Q_k|}|Q_j-Q_k|^{-\frac{N-3}{2}}.
\end{align*}
\end{enumerate}
\end{proof}

Using above integral estimates, we can get the expansion of the energy of approximate solution.
\begin{lemma}\label{lem-energy}
For $K$ sufficiently large, for any $\alpha\in\mathbb{R}$ and $\textbf{q}$ satisfies \eqref{cond-q} we have
\begin{align*}
\mathcal{E}(U)&=KI_0+\big(a_0+o(1)\big)\sum_{j=1}^K|Q_j|^{-m}
-\frac{1}{2}\sum_{i\neq j}\big(\gamma_0+o(1)\big)w(|Q_i-Q_j|)\\
&\quad+O(KR^{-2m})+O\big(Ke^{-\min\{2,\frac{p+1}{2}\}d}d^{-\frac{N-3}{2}}\big),
\end{align*}
where $\gamma_0=\int_{\mathbb{R}^N}w^p(x)e^{-x_1}\,dx$ is a positive constant given in Lemma~\ref{lem-act},
\begin{align*}
I_0=\big(\frac{1}{2}-\frac{1}{p+1}\big)\int_{\mathbb{R}^N}w^{p+1}\,dx,\
\text{and}\
a_0=\frac{a}{2}\int_{\mathbb{R}^N}w^2\,dx.
\end{align*}
\end{lemma}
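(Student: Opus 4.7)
The plan is to decompose
\[
\mathcal{E}(U) = \underbrace{\tfrac12\!\int(|\nabla U|^2+U^2)}_{E_1} \;+\; \underbrace{\tfrac12\!\int(V-1)U^2}_{E_2} \;-\; \underbrace{\tfrac{1}{p+1}\!\int U^{p+1}}_{E_3}
\]
(using $U>0$ so $U_+=U$) and expand each piece via the equation for $w$, the interaction estimates of Lemmas~\ref{lem-w}, \ref{lem-3.5}, \ref{lem-act}, and the summation bounds of Lemma~\ref{lem-N} and Corollary~\ref{lem-u}.

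For $E_1$, I would expand $U=\sum_j w_{Q_j}$ and integrate by parts termwise against $-\Delta w_{Q_j}+w_{Q_j}=w_{Q_j}^p$, reducing $E_1$ to $\tfrac12\sum_{j,k}\int w_{Q_j}^pw_{Q_k}\,dx$; the diagonal equals $\tfrac{K}{2}\int w^{p+1}$ and Lemma~\ref{lem-act}(i) turns the off-diagonal into $\tfrac12\sum_{j\neq k}(\gamma_0+o(1))w(|Q_j-Q_k|)$. For $E_2$, I would split $U^2=\sum_j w_{Q_j}^2+\sum_{j\neq k}w_{Q_j}w_{Q_k}$; the cross pieces, multiplied by the bounded $V-1$, are exponentially small by Lemma~\ref{lem-act}(ii) and summable via Corollary~\ref{lem-u}, while on each diagonal piece I substitute $V(x)-1=a|x|^{-m}+O(|x|^{-m-\sigma})$, change variables $y=x-Q_j$, and Taylor-expand $|y+Q_j|^{-m}$ around $y=0$. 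The first-order term vanishes against the radial $w^2$, leaving $\int(V-1)w_{Q_j}^2\,dx = 2a_0|Q_j|^{-m}+O(|Q_j|^{-m-\min\{2,\sigma\}})$; summation in $j$ using $|Q_j|\sim R$ yields $(a_0+o(1))\sum_j|Q_j|^{-m}$ with the leftover absorbed into the explicit $O(KR^{-2m})$.

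The nonlinear term $E_3$ is the heart of the argument and the step I expect to be the main obstacle. I would partition $\mathbb{R}^N$ into the Voronoi cells $\Omega_k$ and, on each, write $U=w_{Q_k}+\phi_k$ with $\phi_k=\sum_{j\neq k}w_{Q_j}$. The pointwise inequality
\[
\bigl|(a+b)^{p+1}-a^{p+1}-(p+1)a^p b\bigr|\;\leq\; C\bigl(a^{p-1}b^2+b^{p+1}\bigr),\qquad a,b\geq 0,
\]
applied with $a=w_{Q_k}$, $b=\phi_k$ produces the main expansion
\[
\int_{\Omega_k}U^{p+1} = \int_{\Omega_k}w_{Q_k}^{p+1} + (p+1)\sum_{j\neq k}\int_{\Omega_k}w_{Q_k}^pw_{Q_j}\,dx + R_k,
\]
and the delicate task is to bound $R_k$. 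Expanding $\phi_k^2$ produces the critical cross integrals $\int_{\Omega_k}w_{Q_k}^{p-1}w_{Q_j}^2$, which are precisely the objects bounded by the second display of Lemma~\ref{lem-act}(iii) with the characteristic exponent $\min\{2,(p+1)/2\}$; the $\phi_k^{p+1}$ part is controlled by the first display of the same item. Keeping the double sum $\sum_k\sum_{j\neq k}$ linear in $K$ is exactly the role of Corollary~\ref{lem-u}, which trades the double sum for a geometric series in the nearest-neighbour separation and delivers the $O(Ke^{-\min\{2,(p+1)/2\}d}d^{-(N-3)/2})$ bound. Extending $\int_{\Omega_k}w_{Q_k}^{p+1}$ and $\int_{\Omega_k}w_{Q_k}^pw_{Q_j}$ from $\Omega_k$ to all of $\mathbb{R}^N$ only costs exponentially small corrections (again by Lemmas~\ref{lem-3.5}, \ref{lem-act}), after which Lemma~\ref{lem-act}(i) identifies the cross-term sum as $\sum_{j\neq k}(\gamma_0+o(1))w(|Q_j-Q_k|)$.

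Combining the three pieces, the diagonals of $E_1$ and $-E_3$ merge into $K(\tfrac12-\tfrac1{p+1})\int w^{p+1}=KI_0$, the off-diagonals combine with coefficients $\tfrac12$ and $-1$ to give $-\tfrac12\sum_{i\neq j}(\gamma_0+o(1))w(|Q_i-Q_j|)$, $E_2$ contributes $(a_0+o(1))\sum_j|Q_j|^{-m}$, and the various remainders collect into the stated $O(KR^{-2m})+O(Ke^{-\min\{2,(p+1)/2\}d}d^{-(N-3)/2})$, which completes the proof.
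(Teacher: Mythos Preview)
Your argument is correct and follows essentially the same route as the paper's proof in Appendix~B; the paper groups the energy as $\sum_j\mathcal{E}(w_{Q_j})$ plus quadratic cross terms plus the nonlinear defect $\frac{1}{p+1}\int(\sum w_{Q_j}^{p+1}-U^{p+1})$, but this is just a different bookkeeping of your $E_1+E_2-E_3$, and the key estimates (Lemma~\ref{lem-act}(i)--(iii), the Voronoi decomposition, Corollary~\ref{lem-u}) are invoked in the same places.

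One point to tighten: for the cross terms in $E_2$, using only that $V-1$ is \emph{bounded} and Lemma~\ref{lem-act}(ii) gives an error of order $Ke^{-d}d^{-(N-3)/2}$, which is larger than both $KR^{-2m}$ and the main interaction sum, so it cannot be absorbed as stated. You need to split $\{|x|<R/3\}$ (where both $w_{Q_j},w_{Q_k}$ are $O(e^{-R/2})$) from $\{|x|\geq R/3\}$ (where $|V-1|\leq CR^{-m}$ by \eqref{cond-wy}); this extra factor $R^{-m}$ is what produces the $O(KR^{-2m})$ bound, exactly as in the paper's Claim~2.
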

\begin{proof}
The proof is delayed to Appendix B.
\end{proof}


\section{The Lyapunov-Schmidt reduction}

The aim of this section is to achieve Step 1 in the procedure of our construction described in Section 2.

Before stating the main result, we first introduce some notation. Let $\eta\in(0,1)$ be a constant chosen later, we define the weighted norm:
\begin{equation}\label{norm**}
\|h\|_{**}=\sup_{x\in\mathbb{R}^N}\Big|\big(\sum_{j=1}^Ke^{-\eta|x-Q_j|}\big)^{-1}\,h(x)\Big|,
\end{equation}
where $Q_j$ is defined in \eqref{eq-Q}. In  what follows, we assume that $(Q_1,\dots,Q_K)\in\Lambda_K$, i.e., the parameter $\textbf{q}$ satisfies \eqref{cond-q}.

We first claim that
\begin{equation}\label{norms-**-q}
\|h\|_{L^\infty(\mathbb{R}^N)}
\leq C\|h\|_{**}\ \text{and}\
\|h\|_{L^q(\mathbb{R}^N)}
\leq CK\|h\|_{**}\ \text{for}\ 1\leq q<\infty.
\end{equation}
Indeed, the second inequality in \eqref{norms-**-q} follows directly from
\begin{equation*}
|h(x)|\leq\|h\|_{**}\sum_{j=1}^Ke^{-\eta|x-Q_j|},\ \forall x\in\mathbb{R}^N.
\end{equation*}
To prove the first inequality in \eqref{norms-**-q}, it suffices to show that $\sum_{j=1}^Ke^{-\eta|x-Q_j|}\leq C$. Indeed, for any $x\in\mathbb{R}^N$, we can choose $\ell\in\mathbb{N}_+$ and $j_0\in\{1,\dots,K\}$ such that $x\in\Omega_{j_0}^\ell\setminus\Omega_{j_0}^{\ell-1}$. Hence by Lemma \ref{lem-3.5},
\begin{equation}\label{eq-W-b}
0<\sum_{j=1}^Ke^{-\eta|x-Q_j|}\leq C\ell^{N-1}e^{-\eta(\ell-1)\rho/2}\leq C.
\end{equation}

Denote $\mathcal{B}_{**}=\big\{h\in L^\infty(\mathbb{R}^N)\,\big|\,\|h\|_{**}<\infty\big\}$. Then $\mathcal{B}_{**}$ is a Banach space with the norm $\|h\|_{**}$. To show the completeness, suppose that $\{h_n\}$ is a Cauchy sequence in $\mathcal{B}_{**}$. By \eqref{norms-**-q}, $\{h_n\}$ is also a Cauchy sequence in $L^\infty(\mathbb{R}^N)$. Hence $h_n$ converges to a function $h_\infty$ in $L^\infty(\mathbb{R}^N)$. By the definition of Cauchy sequence, for any $\varepsilon>0$, there is $n_0\in\mathbb{N}$ such that
\begin{equation*}
|h_n(x)-h_k(x)|\Big(\sum_{j=1}^Ke^{-\eta|x-Q_j|}\Big)^{-1}\leq\|h_n-h_k\|_{**}
<\varepsilon,\
\forall\,x\in\mathbb{R}^N,\ \text{if}\ n,k\geq n_0.
\end{equation*}
Letting $k\rightarrow\infty$, we get
\begin{equation*}
|h_n(x)-h_\infty(x)|\Big(\sum_{j=1}^Ke^{-\eta|x-Q_j|}\Big)^{-1}
<\varepsilon,\
\forall\,x\in\mathbb{R}^N,\ \text{if}\ n\geq n_0,
\end{equation*}
which implies that $\|h_n-h_\infty\|_{**}\rightarrow0$ as $n\rightarrow\infty$.

Now we can state our main result in this section.

\begin{proposition}\label{prop-3-1}
Suppose that $V(x)$ satisfies \eqref{cond-wy} for constants $V_\infty>0$, $a\in\mathbb{R}$, $m>0$ and $\sigma>0$. If $N\geq3$, we further assume \eqref{cond-even-1}. Then there is a positive integer $K_0$ such that: for all $K\geq K_0$, every $\alpha\in\mathbb{R}$, and $\textbf{q}$ satisfies \eqref{cond-q}, there exists a unique function $\phi\in W^{2,2}(\mathbb{R}^N)\cap\mathcal{B}_K$ and a unique multiplier $\widehat{\beta}\in\mathbb{R}^{2K}$ such that
\begin{equation}\label{eq-u-1}
\begin{cases}
L[\phi]+E+N(\phi)=\widehat{\beta}\cdot\frac{\partial U}{\partial\textbf{q}},
\vspace{1mm}\\
\int_{\mathbb{R}^N}\phi\,\mathcal{Z}_{Q_j}\,dx=0,\ \forall\,j=1,\dots,K,
\end{cases}
\end{equation}
where
\begin{align*}
\mathcal{B}_K=\left\{\phi\in L^\infty(\mathbb{R}^N)\,:\,\|\phi\|_{**}\leq C_0K^{-\min\{1,\frac{p-\eta}{2}\}m}(\ln K)^{-\frac{1}{2}}\right\}.
\end{align*}
Here $C_0$ is a positive constant independent of $K$. Moreover, $(\alpha,\textbf{q})\mapsto\phi(x;\alpha,\textbf{q})$ is of class $C^1$, and
\begin{equation*}
R^{-1}\|\frac{\partial \phi}{\partial\alpha}\|_{**}+\|\frac{\partial \phi}{\partial\textbf{q}}\|_{**}
\leq C\big(K^{-\min\{1,\frac{p-\eta}{2}\}m}(\ln K)^{-\frac{1}{2}}\big)^{\min\{p-1,1\}}.
\end{equation*}

\end{proposition}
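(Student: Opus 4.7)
The plan is to set up Proposition \ref{prop-3-1} as a fixed point problem for the mapping $\mathcal{T}(\phi) = -L_\perp^{-1}\bigl(E+N(\phi)\bigr)$, where $L_\perp^{-1}$ denotes the right inverse of $L$ acting on the space of functions orthogonal to $\{\mathcal{Z}_{Q_j}\}$ (modulo the Lagrange multipliers $\widehat{\beta}$). Before inverting, I would first estimate the error $E=-\Delta U+V(x)U-U^p$ in the weighted norm $\|\cdot\|_{**}$. Writing $E=\sum_{j}(V-1)w_{Q_j}+\bigl(\sum_j w_{Q_j}^p-U^p\bigr)$, the first piece is handled by the decay assumption \eqref{cond-wy}: on the region $\Omega_j^1$ where $w_{Q_j}$ dominates, $|V(x)-1|\lesssim |Q_j|^{-m}\sim (K\ln K)^{-m}$ and $w_{Q_j}(x)\lesssim e^{-\eta|x-Q_j|}\cdot e^{-(1-\eta)|x-Q_j|}$. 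The nonlinear piece is controlled by Lemma~\ref{lem-act} and Corollary~\ref{lem-u}, giving an interaction contribution of order $w(d)^{\min\{1,p-\eta\}}$. Balancing the two contributions with the choice of $d,R$ from Lemma~\ref{lem-d-R} yields
\[
\|E\|_{**}\;\le\; C\,K^{-\min\{1,\frac{p-\eta}{2}\}m}(\ln K)^{-1/2},
\]
matching the size of the ball $\mathcal{B}_K$.

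The main obstacle, which I would address next, is a uniform-in-$K$ a priori estimate for the linearized operator $L$ restricted to the orthogonality conditions. Concretely I would prove that if $L[\phi]=h+\widehat{\beta}\cdot\partial_\textbf{q}U$ with $\int\phi\,\mathcal{Z}_{Q_j}=0$, then $\|\phi\|_{**}\le C\|h\|_{**}$ with $C$ independent of $K$. The argument is by contradiction: assume sequences $K_n\to\infty$, $\phi_n$ with $\|\phi_n\|_{**}=1$ and $\|h_n\|_{**}\to 0$. Because $|\phi_n|\le \sum_j e^{-\eta|x-Q_j|}$, and by \eqref{eq-W-b} the right-hand side is uniformly bounded, there exists $j_n$ and $x_n\in B_{\rho/2}(Q_{j_n})$ along which $|\phi_n(x_n)|$ stays away from zero. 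Translating by $-Q_{j_n}$, elliptic regularity plus Corollary~\ref{lem-u} let me extract a limit $\phi_\infty\in L^\infty(\mathbb{R}^N)$ solving $-\Delta\phi_\infty+\phi_\infty-pw^{p-1}\phi_\infty=0$ with $\int\phi_\infty\nabla w=0$. By the nondegeneracy in Lemma~\ref{lem-w}, $\phi_\infty\in\mathrm{Span}\{\partial_{x_i}w\}$, and the orthogonality forces $\phi_\infty\equiv 0$, contradicting $|\phi_\infty(\lim(x_n-Q_{j_n}))|>0$. The multipliers $\widehat{\beta}$ are recovered by testing \eqref{eq-u-1} against $\partial U/\partial q_k$ and invoking the near-diagonality of the Gram matrix $\bigl(\int \partial_{q_k}U\cdot\mathcal{Z}_{Q_j}\bigr)$, which is dominated by $\int|\nabla w|^2$ on the diagonal with exponentially small off-diagonal entries by Lemma~\ref{lem-act} (this exploits the symmetry assumption \eqref{cond-even-11} so that normal/tangential variations in the plane provide precisely the $2K$ matched conditions).

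With the linear theory in hand, the nonlinear term satisfies $\|N(\phi_1)-N(\phi_2)\|_{**}\le C\bigl(\|\phi_1\|_{**}+\|\phi_2\|_{**}\bigr)^{\min\{p-1,1\}}\|\phi_1-\phi_2\|_{**}$ by a pointwise Taylor expansion of $t\mapsto(U+t)_+^p$ combined with the bound $\sum_j e^{-\eta|x-Q_j|}\le C$ from \eqref{eq-W-b}. Because the radius of $\mathcal{B}_K$ is $o(1)$, $\mathcal{T}$ is a contraction on $\mathcal{B}_K$ for $K$ large, yielding the unique $\phi$ and $\widehat{\beta}$. Finally, $C^1$ dependence on $(\alpha,\textbf{q})$ follows from the implicit function theorem applied to the map
\[
(\phi,\widehat{\beta};\alpha,\textbf{q})\;\longmapsto\;\bigl(L[\phi]+E+N(\phi)-\widehat{\beta}\cdot\partial_\textbf{q}U,\;\{\textstyle\int\phi\,\mathcal{Z}_{Q_j}\}\bigr),
\]
whose partial Fréchet derivative in $(\phi,\widehat{\beta})$ is exactly the invertible linear operator analyzed above. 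The stated bounds on $\partial_\alpha\phi$ and $\partial_\textbf{q}\phi$ come from differentiating the error and projection equations in $\alpha$ and $\textbf{q}$: since $\partial U/\partial\alpha=\sum_j R\,\vec{t}_j\cdot\nabla w_{Q_j}+O(1)$ has size $O(R)$ while each $\partial U/\partial q_k$ is $O(1)$, the $\alpha$-derivative inherits the extra factor $R\sim K\ln K$, producing the weight $R^{-1}$ in the final estimate.
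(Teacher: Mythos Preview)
Your proposal is correct and follows essentially the same route as the paper: estimate $\|E\|_{**}$ (the paper's Lemma~\ref{lem-error}), prove a uniform linear estimate by contradiction using nondegeneracy (Lemma~\ref{lem-3-1}), invert the Gram matrix of $\partial_{q_k}U$ (Lemma~\ref{lem-M}), run a contraction on $\mathcal{B}_K$, and obtain $C^1$ dependence via the implicit function theorem. One technical point you gloss over but the paper makes explicit: in the contradiction argument, the fact that $\|\phi_n\|_{**}=1$ does not by itself force $|\phi_n|$ to stay bounded below near some $Q_{j_n}$, since the supremum defining $\|\cdot\|_{**}$ could in principle be nearly attained far from all spikes where the weight is tiny; the paper handles this with a barrier/maximum-principle step (the functions $W_\pm=\sum_j e^{\pm\eta|x-Q_j|}$ and the choice of a fixed radius $\tau$ independent of $K$, cf.\ \eqref{pe}--\eqref{pe-2}) showing that $\|\phi\|_{**}$ is controlled by $\|L[\phi]\|_{**}+\sup_j\|\phi\|_{L^\infty(B(Q_j,\tau))}$, which then localizes the mass.
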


The proof of Proposition~\ref{prop-3-1} is somewhat standard and can be divided into two steps:
\begin{enumerate}[(i)]
\item study the invertibility of the linear operator;
\item apply fixed point theorems.
\end{enumerate}

\subsection{Linear analysis}

Let $M$ denotes an $2K\times2K$ matrix defined by
\begin{equation}\label{matrix-M}
M_{jk}=\int_{\mathbb{R}^N}\frac{\partial U}{\partial q_j}\frac{\partial U}{\partial q_k}\,dx,\ \forall\,j,k=1,\dots,2K.
\end{equation}

\begin{lemma}\label{lem-M}
For $K$ sufficiently large, given any vector $\vec{b}\in\mathbb{R}^{2K}$, there exists a unique vector $\widehat{\beta}\in\mathbb{R}^{2K}$ such that $M\widehat{\beta}=\vec{b}$. Moreover,
\begin{equation}\label{eq-M-b}
\|\widehat{\beta}\|_\infty\leq C\|\vec{b}\|_\infty,
\end{equation}
for some constant $C$ independent of $K$.
\end{lemma}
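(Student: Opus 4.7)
\medskip

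\noindent\textbf{Proof proposal for Lemma \ref{lem-M}.}

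The plan is to show that $M$ is a small perturbation of $c_\ast I_{2K\times 2K}$ for a fixed constant $c_\ast>0$, from which invertibility and the bound \eqref{eq-M-b} follow by a straightforward Neumann series / diagonal dominance argument. There are three quantities to control: the diagonal entries, the ``paired'' cross terms where the normal and tangential derivatives sit at the same spike, and the genuinely off-diagonal entries coming from distinct spikes.

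First I would compute the diagonal. By the formula for $\partial U/\partial \textbf{q}$ recorded just before \eqref{eq-u-1-a}, every diagonal entry of $M$ has the form
\[
M_{jj}\;=\;\int_{\mathbb{R}^N}\bigl(\nabla w(x-Q_j)\cdot \vec v_j\bigr)^{2}\,dx
\]
with $\vec v_j$ a unit vector ($\vec n_j$ or $\vec t_j$). Since $w$ is radial, a change of variables $y=x-Q_j$ followed by a rotation sending $\vec v_j$ to $e_1$ shows
\[
M_{jj}\;=\;c_\ast\;:=\;\int_{\mathbb{R}^N}(\partial_{x_1}w)^{2}\,dx\;>\;0,
\]
independent of $j$ and of $\alpha,\textbf{q}$. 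Next, for the ``same spike'' pair $(j,K+j)$, the two factors are $\nabla w(x-Q_j)\cdot\vec n_j$ and $\nabla w(x-Q_j)\cdot\vec t_j$ with $\vec n_j\perp\vec t_j$; the same change of variables together with the radial symmetry of $w$ forces
\[
M_{j,K+j}\;=\;\int_{\mathbb{R}^N}(\partial_{x_1}w)(\partial_{x_2}w)\,dx\;=\;0.
\]

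The real work is bounding $M_{jk}$ when $j$ and $k$ index different spikes, say with spike indices $i\neq i'$. Using the exponential decay of $\nabla w$ from Lemma~\ref{lem-w}, so that $|\nabla w(y)|\lesssim |y|^{-(N-1)/2}e^{-|y|}$ at infinity, Lemma~\ref{lem-acr-07} yields a bound of the form
\[
|M_{jk}|\;\le\; C\,\Gamma(|Q_i-Q_{i'}|),\qquad \Gamma(r)\sim r^{b}e^{-r},
\]
for an explicit $b$. Summing over the column index $k$ using the estimate \eqref{eq-gamma} of Lemma~\ref{lem-3.5} and the bound $\rho\ge d/2\sim \tfrac{m}{2}\ln K$ from Lemma~\ref{lem-d-R} (valid because $(Q_1,\dots,Q_K)\in\Lambda_K$), I obtain
\[
\sum_{k\neq j}|M_{jk}|\;\le\; C\,\Gamma(\rho)\;\le\; C\,K^{-m+o(1)}\;\longrightarrow\;0\qquad(K\to\infty).
\]
Hence, setting $E:=M-c_\ast I$, the operator norm $\|E\|_\infty=\max_j\sum_{k\ne j}|M_{jk}|\to 0$.

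Finally, for $K\ge K_0$ large enough, $\|E\|_\infty\le c_\ast/2$, so $M=c_\ast(I+c_\ast^{-1}E)$ is invertible by the Neumann series, with
\[
\|M^{-1}\|_\infty\;\le\;\frac{1}{c_\ast-\|E\|_\infty}\;\le\;\frac{2}{c_\ast}.
\]
Applied to $\widehat\beta=M^{-1}\vec b$ this gives $\|\widehat\beta\|_\infty\le (2/c_\ast)\|\vec b\|_\infty$, which is \eqref{eq-M-b}. The only nontrivial step is the accounting in the off-diagonal sum, but thanks to the exponential decay of $\nabla w$ and the geometric packing bound \eqref{Q-l-2}, the natural factor of $K$ from counting the entries in a row is absorbed into the geometric series and produces a single copy of $\Gamma(\rho)$, not $K\,\Gamma(\rho)$; no restriction on $m$ beyond $m>0$ is needed at this stage.
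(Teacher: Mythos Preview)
Your proof is correct and follows essentially the same diagonal-dominance strategy as the paper: identify $M_{jj}=c_\ast>0$, bound the off-diagonal entries by an exponentially small quantity, and conclude invertibility with the $\ell^\infty$ bound. The paper's argument is slightly cruder in that it simply multiplies the uniform off-diagonal bound $|M_{jk}|\le C e^{-d}d^{-(N-3)/2}$ by the $2K$ entries in a row rather than invoking \eqref{eq-gamma}, and it does not single out the same-spike cross term $M_{j,K+j}=0$; your use of Lemma~\ref{lem-3.5} is a harmless refinement that removes the spurious factor of $K$ (and hence any implicit reliance on $m>1$ at this step).
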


\begin{proof}
To prove the existence, it is sufficient to prove the a priori estimate~\eqref{eq-M-b}. Suppose that $|\widehat{\beta}_j|=\|\widehat{\beta}\|_\infty$, by the definition, we have
\begin{equation}\label{eq-lem-M}
\sum_{k=1}^KM_{jk}\widehat{\beta}_k=b_j.
\end{equation}
For the entries $M_{jk}$, by Lemma~\ref{lem-d-R} and Lemma~\ref{lem-act}, we get
\begin{equation}\label{eq-lem-M-1}
|M_{jk}|\leq Ce^{-d}d^{-\frac{N-3}{2}}\leq CK^{-m}(m\ln K)^{-m},\ \forall\,k\neq j,
\end{equation}
and
\begin{equation}\label{eq-lem-M-2}
M_{jj}=\int_{\mathbb{R}^N}\big(\frac{\partial w}{\partial x_1}\big)^2\,dx=c_0>0,\ \forall\,j=1,\dots,2K.
\end{equation}
Hence by \eqref{eq-lem-M}-\eqref{eq-lem-M-2}, for $K$ sufficiently large, we have
\begin{equation*}
c_0\|\widehat{\beta}\|_\infty
\leq c_0|\widehat{\beta}_j|
\leq\sum_{k\neq j}|M_{jk}||\widehat{\beta}_k|+|b_j|
\leq\frac{c_0}{2}\|\widehat{\beta}\|_\infty+\|\vec{b}\|_\infty,
\end{equation*}
from which the desired result follows.
\end{proof}

We can now formulate our main result in this subsection.

\begin{lemma}\label{lem-3-1}
Under the assumption of Proposition \ref{prop-3-1}, there is a positive integer $K_0$ such that: for all $K\geq K_0$, every $\alpha\in\mathbb{R}$, and $\textbf{q}$ satisfies \eqref{cond-q}, and for all $h\in\mathcal{B}_{**}$, there exists a unique function $\phi\in W^{2,2}(\mathbb{R}^N)\cap\mathcal{B}_{**}$ and a unique multiplier $\widehat{\beta}\in\mathbb{R}^{2K}$ such that
\begin{equation}\label{eq-phi-beta}
\begin{cases}
L[\phi]=h+\widehat{\beta}\cdot\frac{\partial U}{\partial\textbf{q}},
\vspace{1mm}\\
\int_{\mathbb{R}^N}\phi\,\mathcal{Z}_{Q_j}\,dx=0,\ \forall\,j=1,\dots,K.
\end{cases}
\end{equation}
Moreover, we have
\begin{equation}\label{estimate}
\|\phi\|_{**}+\|\widehat{\beta}\|_\infty\leq C\|h\|_{**},
\end{equation}
for some positive constant $C$ independent of $K$.
\end{lemma}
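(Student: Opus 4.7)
\medskip
\noindent\textbf{Proof plan for Lemma \ref{lem-3-1}.}

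The plan is to follow the standard blueprint for multi-bump linear theory: first establish the a priori estimate \eqref{estimate}, then obtain existence via a Fredholm/variational argument. Given that we already have Lemma \ref{lem-M} for the inversion of the matrix $M$, the multiplier $\widehat{\beta}$ can be controlled cheaply: testing the first equation in \eqref{eq-phi-beta} against each $\partial U/\partial q_k$ and using the orthogonality $\int \phi\,\mathcal{Z}_{Q_j}\,dx=0$, one verifies that $\int L[\phi]\,\frac{\partial U}{\partial q_k}\,dx$ is $O((\|\phi\|_{**}+\|h\|_{**})\cdot o(1))$ because $L[\partial U/\partial q_k]$ is concentrated where $U$ differs from a single bump (error generated by $V-V_\infty$ and by interactions, both small by Lemma \ref{lem-act} and \eqref{cond-wy}). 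This gives $M\widehat{\beta}=\vec{b}$ with $\|\vec{b}\|_\infty\le C(\|h\|_{**}+o(1)\|\phi\|_{**})$, and Lemma \ref{lem-M} yields $\|\widehat{\beta}\|_\infty\le C\|h\|_{**}+o(1)\|\phi\|_{**}$. Therefore it suffices to prove $\|\phi\|_{**}\le C\|h\|_{**}$.

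For the a priori estimate on $\phi$ I would argue by contradiction: assume there are sequences $K_n\to\infty$, configurations in $\Lambda_{K_n}$, solutions $\phi_n$ with $\|\phi_n\|_{**}=1$, and data with $\|h_n\|_{**}\to 0$ and $\|\widehat{\beta}_n\|_\infty\to 0$ (the latter by the paragraph above, modulo absorbing the $o(1)\|\phi\|_{**}$ term). By the definition of $\|\cdot\|_{**}$, choose $x_n\in\mathbb{R}^N$ realizing the supremum up to $1/2$. The first step is to localize $x_n$: using the weight $W(x)=\sum_j e^{-\eta|x-Q_j|}$, $\eta\in(0,1)$, as a barrier (since $\eta^2<1\le V_\infty$ and $pU^{p-1}$ decays exponentially away from the $Q_j$'s, one has $L[W]\ge \tfrac12(1-\eta^2)W$ outside $\bigcup_j B_{R_0}(Q_j)$ for $R_0$ large), the maximum principle applied to $\phi_n\pm CW\cdot(\|h_n\|_{**}+\text{interior sup})$ shows
\[
|\phi_n(x)|\le C\Big(\|h_n\|_{**}+\max_{j}\sup_{B_{R_0}(Q_j)}|\phi_n|\Big)W(x)
\quad\text{for }x\notin\textstyle\bigcup_j B_{R_0}(Q_j),
\]
forcing $\mathrm{dist}(x_n,\{Q_{j_n}\})\le R_0$ for some $j_n$ (otherwise $\|\phi_n\|_{**}\to 0$). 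Translating $\widetilde\phi_n(y)=\phi_n(y+Q_{j_n})$, standard elliptic estimates together with \eqref{norms-**-q} give local $C^{1,\alpha}$ bounds, so $\widetilde\phi_n\to\phi_\infty$ locally uniformly. Since $V(y+Q_{j_n})\to V_\infty=1$, $U(y+Q_{j_n})\to w(y)$ uniformly on compact sets (other bumps are far away by Corollary \ref{lem-u} and Lemma \ref{lem-d-R}), and the right-hand side tends to $0$, $\phi_\infty$ solves the limit equation
\[
-\Delta\phi_\infty+\phi_\infty-pw^{p-1}\phi_\infty=0,\qquad \int_{\mathbb{R}^N}\phi_\infty\,\nabla w\,dy=0,
\]
with $|\phi_\infty|\le C\,e^{-\eta|y|}$. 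Lemma \ref{lem-w} (non-degeneracy) then forces $\phi_\infty\equiv 0$, contradicting $|\widetilde\phi_n(x_n-Q_{j_n})|\ge 1/(2C)$.

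For existence, with the a priori bound in hand, I would set up the problem variationally on
\[
\mathcal{H}=\Big\{\phi\in H^1(\mathbb{R}^N):\int_{\mathbb{R}^N}\phi\,\mathcal{Z}_{Q_j}\,dx=0,\ j=1,\dots,K\Big\},
\]
endowed with the inner product $\langle\phi,\psi\rangle=\int(\nabla\phi\nabla\psi+V\phi\psi)$. The equation $L[\phi]=h+\widehat\beta\cdot\partial U/\partial\mathbf{q}$ rewrites, via Riesz representation, as $(\mathrm{Id}-\mathcal{K})[\phi]=\widetilde h$ on $\mathcal{H}$, where $\mathcal{K}[\phi]=(-\Delta+V)^{-1}(pU^{p-1}\phi)$ is compact (since $pU^{p-1}$ decays at infinity). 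By Fredholm alternative, existence and uniqueness in $\mathcal{H}$ follows from the a priori estimate (injectivity), and elliptic regularity upgrades $\phi$ to $W^{2,2}$. The $\|\cdot\|_{**}$ bound on $\phi$ then follows automatically from the a priori estimate applied to the solution just produced, completing the proof.

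\textbf{Main obstacle.} The delicate point is the barrier/blow-up argument, specifically verifying that $L[W]\ge c\,W$ outside $\bigcup_j B_{R_0}(Q_j)$ uniformly in $K$. This requires the choice $\eta<\sqrt{V_\infty}=1$ together with the decay bound $\sum_{j\ne j_0}e^{-\eta|x-Q_j|}\le C e^{-\eta|x-Q_{j_0}|}$ from Corollary \ref{lem-u}, and the fact that $pU^{p-1}(x)$ is controlled by $\sum_j w_{Q_j}^{p-1}$, which is pointwise $o(1)$ outside the balls of radius $R_0$. Without the uniform-in-$K$ barrier, the standard one-bump analysis cannot be patched to handle the diverging number of concentration centers.
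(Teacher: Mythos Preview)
Your proposal is correct and follows essentially the same approach as the paper: control of $\widehat\beta$ via Lemma~\ref{lem-M} after testing against $\partial U/\partial q_k$, a barrier/maximum-principle argument with the weight $\sum_j e^{-\eta|x-Q_j|}$ to localize near some $Q_{j_n}$, a blow-up contradiction using the non-degeneracy of $w$, and existence via Fredholm on the orthogonal Hilbert space $\mathcal{H}$. The only minor technical point the paper makes explicit that you gloss over is the use of a \emph{second} barrier $W_+(x)=\sum_j e^{+\eta|x-Q_j|}$ (added with a small coefficient $\delta$ and then sent to zero) to justify the maximum principle on the unbounded exterior domain before one knows $\|\phi\|_{**}<\infty$; this is the standard device to force the comparison at infinity and is easily inserted into your argument.
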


\begin{proof}
To solve \eqref{eq-phi-beta}, we first consider weak solutions. Define
\begin{align*}
\mathcal{H}=\Big\{u\in H^1(\mathbb{R}^N)\,\big|\,\big(u,\,(-\Delta+1)^{-1}\mathcal{Z}_{Q_j}\big)=0,\,\forall\,j=1,\dots,K\Big\}.
\end{align*}
Then $\mathcal{H}$ is a Hilbert space with the standard inner product:
\begin{align*}
(u,v)=\int_{\mathbb{R}^N}(\nabla u\nabla v+uv)\,dx.
\end{align*}
Since the vector function $\mathcal{Z}_{Q_j}$ decays exponentially at infinity, by integration by parts, it is not hard to show that for $\phi\in H^1(\mathbb{R}^N)$, $\phi\in\mathcal{H}$ is equivalent to
\begin{equation*}
\int_{\mathbb{R}^N}\phi\,\mathcal{Z}_{Q_j}\,dx=0,\,\forall\,j=1,\dots,K.
\end{equation*}

As usual, $\phi\in\mathcal{H}$ is a weak solution of \eqref{eq-phi-beta} if and only if it satisfies the following equation:
\begin{align*}
\int_{\mathbb{R}^N}\Big\{\nabla\phi\nabla\varphi+V(x)\phi\varphi-pU^{p-1}\phi\varphi\Big\}\,dx=\int_{\mathbb{R}^N} h\varphi\,dx,\ \forall\,\varphi\in\mathcal{H}.
\end{align*}
By the Riesz representation theorem, the last equation can be written as
\begin{align*}
\phi+\mathcal{K}[\phi]=\widehat{h},
\end{align*}
where $\widehat{h}$ is defined by duality and $\mathcal{K}$ is a linear compact operator due to the exponential decay of $U$ and $|V(x)-1|\leq C|x|^{-m}$ for $|x|$ large. Using the Fredholm alternative, showing that equation \eqref{eq-phi-beta} has a unique weak solution is equivalent to showing that it has a unique solution for $h=0$. Moreover, by \eqref{norms-**-q}, $h\in L^q(\mathbb{R}^N)$ for all $1<q<\infty$. By the standard elliptic regularity results, $\phi\in W^{2,q}(\mathbb{R}^N)$. Hence $\phi$ is a strong solution and $\phi\in L^\infty(\mathbb{R}^N)$ by the Sobolev imbedding theorem. Therefore, to prove Lemma~\ref{lem-3-1}, it is sufficient to prove the a priori estimate \eqref{estimate}.

To prove \eqref{estimate}, we first multiply equation \eqref{eq-phi-beta} by $\frac{\partial U}{\partial\textbf{q}}$ and integrate over $\mathbb{R}^N$ to obtain
\begin{align}\label{eq-beta}
M\widehat{\beta}
=\int_{\mathbb{R}^N}L[\phi]\,\frac{\partial U}{\partial\textbf{q}}\,dx-\int_{\mathbb{R}^N}h\,\frac{\partial U}{\partial\textbf{q}}\,dx,
\end{align}
where $M$ is an $2K\times2K$ matrix defined in \eqref{matrix-M}.

By the integration by parts,
\begin{align*}
\int_{\mathbb{R}^N}L[\phi]\,\mathcal{Z}_{Q_k}\,dx
=\int_{\mathbb{R}^N}\phi\,L[\mathcal{Z}_{Q_k}]\,dx.
\end{align*}
Observe that
\begin{align*}
L[\mathcal{Z}_{Q_k}]
&=\left(V(x)-1\right)\nabla w_{Q_k}-p\big(U^{p-1}-w_{Q_k}^{p-1}\big)\nabla w_{Q_k}.
\end{align*}
We claim that
\begin{align}\label{LZ}
\Big|\int_{\mathbb{R}^N}L[\phi]\,\mathcal{Z}_{Q_k}\,dx\Big|
\leq Cde^{-\min\{1,\frac{p}{2}\}d}\|\phi\|_\infty.
\end{align}
Indeed, on one hand, by the assumption \eqref{cond-wy} and $\phi\in\mathcal{H}$, we have
\begin{align*}
\Big|\int_{\mathbb{R}^N}\left(V(x)-1\right)\nabla w_{Q_k}\phi\,dx\Big|
\leq C(R^{-m-1}\ln K+R^{-m-\sigma})\|\phi\|_\infty.
\end{align*}
On the other hand, by mean value theorem and \eqref{ineq-q}, for $|x-Q_k|<2m\ln K$, we have
\begin{align*}
|U^{p-1}-w_{Q_k}^{p-1}|\leq Cw_{Q_k}^{p-2}\sum_{j\neq k}w_{Q_j}.
\end{align*}
Thus by Lemma~\ref{lem-act},
\begin{align*}
\Big|\int_{\mathbb{R}^N}-p(U^{p-1}-w_{Q_k}^{p-1})\nabla w_{Q_k}\,\phi\,dx\Big|
\leq Cde^{-\min\{1,\frac{p}{2}\}d}\|\phi\|_\infty.
\end{align*}
Combining the above estimates we get \eqref{LZ}.

Since $w$ decays exponentially at infinity, we have
\begin{align}\label{hZ}
\Big|\int_{\mathbb{R}^N}h\mathcal{Z}_{Q_k}\,dx\Big|\leq C\|h\|_{**}.
\end{align}
Combining the above estimates \eqref{LZ} and \eqref{hZ}, by Lemma~\ref{lem-M}, we get
\begin{align}\label{beta}
\|\widehat{\beta}\|_\infty\leq C\left(de^{-\min\{1,\frac{p}{2}\}d}\|\phi\|_\infty+\|h\|_{**}\right).
\end{align}

Now we prove the a priori estimate~\eqref{estimate}. First we show that $\|\phi\|_{**}<\infty$. To prove it, by the maximum principle, we prove that there exist constants $\tau$ and $C$ (all independent of $K$) such that for all $x\in\mathbb{R}^N\setminus\cup_{j=1}^K\,B(Q_j,\tau)$,
\begin{align}\label{pe}
|\phi(x)|\leq C\left(\|L[\phi]\|_{**}+\sup_{1\leq j\leq K}\|\phi\|_{L^\infty(B(Q_j,\tau))}\right)\sum_{j=1}^Ke^{-\eta|x-Q_j|}.
\end{align}

To prove the above pointwise estimate, we first show the independence of $\tau$ on $K$, for $x\in\mathbb{R}^N\setminus\cup_{j=1}^K\,B(Q_j,\tau)$, by Lemma~\ref{lem-N}, we have
\begin{align*}
U(x)
&\leq\sum_{|Q_j-x|<\rho/2}w(x-Q_j)+\sum_{\ell=1}^\infty\sum_{\ell\rho/2\leq|Q_j-x|<(\ell+1)\rho/2}w(x-Q_j)\nonumber\\
&\leq w(\tau)+C\sum_{\ell=1}^\infty \ell^{N-1}e^{-\ell\rho/2}\leq Cw(\tau).
\end{align*}
Thus we can take $\tau$ sufficiently large but independent of $K$ such that
\begin{equation}\label{constant-tau}
pU^{p-1}(x)\leq(V_0-\eta^2)/4,\ \forall\,x\in\mathbb{R}^N\setminus\cup_{j=1}^K\,B(Q_j,\tau).
\end{equation}

Now we claim that for $\tau$ sufficiently large (independent of $K$), in $\mathbb{R}^N\setminus\cup_{j=1}^K\,B(Q_j,\tau)$,
\begin{align*}
L[W_{-}]\geq c_0W_{-},\ \text{and}\ L[W_+]\geq c_0W_+
\end{align*}
where $W_{\pm}(x)=\sum_{j=1}^Ke^{\pm\eta|x-Q_j|}$ and $c_0>0$ is a constant independent of $K$. Indeed, for $x\in\mathbb{R}^N\setminus\cup_{j=1}^K\,B(Q_j,\tau)$,
\begin{align*}
L[W_{\pm}]=\sum_{j=1}^K\Big\{V(x)-\eta^2\mp\frac{N-1}{|x-Q_j|}\eta-pU^{p-1}\Big\}e^{\pm\eta|x-Q_j|}\geq\frac{V_0-\eta^2}{2}W_\pm,
\end{align*}
by the assumption $(V1)$ and inequality~\eqref{constant-tau}.

The remaining part  in the proof of \eqref{pe} is to apply the maximum principle for the linear operator $L$ in $\mathbb{R}^N\setminus\cup_{j=1}^K\,B(Q_j,\tau)$ to obtain
\begin{align*}
|\phi(x)|\leq C\left(\|L[\phi]\|_{**}+\sup_{1\leq j\leq K}\|\phi\|_{L^\infty(B(Q_j,\tau))}\right)\sum_{j=1}^Ke^{-\eta|x-Q_j|}+\delta\sum_{j=1}^Ke^{\eta|x-Q_j|}
\end{align*}
for any $\delta>0$, where $C$ is a constant independent of $K$ and $\delta$. Letting $\delta\rightarrow0$, we get the desired estimate \eqref{pe}. Hence
\begin{equation}\label{pe-2}
\|\phi\|_{**}
\leq C\left(\|L[\phi]\|_{**}+\sup_{1\leq j\leq K}\|\phi\|_{L^\infty(B(Q_j,\tau))}\right)<\infty.
\end{equation}

Now we can prove the a priori estimate \eqref{estimate}. Arguing by contradiction, assume that there is a sequence of $(\phi^{(K)},h^{(K)})$ satisfying \eqref{eq-phi-beta} such that
\begin{align*}
\|\phi^{(K)}\|_{**}=1,\quad\text{and}\quad \|h^{(K)}\|_{**}=o(1),\ \text{as}\ K\rightarrow\infty.
\end{align*}
(For simplicity, in the following we will drop $(K)$ in the superscript) As a consequence of \eqref{beta},
\begin{align*}
|\widehat{\beta}\cdot\frac{\partial U}{\partial\textbf{q}}(x)|
&\leq C\left(de^{-\min\{1,\frac{p}{2}\}d}\|\phi\|_\infty+\|h\|_{**}\right)\sum_{j=1}^Ke^{-\eta|x-Q_j|}.
\end{align*}
Since $\|\phi\|_\infty\leq C\|\phi\|_{**}$ and $\|h\|_{**}=o(1)$, we get $\|L[\phi]\|_{**}=o(1)$. Hence \eqref{pe-2} implies that there exists a subsequence of $Q_j$ such that
\begin{align}\label{Q_j}
\|\phi\|_{L^\infty(B(Q_{j},\tau))}\geq C>0
\end{align}
for some fixed constant $C$ (independent of $K$). Since $\|\phi\|_\infty\leq 1$, by elliptic regularity estimates, we get $\|\phi\|_{C^{1}(\mathbb{R}^N)}\leq C$. Applying Ascoli-Arzela's theorem, one can find a subsequence of $Q_j$ such that $\phi(x+Q_j)$ converge (on compact sets) to $\phi_\infty$. It is not hard to show that $\phi_\infty$ is a bounded (weak and then strong) solution (actually bound by $e^{-\eta|x|}$) of
\begin{equation*}
-\Delta\phi_\infty+\phi_\infty-pw^{p-1}\phi_\infty=0.
\end{equation*}
Furthermore, since $\phi$ satisfies the orthogonality condition $\int_{\mathbb{R}^N}\phi\,\mathcal{Z}_{Q_j}\,dx=0$, the limit function $\phi_\infty$ satisfies $\int_{\mathbb{R}^N}\phi_\infty\nabla w=0$. By the non-degeneracy of $w$, one has $\phi_\infty\equiv0$, which is in contradiction with \eqref{Q_j}. This completes the proof of Lemma \ref{lem-3-1}.
\end{proof}

\begin{remark}
If $V(x)$ is a bounded measurable function such that there is no nontrivial solution of
\begin{equation}
-\Delta\phi+V(x)\phi=0, \ |\phi(x)|\leq Ce^{-\eta|x|}\ \text{in}\ \mathbb{R}^N,
\end{equation}
our arguments still work by adding $0$ to the points $Q_j$'s.
\end{remark}

\begin{remark}
Since the Morse index of $w$ is finite, using a similar argument in the proof of Lemma~\ref{lem-3-1} (cf. \cite{AW-12}), one can show that
\begin{equation}\label{H1}
\|\phi\|_{H^1(\mathbb{R}^N)}\leq C\|h\|_{L^2(\mathbb{R}^N)}
\end{equation}
for some positive constant $C$ independent of $K$. Indeed, since $\tau$ is independent of $K$,
one can first prove that
\begin{equation*}
C\|\phi\|_{H^1(\mathbb{R}^N)}^2
\leq\int_{\mathbb{R}^N}\Big\{|\nabla\phi|^2+V(x)\phi^2-pU^{p-1}\phi^2\Big\}\,dx.
\end{equation*}
\end{remark}

\subsection{Nonlinear analysis}

Summarizing, for any $h\in\mathcal{B}_{**}$, by Lemma \ref{lem-3-1}, there is a unique function $\phi\in\mathcal{H}\cap W^{2,2}(\mathbb{R}^N)\cap\mathcal{B}_{**}$ satisfying \eqref{eq-phi-beta}. Hence we can define a linear operator from $\mathcal{B}_{**}$ to $\mathcal{H}\cap W^{2,2}(\mathbb{R}^N)\cap\mathcal{B}_{**}$ and denote it by $L^{-1}$. Then the equation \eqref{eq-u-1} is equivalent to
\begin{align*}
\phi=-L^{-1}\big[E+N(\phi)\big].
\end{align*}


Before we give the complete proof of Proposition \ref{prop-3-1}, we first show the estimate of the error.

\begin{lemma}\label{lem-error}
Given $(Q_1,\dots,Q_K)\in\Lambda_K$, then for any fixed $0<\eta<1$ and $K$ sufficiently large, there is a constant $C$ (independent of $K$) such that
\begin{equation}\label{eq-error}
\|E\|_{**}\leq CK^{-\min\{1,\frac{p-\eta}{2}\}m}(\ln K)^{-\frac{1}{2}}.
\end{equation}
\end{lemma}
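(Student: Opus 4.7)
The plan is to decompose $E$ using $-\Delta w_{Q_j}+w_{Q_j}=w_{Q_j}^p$:
\[
E=\bigl(V(x)-1\bigr)U+\Bigl(\sum_{j=1}^K w_{Q_j}^p-U^p\Bigr)=:E_{\mathrm{pot}}+E_{\mathrm{int}},
\]
and to estimate each piece in the weighted $\|\cdot\|_{**}$-norm cell-by-cell on the Voronoi partition $\{\Omega_{j_0}\}$ refined by the annuli $\Omega_{j_0}^{\ell}\setminus\Omega_{j_0}^{\ell-1}$, invoking the pointwise tools of Section 3.

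For $E_{\mathrm{pot}}$, I would combine $|V(x)-1|\le C|x|^{-m}\le CR^{-m}$ from \eqref{cond-wy} (valid where $|x-Q_{j_0}|\le|Q_{j_0}|/2$; the complement contributes only exponentially small mass) with Corollary \ref{lem-u}, which gives $U(x)\le C\ell\,w_{Q_{j_0}}(x)\le C\ell e^{-|x-Q_{j_0}|}$ on $\Omega_{j_0}^{\ell}$. Dividing by the lower bound $\sum_k e^{-\eta|x-Q_k|}\ge e^{-\eta|x-Q_{j_0}|}$, the factor $e^{-(1-\eta)|x-Q_{j_0}|}$ absorbs the $\ell$-growth because $|x-Q_{j_0}|\gtrsim(\ell-1)\rho/2$ there, yielding
\[
\|E_{\mathrm{pot}}\|_{**}\le CR^{-m}\lesssim K^{-m}(\ln K)^{-m},
\]
which is strictly inside the target bound.

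The interaction term is the substantive part. On $\Omega_{j_0}$, write $W=\sum_{j\neq j_0}w_{Q_j}$; a mean value expansion of $s\mapsto s^p$ combined with $W\le C\ell\,w_{Q_{j_0}}$ (treating the regimes $W\le w_{Q_{j_0}}$ and $W>w_{Q_{j_0}}$ separately and using $\sum_{j\ne j_0}w_{Q_j}^p\le W^p$) gives the pointwise bound
\[
|E_{\mathrm{int}}(x)|\le C\,\ell^{\,p-1}\,w_{Q_{j_0}}(x)^{p-1}\,W(x).
\]
The key step is then to bound, for each $j\neq j_0$, the ratio $w_{Q_{j_0}}(x)^{p-1}w_{Q_j}(x)/\sum_k e^{-\eta|x-Q_k|}$. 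Replacing $w(r)$ by its leading exponential $Ce^{-r}$ and retaining only the $k=j_0$ term in the denominator reduces the task to maximizing $e^{-(p-1-\eta)a-b}$ subject to $a\le b$ (because $x\in\Omega_{j_0}$) and $a+b\ge|Q_{j_0}-Q_j|=:D$, where $a=|x-Q_{j_0}|$, $b=|x-Q_j|$. A short Lagrange check shows the minimum of $(p-1-\eta)a+b$ equals $\min\{1,(p-\eta)/2\}\,D$, attained at the vertex $(a,b)=(0,D)$ when $p\ge 2+\eta$ and at the midpoint $(a,b)=(D/2,D/2)$ when $p<2+\eta$. This is the precise source of the exponent $\min\{1,(p-\eta)/2\}$ in \eqref{eq-error}, and is the one genuinely delicate step of the proof.

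Finally, summing over $j\neq j_0$ via Lemma \ref{lem-3.5} (the extra exponential decay gained on outer strips absorbs the $\ell^{p-1}$ prefactor) converts the pointwise bound into $Ce^{-\min\{1,(p-\eta)/2\}d}d^{-(N-1)/2}$, where the polynomial prefactor comes from the sharp asymptotics of $w$ in Lemma \ref{lem-w}. Substituting $d=m\ln K+O(\ln\ln K)$ from Lemma \ref{lem-d-R} then gives $\|E_{\mathrm{int}}\|_{**}\le CK^{-\min\{1,(p-\eta)/2\}m}(\ln K)^{-1/2}$; the $(\ln K)^{-1/2}$ factor is essentially sharp for $N=2$ and a slack bound for $N\ge 3$. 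Combined with the potential estimate, this gives \eqref{eq-error}. Everything beyond the optimization highlighted above is bookkeeping of the decay estimates already established in Section 3.
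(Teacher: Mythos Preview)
Your proposal is correct and follows essentially the same route as the paper: the same splitting $E=E_1-E_2$, the same $R^{-m}$ bound for the potential part via the decay hypothesis on $V$, and for the interaction part the same mean-value estimate $|E_2|\lesssim w_{Q_{j_0}}^{p-1}\sum_{k\ne j_0}w_{Q_k}$ followed by exactly the optimization you spell out, yielding the exponent $\min\{1,(p-\eta)/2\}$. The only organizational difference is that the paper fixes a single large $\ell$ (independent of $K$) and disposes of the exterior region $\Omega_{K+1}^\ell$ with a crude $K^{p}e^{-p\ell\rho/2}$ bound, so that $\ell^{p-1}$ is just a constant, whereas you let $\ell$ range over all strips and absorb the $\ell^{p-1}$ factor into the residual exponential decay; both work, but the fixed-$\ell$ version makes the bookkeeping shorter.
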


\begin{proof}
By the definition, we have
\begin{align*}
E
&=\underbrace{\sum_{j=1}^K\left(V(x)-1\right)w_{Q_j}}_{E_1}-\underbrace{\Big\{\Big(\sum_{j=1}^Kw_{Q_j}\Big)^p-\sum_{j=1}^Kw_{Q_j}^p\Big\}}_{E_2}.
\end{align*}

{\bf Claim 1:} There exists a constant $C$ (independent of $K$) such that
\begin{equation}\label{E1}
\|E_1\|_{**}\leq CR^{-m}\leq CK^{-m}(\ln K)^{-m}.
\end{equation}

{\bf{Claim 2:}} There exists a constant $C$ (independent of $K$) such that
\begin{align}\label{E2}
\|E_2\|_{**}&\leq Cd^{-\frac{N-1}{2}}e^{-\min\{1,\frac{p-\eta}{2}\}d}
\leq CK^{-\min\{1,\frac{p-\eta}{2}\}m}(\ln K)^{-\min\{\frac{N-1}{2},m+1\}}.
\end{align}

If both Claim 1 and Claim 2 are true, the desired estimate~\eqref{eq-error} follows.

{\bf Proof of Claim 1:} Note that for $|x|<R/3$, by the triangle inequality, we have
\begin{align*}
|x-Q_j|\geq|Q_j|-|x|\geq R/2.
\end{align*}
Hence for all $|x|<R/3$, by $V\in L^\infty(\mathbb{R}^N)$ and Lemma~\ref{lem-w}, we get
\begin{align*}
|E_1(x)|\leq C\sum_{j=1}^Kw(x-Q_j)
\leq Ce^{-(1-\eta)R/2}\sum_{j=1}^K e^{-\eta|x-Q_j|}
\leq CK^{-m-3}\sum_{j=1}^K e^{-\eta|x-Q_j|}.
\end{align*}

For $|x|\geq R/3$, by the assumption~\eqref{cond-wy}, we have $|V(x)-1|\leq CR^{-m}$. Hence for all $|x|\geq R/3$,
\begin{align*}
|E_1(x)|\leq CR^{-m}\sum_{j=1}^Kw(x-Q_j)
\leq CR^{-m}\sum_{j=1}^K e^{-\eta|x-Q_j|}.
\end{align*}
Combining these estimates, Claim 1 follows.

{\bf{Proof of Claim 2:}}  For $x\in\Omega_{K+1}^\ell$, where $\ell\in\mathbb{N}_+$ is chosen later, we have
\begin{align*}
|E_2(x)|
&\leq K^{p-1}\sum_{j=1}^Kw_{Q_j}^p(x)+\sum_{j=1}^Kw_{Q_j}^p(x)\\
&\leq CK^{p-1}\sum_{j=1}^Ke^{-p|x-Q_j|}
\leq CK^{p-1}e^{-(p-\eta)\ell \rho/2}\sum_{j=1}^Ke^{-\eta|x-Q_j|}.
\end{align*}
Since $\rho>\frac{m}{2}\ln K$, by choosing $\ell>\frac{4(p+m+2)}{m(p-1)}$ (independent of $K$), we have
\begin{align*}
K^{p-1}e^{-(p-\eta)\ell\rho/2}\leq K^{p-1}K^{-(p-\eta)\ell m/4}\leq CK^{-m-3}.
\end{align*}
Thus for all $x\in\Omega_{K+1}^\ell$,
\begin{align*}
|E_2(x)|\leq CK^{-m-3}\sum_{j=1}^Ke^{-\eta|x-Q_j|}.
\end{align*}

By the definition of $\Omega_j^\ell$, $j=1,\dots,K$, $|x-Q_j|\leq|x-Q_k|$ for all $x\in\Omega_j^\ell$ and $1\leq k\leq K$. Thus $|x-Q_k|\geq \frac{\rho}{2}$ for $k\neq j$. Hence
by mean value theorem and \eqref{ineq-q}, for all $x\in\Omega_j^\ell$, we have
\begin{align*}
|E_2(x)|\leq\Big|\Big(\sum_{k=1}^Kw_{Q_k}\Big)^p-w_{Q_j}^p\Big|+\sum_{i\neq j}w_{Q_i}^p
&\leq p\Big(\sum_{k=1}^Kw_{Q_k}\Big)^{p-1}\sum_{k\neq j}w_{Q_k}+\sum_{k\neq j}w_{Q_k}^p\\
&\leq C\ell^{(N-1)(p-1)}w_{Q_j}^{p-1}\sum_{k\neq j}w_{Q_k}.
\end{align*}
Since $\ell$ is independent of $K$, for all $x\in\Omega_j^\ell$, by theorem~\ref{lem-w} and \eqref{q-q-K} we have
\begin{align*}
|E_2(x)|
&\leq Ce^{-(p-1)|x-Q_j|}\sum_{k\neq j}\rho^{-\frac{N-1}{2}}e^{-|x-Q_k|}\\
&\leq C\rho^{-\frac{N-1}{2}}e^{-\eta|x-Q_j|}\sum_{k\neq j}e^{-\min\{1,\frac{p-\eta}{2}\}|Q_j-Q_k|}\\
&\leq Cd^{-\frac{N-1}{2}}e^{-\min\{1,\frac{p-\eta}{2}\}d}e^{-\eta|x-Q_j|}.
\end{align*}
Combining these estimates, Claim 2 follows.
\end{proof}

Now we are in the position to give the proof of Proposition \ref{prop-3-1}.

\begin{proof}[Proof of Proposition \ref{prop-3-1}]
Let $C_0$ be a positive number to be determined later, we define
\begin{align*}
\mathcal{B}_K=\left\{\phi\in L^\infty(\mathbb{R}^N)\,:\,\|\phi\|_{**}\leq C_0K^{-\min\{1,\frac{p-\eta}{2}\}m}(\ln K)^{-\frac{1}{2}}\right\}.
\end{align*}
Then $\mathcal{B}_K$ is a non-empty closed set in $\mathcal{B}_{**}$.  Now we define a map $\mathcal{A}:\mathcal{B}_K\mapsto\mathcal{H}\cap W^{2,2}(\mathbb{R}^N)\cap\mathcal{B}_{**}$ by
\begin{align*}
\mathcal{A}(\phi)=-L^{-1}\big[E+N(\phi)\big].
\end{align*}
Now solving equation \eqref{eq-u-1} is equivalent to finding a fixed point for the map $\mathcal{A}$.

Since $\phi$ is uniformly bounded for $\phi\in\mathcal{B}_K$, by the mean value theorem, there is a positive constant $C$ such that for all $\phi\in\mathcal{B}_K$,
\begin{align*}
|N(\phi)|\leq C|\phi|^{\min\{p,2\}},
\end{align*}
and for all $\phi_1,\phi_2\in\mathcal{B}_K$,
\begin{align*}
|N(\phi_1)-N(\phi_2)|\leq C(|\phi_1|^{\min\{p-1,1\}}+|\phi_2|^{\min\{p-1,1\}})|\phi_1-\phi_2|.
\end{align*}
Thus by \eqref{eq-W-b}, one has
\begin{align*}
\|N(\phi)\|_{**}\leq C\|\phi\|_{**}^{\min\{p,2\}},
\end{align*}
and
we have that
\begin{align*}
\|N(\phi_1)-N(\phi_2)\|_{**}\leq C(\|\phi_1\|_{**}^{\min\{p-1,1\}}+\|\phi_2\|_{**}^{\min\{p-1,1\}})\|\phi_1-\phi_2\|_{**}.
\end{align*}
Hence by Lemma \ref{lem-3-1} and Lemma \ref{lem-error}, for $K$ sufficiently large and $C_0$ large we have
\begin{align*}
\|\mathcal{A}(\phi)\|_{**}\leq C(\|E\|_{**}+\|N(\phi)\|_{**})\leq C_0K^{-\min\{1,\frac{p-\eta}{2}\}m}(\ln K)^{-\frac{1}{2}},
\end{align*}
and
\begin{align*}
\|\mathcal{A}(\phi_1)-\mathcal{A}(\phi_2)\|_{**}\leq C\|N(\phi_1)-N(\phi_2)\|_{**}\leq\frac{1}{2}\|\phi_1-\phi_2\|_{**},
\end{align*}
which shows that $\mathcal{A}$ is a contraction mapping on $\mathcal{B}_K$. Hence there is a unique $\phi\in\mathcal{B}_K$ such that \eqref{eq-u-1} holds.

Now we come to the differentiability of $\phi(x;\alpha,\textbf{q})$ of $(\alpha,\textbf{q})$. Consider the following map $\mathcal{T}:\mathbb{R}\times\mathbb{R}^{2K}\times\mathcal{B}\times\mathbb{R}^{2K}\rightarrow\mathcal{B}\times\mathbb{R}^{2K}$ of class $C^1$:
\begin{align*}
\mathcal{T}(\alpha,\textbf{q},\phi,\widehat{\beta})=\begin{pmatrix}
(-\Delta+1)^{-1}S(U+\phi)-\widehat{\beta}\cdot (-\Delta+1)^{-1}\frac{\partial U}{\partial\textbf{q}}
\vspace{1mm}\\
\int_{\mathbb{R}^N}\phi\,\mathcal{Z}_{Q_1}\,dx\\
\vdots\\
\int_{\mathbb{R}^N}\phi\,\mathcal{Z}_{Q_K}\,dx
\end{pmatrix},
\end{align*}
where $\mathcal{B}=W^{2,2}(\mathbb{R}^N)\cap\mathcal{B}_{**}$.
Equation \eqref{eq-u-1} is equivalent to $\mathcal{T}(\alpha,\textbf{q},\phi,\widehat{\beta})=0$. By the above argument, we know that, given $\alpha\in\mathbb{R}$ and $\textbf{q}$ satisfying \eqref{cond-q}, there is a unique local solution $(\phi(\alpha,\textbf{q}),\widehat{\beta}(\alpha,\textbf{q}))$. For simplicity, in the following, we write $(\phi,\widehat{\beta})=(\phi(\alpha,\textbf{q}),\widehat{\beta}(\alpha,\textbf{q}))$. We claim that the linear operator
\begin{align*}
\frac{\partial\,\mathcal{T}(\alpha,\textbf{q},\phi,\widehat{\beta})}{\partial(\phi,\widehat{\beta})}\Big|_{(\alpha,\textbf{q},\phi,\widehat{\beta})}:\mathcal{B}\times\mathbb{R}^{2K}\rightarrow\mathcal{B}\times\mathbb{R}^{2K}
\end{align*}
is invertible for $K$ large. Then the $C^1$-regularity of $(\alpha,\textbf{q})\mapsto(\phi,\widehat{\beta})$ follows from the Implicit Function Theorem. Indeed we have
\begin{align*}
&\frac{\partial\,\mathcal{T}(\alpha,\textbf{q},\phi,\widehat{\beta})}{\partial(\phi,\widehat{\beta})}\Big|_{(\alpha,\textbf{q},\phi,\widehat{\beta})}[\varphi,\vec{\zeta}]\\
&=\begin{pmatrix}
(-\Delta+1)^{-1}S'(U+\phi)[\varphi]-\vec{\zeta}\cdot (-\Delta+1)^{-1}\frac{\partial U}{\partial\textbf{q}}
\vspace{1mm}\\
\int_{\mathbb{R}^N}\varphi\,\mathcal{Z}_{Q_1}\,dx\\
\vdots\\
\int_{\mathbb{R}^N}\varphi\,\mathcal{Z}_{Q_K}\,dx
\end{pmatrix}.
\end{align*}
Since $\|\phi\|_{**}\leq C_0K^{-\min\{1,\frac{p-\eta}{2}\}m}(\ln K)^{-\frac{1}{2}}$, by Lemma~\ref{lem-M}, the argument in the proof of Lemma \ref{lem-3-1} shows that $\frac{\partial\,\mathcal{T}(\alpha,\textbf{q},\phi,\widehat{\beta})}{\partial(\phi,\widehat{\beta})}\Big|_{(\alpha,\textbf{q},\phi,\widehat{\beta})}
$ is invertible for $K$ sufficiently large. This concludes the proof of Proposition \ref{prop-3-1}.

Next we study the dependence of $\phi$ on $(\alpha,\textbf{q})$. Assume that we have two solutions corresponding to two sets of parameters. One of them denoted by
\begin{align*}
L[\phi]+E+N(\phi)=\widehat{\beta}\cdot\nabla_\textbf{q}U,
\end{align*}
corresponds to the parameters $\alpha$ and $\textbf{q}$; the other denoted by
\begin{align*}
\mathring{L}[\mathring{\phi}]+\mathring{E}+\mathring{N}(\mathring{\phi})=\mathring{\widehat{\beta}}\cdot\mathring{\nabla}_\textbf{q}U,
\end{align*}
corresponds to the parameters $\mathring{\alpha}$ and $\mathring{\textbf{q}}$. Observe that $\phi$ is $L^2$-orthogonal to $\nabla_\textbf{q}U$ while $\mathring{\phi}$ is $L^2$-orthogonal to $\mathring{\nabla}_\textbf{q}U$. To compare $\mathring{\phi}$ with $\phi$, we first choose a vector $\vec{\omega}$ so that
\begin{align*}
\mathring{\phi}_\omega=\mathring{\phi}+\vec{\omega}\cdot\nabla_\textbf{q}U
\end{align*}
satisfies the same orthogonality condition as $\phi$. Moreover, by the equation of $\mathring{\phi}$, the function $\mathring{\phi}_\omega$ satisfies the equation
\begin{align*}
L[\mathring{\phi}_\omega]+(\mathring{L}-L)[\mathring{\phi}]-\vec{\omega}\cdot L[\nabla_\textbf{q}U]+\mathring{E}+\mathring{N}(\mathring{\phi})+\mathring{\widehat{\beta}}\cdot(\nabla_\textbf{q}U-\mathring{\nabla}_\textbf{q}U)=\mathring{\widehat{\beta}}\cdot\nabla_\textbf{q}U.
\end{align*}
Taking the difference with the equation satisfied by $\phi$, we get
\begin{align*}
L[\mathring{\phi}_\omega-\phi]
&=(L-\mathring{L})[\mathring{\phi}]+\vec{\omega}\cdot L[\nabla_\textbf{q}U]
+(E-\mathring{E})+(N(\phi)-\mathring{N}(\mathring{\phi}))\\
&\qquad-\mathring{\widehat{\beta}}\cdot(\nabla_\textbf{q}U-\mathring{\nabla}_\textbf{q}U)
+(\mathring{\widehat{\beta}}-\widehat{\beta})\cdot\nabla_\textbf{q}U.
\end{align*}
Note that by \eqref{eq-Q}, for $j=1,\dots,K$, we have
\begin{align*}
|\mathring{Q}_j-Q_j|\leq C(R|\mathring{\alpha}-\alpha|+\|\mathring{\textbf{q}}-\textbf{q}\|_\infty).
\end{align*}
Assume that $(R|\mathring{\alpha}-\alpha|+\|\mathring{\textbf{q}}-\textbf{q}\|_\infty)\leq1/2$, then we have
\begin{align*}
\|(L-\mathring{L})[\mathring{\phi}]\|_{**}
&\leq CK^{-\min\{1,\frac{p-\eta}{2}\}m}(\ln K)^{-\frac{1}{2}}(R|\mathring{\alpha}-\alpha|+\|\mathring{\textbf{q}}-\textbf{q}\|_\infty),
\end{align*}
\begin{align*}
\|\vec{\omega}\cdot L[\nabla_\textbf{q}U]\|_{**}
\leq CK^{-\min\{1,\frac{p-\eta}{2}\}m}(\ln K)^{-\frac{1}{2}}\|\vec{\omega}\|_\infty,
\end{align*}
\begin{align*}
\|E-\mathring{E}\|_{**}\leq CK^{-\min\{1,\frac{p-\eta}{2}\}m}(\ln K)^{-\frac{1}{2}}(R|\mathring{\alpha}-\alpha|+\|\mathring{\textbf{q}}-\textbf{q}\|_\infty),
\end{align*}
\begin{align*}
\|N(\phi)-\mathring{N}(\mathring{\phi})\|_{**}
&\leq C(\|\phi\|_{**}^{p-1}+\|\mathring{\phi}\|_{**}^{p-1})\|\phi-\mathring{\phi}\|_{**}
+C\|\phi\|_{**}^{\min\{p-1,1\}}(R|\mathring{\alpha}-\alpha|+\|\mathring{\textbf{q}}-\textbf{q}\|_\infty)\\
&\leq CK^{-\min\{1,\frac{p-\eta}{2}\}m(p-1)}(\ln K)^{-\frac{p-1}{2}}\|\phi-\mathring{\phi}\|_{**}\\
&\quad+C\big(K^{-\min\{1,\frac{p-\eta}{2}\}m}(\ln K)^{-\frac{1}{2}}\big)^{\min\{p-1,1\}}(R|\mathring{\alpha}-\alpha|+\|\mathring{\textbf{q}}-\textbf{q}\|_\infty),
\end{align*}
\begin{align*}
\|\mathring{\widehat{\beta}}\cdot(\nabla_\textbf{q}U-\mathring{\nabla}_\textbf{q}U)\|_{**}
&\leq C\|\mathring{\widehat{\beta}}\|_\infty(R|\mathring{\alpha}-\alpha|+\|\mathring{\textbf{q}}-\textbf{q}\|_\infty)\\
&\leq CK^{-\min\{1,\frac{p-\eta}{2}\}m}(\ln K)^{-\frac{1}{2}}(R|\mathring{\alpha}-\alpha|+\|\mathring{\textbf{q}}-\textbf{q}\|_\infty).
\end{align*}
Hence by Lemma~\ref{lem-3-1},
\begin{align*}
\|\mathring{\phi}_\omega-\phi\|_{**}
+\|\mathring{\widehat{\beta}}-\widehat{\beta}\|_\infty
&\leq C\big(K^{-\min\{1,\frac{p-\eta}{2}\}m}(\ln K)^{-\frac{1}{2}}\big)^{\min\{p-1,1\}}(R\|\mathring{\alpha}-\alpha\|_\infty+\|\mathring{q}-q\|_\infty)\\
&\quad+CK^{-\min\{1,\frac{p-\eta}{2}\}m}(\ln K)^{-\frac{1}{2}}\|\vec{\omega}\|_\infty\\
&\quad+CK^{-\min\{1,\frac{p-\eta}{2}\}m(p-1)}(\ln K)^{-\frac{p-1}{2}}\|\mathring{\phi}-\phi\|_{**}.
\end{align*}
On the other hand, by the definition of $\mathring{\phi}_\omega$, we have
\begin{align*}
\|\vec{\omega}\|_\infty
&\leq C\|\mathring{\phi}\|_{**}(R\|\mathring{\alpha}-\alpha\|_\infty+\|\mathring{q}-q\|_\infty)\\
&\leq CK^{-\min\{1,\frac{p-\eta}{2}\}m}(\ln K)^{-\frac{1}{2}}(R\|\mathring{\alpha}-\alpha\|_\infty+\|\mathring{q}-q\|_\infty).
\end{align*}
Hence
\begin{align*}
\|\mathring{\phi}-\phi\|_{**}
+\|\mathring{\widehat{\beta}}-\widehat{\beta}\|_\infty
\leq C\big(K^{-\min\{1,\frac{p-\eta}{2}\}m}(\ln K)^{-\frac{1}{2}}\big)^{\min\{p-1,1\}}(R\|\mathring{\alpha}-\alpha\|_\infty+\|\mathring{q}-q\|_\infty).
\end{align*}
Therefore, we conclude that
\begin{equation*}
R^{-1}\|\frac{\partial \phi}{\partial\alpha}\|_{**}+\|\frac{\partial \phi}{\partial\textbf{q}}\|_{**}
\leq C\big(K^{-\min\{1,\frac{p-\eta}{2}\}m}(\ln K)^{-\frac{1}{2}}\big)^{\min\{p-1,1\}}.
\end{equation*}

\end{proof}


\section{A further reduction process}

The main purpose of this section is to achieve Step 2.A. As explained in Section 2, we define
\begin{equation}
\vec{\beta}=\widehat{\beta}-\gamma(R\textbf{q}_0+\textbf{q}^\perp),\ \text{for every }\gamma\in\mathbb{R}.
\end{equation}
Then equation~\eqref{eq-u-1} becomes
\begin{align}\label{phi-4-2}
L[\phi]+E+N(\phi)=\vec{\beta}\cdot\frac{\partial U}{\partial\textbf{q}}+\gamma\frac{\partial U}{\partial \alpha}.
\end{align}
Note that $\phi$ does not depend on $\gamma$, but $\vec{\beta}$ depends on the parameters $\alpha$, $\textbf{q}$ and $\gamma$ and we write $\vec{\beta}=\vec{\beta}(\alpha,\textbf{q},\gamma)$.

In this section we are going to solve $\vec{\beta}(\alpha,\textbf{q},\gamma)=0$ for each $\alpha\in\mathbb{R}$ by adjusting $\gamma$ and $\textbf{q}$. We multiply \eqref{phi-4-2} by $\frac{\partial U}{\partial\textbf{q}}$ and integrate over $\mathbb{R}^N$ to conclude that
\begin{align*}
\int_{\mathbb{R}^N}(E+L[\phi]+N(\phi))\frac{\partial U}{\partial\textbf{q}}\,dx
=M\vec{\beta}+\gamma\int_{\mathbb{R}^N}\frac{\partial U}{\partial \alpha}\frac{\partial U}{\partial\textbf{q}}\,dx.
\end{align*}
By Lemma~\ref{lem-M}, solving $\vec{\beta}(\alpha,\textbf{q},\gamma)=0$ amounts to solve
\begin{align}\label{beta-1}
\int_{\mathbb{R}^N}(E+L[\phi]+N(\phi))\frac{\partial U}{\partial\textbf{q}}\,dx
=\gamma\int_{\mathbb{R}^N}\frac{\partial U}{\partial \alpha}\frac{\partial U}{\partial\textbf{q}}\,dx.
\end{align}
For this purpose, in the next subsection, we will computer the projection of the error and the projections of the terms involving $\phi$.

\subsection{Projections}

We first compute $\int_{\mathbb{R}^N}E\,\frac{\partial U}{\partial\textbf{q}}\,dx$. Recall that
\begin{align*}
\frac{\partial U}{\partial\textbf{q}}
=-(\mathcal{Z}_{Q_1}\cdot\vec{n}_1,\dots,\mathcal{Z}_{Q_K}\cdot\vec{n}_K,\mathcal{Z}_{Q_1}\cdot\vec{t}_1,\dots,\mathcal{Z}_{Q_K}\cdot\vec{t}_K)^T.
\end{align*}

\begin{lemma}\label{lem-4-1}
Under the assumption of Proposition \ref{prop-3-1}, for sufficiently large $K$, the following expansion holds:
\begin{align*}
\int_{\mathbb{R}^N}E\mathcal{Z}_{Q_k}\,dx
&=a_0m|Q_k|^{-m-1}\frac{Q_k}{|Q_k|}+\sum_{j\neq k}\Psi(|Q_j-Q_k|)\frac{(Q_j-Q_k)}{|Q_j-Q_k|}\\
&\quad+R^{-m-\sigma}\varPi_{k,1}(\alpha,\textbf{q})+R^{-m-3}\varPi_{k,2}(\alpha,\textbf{q})+R^{-2m}\varPi_{k,3}(\alpha,\textbf{q})\\
&\quad+R^{-\min\{2-\eta,\frac{p+1-\eta}{2}\}m}\varPi_{k,4}(\alpha,\textbf{q})
\end{align*}
where $\eta$ is a small positive constant chosen later, $a_0=\frac{a}{2}\int_{\mathbb{R}^N}w^2(x)\,dx$, and $\varPi_{k,l}(\alpha,\textbf{q})$'s are smooth vector valued functions, which are uniformly bounded as $K\rightarrow\infty$.
\end{lemma}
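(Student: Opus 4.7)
The plan is to decompose $E = E_1 - E_2$ where $E_1 := (V(x)-1)\,U$ and $E_2 := U^p - \sum_{j=1}^K w_{Q_j}^p$, project each against $\mathcal{Z}_{Q_k}=\nabla w_{Q_k}$, and isolate the leading $V$-induced contribution $a_0 m |Q_k|^{-m-1}Q_k/|Q_k|$ from $\int E_1\mathcal{Z}_{Q_k}\,dx$ together with the leading nonlinear-interaction contribution $\sum_{j\neq k}\Psi(|Q_j-Q_k|)\,(Q_j-Q_k)/|Q_j-Q_k|$ from $-\int E_2\mathcal{Z}_{Q_k}\,dx$. The four error terms will be quantified by localizing the integrands on the partition $\{\Omega_j^\ell\}$ with a $K$-independent $\ell$, together with the summability estimates of Section~3.

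For $\int E_1\mathcal{Z}_{Q_k}\,dx$ I would first handle the self-contribution $j=k$: changing variables $y=x-Q_k$ and Taylor-expanding the decay profile from \eqref{cond-wy},
\begin{align*}
V(y+Q_k)-1 = \frac{a}{|Q_k|^m} - \frac{a m\,(y\cdot Q_k)}{|Q_k|^{m+2}} + O\!\Big(\frac{|y|^2}{|Q_k|^{m+2}}\Big) + O\!\Big(\frac{1}{|Q_k|^{m+\sigma}}\Big),
\end{align*}
valid on $|y|\leq|Q_k|/2$ (the complementary region being exponentially small due to the decay of $w$). The constant term vanishes against $\int w\nabla w\,dy=0$; the linear term produces the main contribution $a_0 m |Q_k|^{-m-1}Q_k/|Q_k|$ via the identity $\int y_i\,w\partial_j w\,dy = -\tfrac12\delta_{ij}\int w^2$; the quadratic remainder vanishes by parity of $w\nabla w$; the cubic term and the $O(|x|^{-m-\sigma})$ tail respectively supply the contributions $R^{-m-3}\varPi_{k,2}$ and $R^{-m-\sigma}\varPi_{k,1}$. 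For the cross terms $j\neq k$, I would split $\mathbb{R}^N = \{|x|\leq R/3\}\cup\{|x|>R/3\}$: on the inner ball both $w_{Q_j}$ and $\nabla w_{Q_k}$ are $O(e^{-R/2})$, while on the outer set $|V-1|\lesssim R^{-m}$ and Lemma~\ref{lem-act}(ii) supplies an $e^{-|Q_j-Q_k|}$ factor; summing via Corollary~\ref{lem-u} yields the $R^{-2m}\varPi_{k,3}$ contribution.

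For $\int E_2\mathcal{Z}_{Q_k}\,dx$ I would exploit the partition. On $\Omega_k^\ell$, the Taylor expansion $U^p-w_{Q_k}^p = p\,w_{Q_k}^{p-1}\tilde w_k + \mathcal{N}_k$ with $\tilde w_k := \sum_{j\neq k}w_{Q_j}$ is valid, and $|\mathcal{N}_k|\leq C w_{Q_k}^{p-2}\tilde w_k^2 + C\tilde w_k^p$ for $p\geq 2$ (a Hölder-interpolated bound being used for $1<p<2$). The $-w_{Q_k}^p$ piece integrates as an exact divergence; the $-\sum_{j\neq k}w_{Q_j}^p$ pieces restricted to $\Omega_k^\ell$ are bounded pointwise by $e^{-p\rho/2}$. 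The main interaction term is then
\begin{align*}
\sum_{j\neq k}\int_{\Omega_k^\ell}p\,w_{Q_k}^{p-1}w_{Q_j}\nabla w_{Q_k}\,dx,
\end{align*}
which, after extending each integral to $\mathbb{R}^N$ at the cost of an $e^{-p\ell\rho/2}$ error, is evaluated by the exact identity
\begin{align*}
\int_{\mathbb{R}^N}p\,w_{Q_k}^{p-1}w_{Q_j}\nabla w_{Q_k}\,dx = -\!\int_{\mathbb{R}^N}\nabla w_{Q_j}\,w_{Q_k}^p\,dx = -\Psi(|Q_j-Q_k|)\,\frac{Q_j-Q_k}{|Q_j-Q_k|},
\end{align*}
obtained by integration by parts together with the change of variable $y=x-Q_j$ and the rotational symmetry of $w$, which reduces the integral to the definition of $\Psi$. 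The outer regions $\Omega_{j_0}^\ell$ ($j_0\neq k$) are handled by a Taylor expansion of $U^p$ around $Q_{j_0}$: the leading $w_{Q_{j_0}}^p$ and $p\,w_{Q_{j_0}}^{p-1}w_{Q_k}$ pieces cancel in the same manner, leaving a remainder of size $O(e^{-(2-\eta)|Q_{j_0}-Q_k|})$; $\Omega_{K+1}^\ell$ contributes still less. Summing via Corollary~\ref{lem-u} and Lemma~\ref{lem-d-R} converts these exponentials into $R^{-\min\{2-\eta,(p+1-\eta)/2\}m}\varPi_{k,4}$. The identity $\int E = \int E_1 - \int E_2$ finally assembles the stated expansion.

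The main technical obstacle I foresee lies in the subcritical range $1<p<2$, where the quadratic bound $|\mathcal{N}_k|\leq C w_{Q_k}^{p-2}\tilde w_k^2$ fails pointwise (the factor $w_{Q_k}^{p-2}$ is singular as $w_{Q_k}\to 0$) and must be replaced by the Hölder-type estimate $(a+b)^p-a^p-pa^{p-1}b = O\big((a+b)^{p-1-\eta}b^{1+\eta}\big)$ for a small $\eta\in(0,p-1)$. This interpolation trades one power of $\tilde w_k/w_{Q_k}$ for an $\eta$-gain and, when routed through the convolution estimate of Lemma~\ref{lem-acr-07}, produces precisely the $(p+1-\eta)/2$ exponent appearing in the last error term. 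Tracking this gain uniformly across the overlapping exponential profiles requires the $K$-independent bounds provided by Lemma~\ref{lem-3.5} and Corollary~\ref{lem-u}; this is the delicate bookkeeping of the argument, with all other steps being fairly routine given the preparatory lemmas of Section~3.
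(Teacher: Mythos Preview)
Your proposal is correct and follows essentially the same approach as the paper: the same $E=E_1-E_2$ decomposition, the same Taylor expansion of $V$ for the self-term in $E_1$, the same $R^{-2m}$ bound for the cross-terms, and the same partition $\{\Omega_j^\ell\}$ for the nonlinear remainder in $E_2$. The only organizational difference is that the paper extracts the main interaction $I_{21}=\int_{\mathbb{R}^N}p\,w_{Q_k}^{p-1}\tilde w_k\,\nabla w_{Q_k}\,dx$ as a \emph{global} integral from the start and then bounds the remainder $I_{22}$ region by region, rather than localizing to $\Omega_k^\ell$ first and extending afterwards; this makes the treatment of the off-center regions $\Omega_{j_0}^\ell$ ($j_0\neq k$) cleaner than the ``cancel in the same manner'' step you sketch, and it also disposes of your $1<p<2$ concern directly via the localization bound $\tilde w_k\leq C\ell\,w_{Q_k}$ on each $\Omega_{j_0}^\ell$ (so no separate H\"older interpolation is needed).
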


\begin{proof}
By the definition,
\begin{align*}
\int_{\mathbb{R}^N}E\mathcal{Z}_{Q_k}\,dx
&=\underbrace{\sum_{j=1}^K\int_{\mathbb{R}^N}(V(x)-1)w_{Q_j}\nabla w_{Q_k}\,dx}_{I_1}
-\underbrace{\int_{\mathbb{R}^N}\Big\{\Big(\sum_{j=1}^Kw_{Q_j}\Big)^p-\sum_{j=1}^Kw_{Q_j}^p\Big\}\nabla w_{Q_k}\,dx}_{I_2}.
\end{align*}

\noindent{\bf{Claim 1:}} There are smooth vector valued functions $\varPi_{k,1}$, $\varPi_{k,2}$ and $\varPi_{k,3}$ of $\alpha$ and $\textbf{q}$ such that
\begin{align*}
I_{1}&=a_0m|Q_k|^{-m-1}\frac{Q_k}{|Q_k|}
+R^{-m-\sigma}\varPi_{k,1}(\alpha,\textbf{q})+R^{-m-3}\varPi_{k,2}(\alpha,\textbf{q})+R^{-2m}\varPi_{k,3}(\alpha,\textbf{q}).
\end{align*}

\noindent{{\bf Claim 2:}} There exists a smooth vector valued function $\varPi_{k,4}$ of $\alpha$ and $\textbf{q}$ such that
\begin{align*}
I_2=-\sum_{j\neq k}\Psi(|Q_j-Q_k|)\frac{(Q_j-Q_k)}{|Q_j-Q_k|}
-R^{-\min\{2-\eta,\frac{p+1-\eta}{2}\}m}\varPi_{k,4}(\alpha,\textbf{q}),
\end{align*}
where $\eta$ is a small positive constant chosen later.

Combining Claim 1 and Claim 2, we get the desired result. The remainder of this proof will be devoted to the proofs of Claim 1 and Claim 2.

\noindent{\bf{Proof of Claim 1:}}
We divide $I_1$ into two parts:
\begin{align*}
I_1
=\underbrace{\int_{\mathbb{R}^N}(V(x)-1)w_{Q_k}\nabla w_{Q_k}\,dx}_{I_{11}}
+\underbrace{\sum_{j\neq k}\int_{\mathbb{R}^N}(V(x)-1)\,w_{Q_j}\nabla w_{Q_k}\,dx}_{I_{12}}.
\end{align*}
Write $I_{11}=I_{111}+I_{112}$, where
\begin{align*}
I_{111}=\int_{|x|\leq\frac{R}{3}}(V(x)-1)w_{Q_k}\nabla w_{Q_k}\,dx,\ \text{and}\ I_{112}=\int_{|x|>\frac{R}{3}}(V(x)-1)w_{Q_k}\nabla w_{Q_k}\,dx.
\end{align*}
On one hand, since $V\in L^\infty(\mathbb{R}^N)$ and \eqref{eq-rho}, by the triangle inequality and the property of $w$, we get $|I_{111}|\leq Ce^{-R/2}\leq CR^{-m-3}$. On the other hand, by \eqref{cond-wy} and Taylor's theorem,
\begin{align*}
I_{112}&=\int_{|x|>\frac{R}{3}}\frac{a}{|x|^m}w_{Q_k}\nabla w_{Q_k}\,dx+O(R^{-m-\sigma})\\
&=\int_{|y+Q_k|>\frac{R}{3}}\frac{a}{|y+Q_k|^m}w(y)\nabla w(y)\,dy+O(R^{-m-\sigma})\\
&=m|Q_k|^{-m-1}\frac{Q_k}{|Q_k|}\frac{a}{2}\int_{\mathbb{R}^N}w^2(x)\,dx+O(R^{-m-\sigma})+O(R^{-m-3}),
\end{align*}
where in the last equality we use the following identities:
\begin{align*}
\int_{\mathbb{R}^N}w(y)\nabla w(y)\,dy=0,\
\int_{\mathbb{R}^N}y_jy_kw(y)\nabla w(y)\,dy=0,\ \forall\,j,k=1,\dots,N,
\end{align*}
and
\begin{align*}
\int_{\mathbb{R}^N}(y^T\vec{e})w(y)\nabla w(y)\,dy=-\frac{1}{2}\int_{\mathbb{R}^N}w^2\,dy\,\vec{e},\ \forall\,\vec{e}\in\mathbb{R}^N.
\end{align*}
Therefore, we get
\[
I_{11}=a_0m|Q_k|^{-m-1}\frac{Q_k}{|Q_k|}++O(R^{-m-\sigma})+O(R^{-m-3}).
\]
If \eqref{cond-wy} holds in the $C^1$ sense, i.e.,
\[
\nabla V(x)=-\frac{ma}{|x|^{m+1}}\frac{x}{|x|}+O\Big(\frac{1}{|x|^{m+1+\sigma}}\Big),\ \text{as}\ |x|\rightarrow+\infty,
\]
by a similar argument, we get
\[
I_{11}=a_0m|Q_k|^{-m-1}\frac{Q_k}{|Q_k|}++O(R^{-m-1-\sigma})+O(R^{-m-3}).
\]

For the term $I_{12}$, we claim that there exists a constant $C$ (independent of $K$) such that
\begin{align*}
|I_{12}|\leq CR^{-m}e^{-d}d^{-\frac{N-3}{2}}
\leq CR^{-2m}.
\end{align*}
Indeed, by Lemma~\ref{lem-d-R}, Lemma~\ref{lem-3.5} and Lemma~\ref{lem-act}, we have
\begin{align*}
&\Big|\sum_{j\neq k}\int_{\mathbb{R}^N}(V(x)-1)\,w_{Q_j}\nabla w_{Q_k}\,dx\Big|\\
&\leq\Big|\sum_{j\neq k}\int_{|x|<\frac{R}{3}}(V(x)-1)\,w_{Q_j}\nabla w_{Q_k}\,dx\Big|
+\Big|\sum_{j\neq k}\int_{|x|\geq\frac{R}{3}}(V(x)-1)\,w_{Q_j}\nabla w_{Q_k}\,dx\Big|\\
&\leq CKe^{-R/2}+C\sum_{j\neq k}R^{-m}e^{-|Q_j-Q_k|}|Q_j-Q_k|^{-\frac{N-3}{2}}\\
&\leq CR^{-m}e^{-d}d^{-\frac{N-3}{2}}
\leq CR^{-2m}.
\end{align*}

Combining the above estimates, we complete the proof of Claim 1.

\noindent{{\bf Proof of Claim 2:}}
We first divide $I_2$ into two parts:
\begin{align*}
I_2&=\underbrace{\int_{\mathbb{R}^N}pw_{Q_k}^{p-1}\big(\sum_{j\neq k}w_{Q_j}\big)\nabla w_{Q_k}\,dx}_{I_{21}}\\
&\quad+\underbrace{\int_{\mathbb{R}^N}\Big\{\big(\sum_{j=1}^Kw_{Q_j}\big)^p-w_{Q_k}^p-pw_{Q_k}^{p-1}\sum_{j\neq k}w_{Q_j}-\sum_{j\neq k}w_{Q_j}^p\Big\}\nabla w_{Q_k}\,dx}_{I_{22}}.
\end{align*}

By the definition~\eqref{def-Psi} of the interaction function $\Psi$,
\begin{align*}
I_{21}=-\sum_{j\neq k}\Psi(|Q_j-Q_k|)\frac{(Q_j-Q_k)}{|Q_j-Q_k|}.
\end{align*}

For the term $I_{22}$, we divide the domain of integration into $K+1$ parts: $\Omega_1^\ell,\dots,\Omega_{K+1}^\ell$. On $\Omega_{K+1}^\ell$, by taking $\ell$ large but independent of $K$, we have
\begin{align*}
\Big|\big(\sum_{j=1}^Kw_{Q_j}\big)^p-w_{Q_k}^p-pw_{Q_k}^{p-1}\sum_{j\neq k}w_{Q_j}-\sum_{j\neq k}w_{Q_j}^p\Big|\leq CK^{p}e^{-p\ell\rho/2}\leq CK^{-m-3}.
\end{align*}
On $\Omega_l^\ell$, $l=1,\dots,K$, by the definition, we have
\begin{align}\label{eq-5.5}
|x-Q_l|\leq|x-Q_j|,\ \forall\,x\in\Omega_l^\ell,\ \text{and}\ \forall\,1\leq j\leq K.
\end{align}
Since $\ell$ is independent of $K$, for $x\in\Omega_k^\ell$, by mean value theorem and \eqref{ineq-q},
\begin{align*}
\Big|\Big\{\big(\sum_{j=1}^Kw_{Q_j}\big)^p-w_{Q_k}^p-pw_{Q_k}^{p-1}\sum_{j\neq k}w_{Q_j}-\sum_{j\neq k}w_{Q_j}^p\Big\}\nabla w_{Q_k}\Big|
\leq C\Big\{w_{Q_k}^{p-1}\big(\sum_{j\neq k}w_{Q_j}\big)^2+w_{Q_k}\sum_{j\neq k}w_{Q_j}^p\Big\}.
\end{align*}
Note that for $x\in\Omega_k^\ell$, $|x-Q_j|\geq \rho/2$ for all $j\neq k$. Hence by Lemma~\ref{lem-w}, \eqref{eq-5.5} and Lemma~\ref{lem-3.5}, and by choosing $0<\eta<1$ sufficiently small, we have
\begin{align*}
&w_{Q_k}^{p-1}\big(\sum_{j\neq k}w_{Q_j}\big)^2+w_{Q_k}\sum_{j\neq k}w_{Q_j}^p\\
&\leq C\rho^{-\min\{2,p\}\frac{N-1}{2}}\Big\{e^{-(p-1-\eta)|x-Q_k|}\big(\sum_{j\neq k}e^{-|x-Q_j|}\big)^2+e^{-(1-\eta)|x-Q_k|}\sum_{j\neq k}e^{-p|x-Q_j|}\Big\}w_{Q_k}^{\eta}\\
&\leq Ce^{-\min\{2,\frac{p+1-\eta}{2}\}d}d^{-\min\{2,p\}\frac{N-1}{2}}w_{Q_k}^{\eta}.
\end{align*}
Similarly, for all $x\in\Omega_l^\ell$ ($l\neq k$), by mean value theorem, \eqref{ineq-q}, Lemma~\ref{lem-w}, \eqref{eq-5.5} and Lemma~\ref{lem-3.5}, we have
\begin{align*}
\Big|\Big\{\big(\sum_{j=1}^Kw_{Q_j}\big)^p-w_{Q_k}^p-pw_{Q_k}^{p-1}\sum_{j\neq k}w_{Q_j}-\sum_{j\neq k}w_{Q_j}^p\Big\}\nabla w_{Q_k}\Big|
\leq C\Big\{w_{Q_k}w_{Q_l}^{p-1}\sum_{j\neq l}w_{Q_j}+w_{Q_k}^pw_{Q_l}\Big\},
\end{align*}
and
\begin{align*}
&w_{Q_k}w_{Q_l}^{p-1}\sum_{j\neq l}w_{Q_j}+w_{Q_k}^pw_{Q_l}\\
&\leq C\rho^{-\min\{2,p\}\frac{N-1}{2}}\Big\{e^{-(2-\eta)|x-Q_k|}e^{-(p-1)|x-Q_l|}+e^{-(p-\eta)|x-Q_k|}e^{-|x-Q_l|}\\
&\quad\qquad\qquad\qquad\qquad+e^{-(1-\eta)|x-Q_k|}e^{-(p-1)|x-Q_l|}\sum_{j\neq k,l}e^{-|x-Q_j|}\Big\}e^{-\eta|x-Q_k|}\\
&\leq Cd^{-\min\{2,p\}\frac{N-1}{2}}e^{-\min\{2-\eta,\frac{p+1-\eta}{2},p-\eta\}d}e^{-\eta|x-Q_k|}.
\end{align*}
Since $p>1$, we can choose $\eta>0$ such that  $p-\eta>\frac{p+1-\eta}{2}$. Hence, there is a constant (independent of $K$) such that for all $x\in\mathbb{R}^N$,
\begin{align*}
&\Big|\Big\{\big(\sum_{j=1}^Kw_{Q_j}\big)^p-w_{Q_k}^p-pw_{Q_k}^{p-1}\sum_{j\neq k}w_{Q_j}-\sum_{j\neq k}w_{Q_j}^p\Big\}\nabla w_{Q_k}\Big|\\
&\leq Cd^{-\min\{2,p\}\frac{N-1}{2}}e^{-\min\{2-\eta,\frac{p+1-\eta}{2}\}d}e^{-\eta|x-Q_k|}.\end{align*}
Therefore,
\begin{align*}
|I_{22}|\leq Cd^{-\min\{2,p\}\frac{N-1}{2}}e^{-\min\{2-\eta,\frac{p+1-\eta}{2}\}d}
\leq CR^{-\min\{2-\eta,\frac{p+1-\eta}{2}\}m}.
\end{align*}
Combining the above estimates, we complete the proof of Claim 2.
\end{proof}

Now we can analyze $\int_{\mathbb{R}^N}E\,\frac{\partial U}{\partial\textbf{q}}\,dx$. Before we do this, we define
\begin{align*}
\widehat{d}=-\frac{\Psi'(d)}{\Psi(d)}d
=d+O(1),
\end{align*}
and for $j=1,\dots,K$, we denote
\begin{align*}
\bar{f}_j=\dot{f}_j+\dot{f}_{j-1}=(f_{j+1}-f_{j-1})\frac{K}{2\pi},\
\bar{g}_j=\dot{g}_j+\dot{g}_{j-1}=(g_{j+1}-g_{j-1})\frac{K}{2\pi}.
\end{align*}

\begin{lemma}\label{lem-4-2}
Under the assumption of Proposition \ref{prop-3-1}, for sufficiently large $K$, the following expansion holds:
\begin{align*}
-\int_{\mathbb{R}^N}E\,\frac{\partial U}{\partial\textbf{q}}\,dx
&=a_0R^{-m-2}T\textbf{q}
+R^{-m-\sigma}\varPi_{1}(\alpha,\textbf{q})+R^{-m-3}\varPi_{2}(\alpha,\textbf{q})+R^{-2m}\varPi_{3}(\alpha,\textbf{q})\\
&\quad+R^{-\min\{2-\eta,\frac{p+1-\eta}{2}\}m}\varPi_{4}(\alpha,\textbf{q})+R^{-m-3}(\ln K)^{2}\varPi_{5}(\alpha,\textbf{q},\dot{\textbf{q}},\ddot{\textbf{q}}),
\end{align*}
where $\varPi_{1}(\alpha,\textbf{q}),\dots,\varPi_{4}(\alpha,\textbf{q}),\varPi_{5}(\alpha,\textbf{q},\dot{\textbf{q}},\ddot{\textbf{q}})$ are uniformly bounded smooth vector valued functions with $\varPi_{5}(\alpha,0,0,0)=0$, and $T$ is an $2K\times2K$ matrix defined by
\begin{equation}\label{eq-T}
T=\begin{pmatrix}
c_1\,A_1+c_4\,I & c_2\,A_2\\\\
-c_2\,A_2 & c_3\,A_1
\end{pmatrix},
\end{equation}
Here $I$ is the $K\times K$ identity matrix, both $A_1$ and $A_2$ are $K\times K$ circulant matrices given by
\[
 A_1=
\begin{pmatrix}
-2 & 1 & 0 & \cdots & 0 & 1 \\
1 & -2 & 1 & 0 &\cdots & 0 \\
0 & 1 & -2 & 1 & 0 & \cdots\\
\vdots  & \ddots  & \ddots & \ddots  & \ddots & \vdots\\
0 & \cdots & 0 & 1 & -2 & 1\\
1 & 0 & \cdots & 0 & 1 & -2
\end{pmatrix},\
A_2=
\begin{pmatrix}
0 & 1 & 0 & \cdots & 0 & -1 \\
-1 & 0 & 1 & 0 &\cdots & 0 \\
0 & -1 & 0 & 1 & 0 & \cdots\\
\vdots  & \ddots  & \ddots & \ddots  & \ddots & \vdots\\
0 & \cdots & 0 & -1 & 0 & 1\\
1 & 0 & \cdots & 0 & -1 & 0
\end{pmatrix},
\]
and $c_1,c_2,c_3,c_4$ are constants given by
\begin{align}\label{eq-c1-c4}
c_1=\frac{K^2}{4\pi^2},\
c_2=(\widehat{d}-1)\frac{K}{4\pi},\
c_3=-\widehat{d}\,\frac{K^2}{4\pi^2},\
c_4=\widehat{d}-m-1.
\end{align}
\end{lemma}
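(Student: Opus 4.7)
The plan is to start from the identity of Lemma~\ref{lem-4-1} and take the dot products of $\int_{\mathbb{R}^N} E\,\mathcal{Z}_{Q_k}\,dx$ with $\vec{n}_k$ and $\vec{t}_k$, thereby producing the $2K$ components of $-\int_{\mathbb{R}^N}E\,\tfrac{\partial U}{\partial\textbf{q}}\,dx$ (recall $\tfrac{\partial U}{\partial\textbf{q}}$ is up to sign the stacking of $\mathcal{Z}_{Q_k}\cdot\vec{n}_k$'s and $\mathcal{Z}_{Q_k}\cdot\vec{t}_k$'s). All the remainder pieces already controlled in Lemma~\ref{lem-4-1} immediately feed into $\varPi_{1},\dots,\varPi_{4}$, so the real task is to expand the two explicit terms $a_0 m|Q_k|^{-m-1}\tfrac{Q_k}{|Q_k|}$ and $\sum_{j\neq k}\Psi(|Q_j-Q_k|)\tfrac{Q_j-Q_k}{|Q_j-Q_k|}$ up to linear order in $\textbf{q}$, with an $R^{-m-2}$ prefactor, and to reinterpret the resulting linear form as $a_0 R^{-m-2}T\textbf{q}$.

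For the potential term, I write $Q_k=(R+f_k)\vec{n}_k+g_k\vec{t}_k$ and expand $|Q_k|=R+f_k+O(R^{-1})$, $Q_k/|Q_k|=\vec{n}_k+(g_k/R)\vec{t}_k+O(R^{-2})$, and $|Q_k|^{-m-1}=R^{-m-1}-(m+1)R^{-m-2}f_k+O(R^{-m-3})$. Taking the $\vec{n}_k$ and $\vec{t}_k$ components yields
\[
a_0 m|Q_k|^{-m-1}\tfrac{Q_k}{|Q_k|}\cdot\vec{n}_k=a_0 m R^{-m-1}-a_0 m(m+1)R^{-m-2}f_k+O(R^{-m-3}),
\]
\[
a_0 m|Q_k|^{-m-1}\tfrac{Q_k}{|Q_k|}\cdot\vec{t}_k=a_0 m R^{-m-2}g_k+O(R^{-m-3}).
\]
For the interaction sum, Lemma~\ref{lem-3.5} applied with $\varGamma=\Psi$ shows that non-nearest-neighbor terms contribute at most $O(\Psi(2d))=O(R^{-2m})$, which is absorbed in $\varPi_3$, so only $j=k\pm 1$ matter. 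Using $\vec{n}_{k+1}=\cos(2\pi/K)\vec{n}_k+\sin(2\pi/K)\vec{t}_k$ and the analogous formula for $\vec{t}_{k+1}$, I would expand $Q_{k+1}-Q_k=a_{k,+}\vec{n}_k+b_{k,+}\vec{t}_k$ with $b_{k,+}=d+O(K^{-1})$ and $a_{k,+}=O(R/K^2+K^{-1})$, from which $|Q_{k+1}-Q_k|=d+\delta_{k,+}$ with $\delta_{k,+}$ an explicit linear combination of $f_{k+1}-f_k$, $g_{k+1}-g_k$ and curvature corrections of size $RK^{-2}$. I would then Taylor-expand $\Psi(d+\delta)=\Psi(d)\bigl(1-\widehat{d}\,\delta/d+O(\delta^2)\bigr)$ (using the definition $\widehat{d}=-\Psi'(d)d/\Psi(d)$) and also expand $(Q_{k+1}-Q_k)/|Q_{k+1}-Q_k|$. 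The symmetric treatment gives the $j=k-1$ contribution, and summing the two contributions replaces first-neighbor differences by the discrete second-order differences $\ddot{f}_k$, $\ddot{g}_k$ and the central first differences $\bar{f}_k$, $\bar{g}_k$ already introduced.

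The final step is to apply the balancing condition $a_0 m R^{-m-1}=(d/R)\Psi(d)$ (rewritten from \eqref{eq-R} via $2\sin(\pi/K)=d/R$) to cancel the constant $a_0 m R^{-m-1}$ against the leading part of the interaction term, and to collect all $R^{-m-2}$-order linear coefficients. Matching the coefficients identifies the four constants: the $\ddot{f}_k$ contribution in the $\vec{n}_k$-equation produces $c_1 A_1$; the combined $f_k$-coefficient, which mixes $-a_0 m(m+1)R^{-m-2}$ from the potential with $+a_0 m\widehat{d}R^{-m-2}$ from the interaction expansion, produces the diagonal shift $c_4 I=(\widehat{d}-m-1)I$; the cross coefficient $\bar{g}_k$ in the $\vec{n}_k$-equation (and $-\bar{f}_k$ in the $\vec{t}_k$-equation) coming from the $\delta_{k,\pm}$-linear part of $(Q_{j}-Q_k)/|Q_j-Q_k|\cdot\vec{n}_k$ (resp.\ $\cdot\vec{t}_k$) yields $\pm c_2 A_2$; and the $\ddot{g}_k$-coefficient in the $\vec{t}_k$-equation gives $c_3 A_1$. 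All genuinely nonlinear contributions (quadratic and higher in $\textbf{q},\dot{\textbf{q}},\ddot{\textbf{q}}$) are collected into $R^{-m-3}(\ln K)^2\,\varPi_5$, with $\varPi_5(\alpha,0,0,0)=0$ by construction, since the factor $(\ln K)^2$ comes from $R/K^2\sim\ln K/K$ appearing twice in the worst nonlinear remainders. The principal obstacle is not any individual estimate, which is a straightforward Taylor expansion, but the careful bookkeeping needed to verify that the linear-in-$\textbf{q}$ coefficients assemble into exactly the block matrix $T$ with the circulant structure of $A_1,A_2$ and the stated $c_1,\dots,c_4$; in particular one must keep track of several cancellations between potential and interaction contributions that use the balancing condition and the definition of $\widehat{d}$.
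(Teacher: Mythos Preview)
Your proposal is correct and follows essentially the same route as the paper: both start from Lemma~\ref{lem-4-1}, Taylor-expand the potential contribution $a_0 m|Q_k|^{-m-1}Q_k/|Q_k|$ and the nearest-neighbor interaction terms $\Psi(|Q_{k\pm1}-Q_k|)(Q_{k\pm1}-Q_k)/|Q_{k\pm1}-Q_k|$ in the frame $(\vec n_k,\vec t_k)$, invoke the balancing condition \eqref{eq-R} to kill the constant part, and then read off the $R^{-m-2}$ linear coefficients as the block matrix $T$. Two small remarks: your heuristic for the $(\ln K)^2$ in front of $\varPi_5$ is slightly off --- the extra logarithms come from the factor $\widehat d\sim m\ln K$ sitting in $\Psi'(d)$ rather than from two copies of $R/K^2$ --- and your appeal to Lemma~\ref{lem-3.5} for the non-nearest-neighbor bound needs the obvious variant where the two closest points are first removed so that the remaining minimum distance is $\approx 2d$; neither affects the argument.
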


\begin{proof}
First a simple computation shows that\begin{align*}
|Q_k|^{-m-1}\frac{Q_k}{|Q_k|}&=|Q_k^0+q_k|^{-m-1}\frac{(Q_k^0+q_k)}{|Q_k^0+q_k|}\\
&=R^{-m-1}\vec{n}_k+R^{-m-2}\Big\{g_k\vec{t}_k-(m+1)f_k\vec{n}_k\Big\}+O(R^{-m-3}).
\end{align*}

To estimate $I_2$, by direct computation, we have
\begin{align*}
&\frac{Q_{j+1}-Q_j}{|Q_{j+1}-Q_j|}\\
&=\Big\{-\sin\frac{\pi}{K}+(\dot{f}_j-g_j)R^{-1}-\dot{g}_jR^{-1}\frac{\pi}{K}-(\dot{f}_j-g_j)(f_j+\dot{g}_j)R^{-2}\Big\}\vec{n}_j\\
&\quad+\Big\{\cos\frac{\pi}{K}+(\dot{f}_j-g_j)R^{-1}\frac{\pi}{K}-\frac{1}{2}(\dot{f}_j-g_j)^2R^{-2}\Big\}\vec{t}_j+O(K^{-3}(\ln K)^{-1}),
\end{align*}
\begin{align*}
&\frac{Q_{j-1}-Q_j}{|Q_{j-1}-Q_j|}\\
&=\Big\{-\sin\frac{\pi}{K}+(-\dot{f}_{j-1}+g_j)R^{-1}-\dot{g}_{j-1}R^{-1}\frac{\pi}{K}-(-\dot{f}_{j-1}+g_j)(f_j+\dot{g}_{j-1})R^{-2}\Big\}\vec{n}_j\\
&\quad-\Big\{\cos\frac{\pi}{K}+(-\dot{f}_{j-1}+g_j)R^{-1}\frac{\pi}{K}-\frac{1}{2}(-\dot{f}_{j-1}+g_j)^2R^{-2}\Big\}\vec{t}_j+O(K^{-3}(\ln K)^{-1}),
\end{align*}
and
\begin{align*}
&\Psi(|Q_{j+1}-Q_j|)\\
&=\Psi(d+(2f_j+\bar{g}_j)\frac{\pi}{K})
+\Psi'(d)\Big\{2\ddot{g}_j\frac{\pi^2}{K^2}+2\dot{f}_j\frac{\pi^2}{K^2}+(\dot{f}_j-g_j)^2R^{-1}\frac{\pi}{K}\Big\}\\
&\quad+\Psi(d)O(K^{-3}),
\end{align*}
\begin{align*}
&\Psi(|Q_{j-1}-Q_j|)\\
&=\Psi(d+(2f_j+\bar{g}_j)\frac{\pi}{K})
+\Psi'(d)\Big\{-2\ddot{g}_j\frac{\pi^2}{K^2}-2\dot{f}_{j-1}\frac{\pi^2}{K^2}+(-\dot{f}_{j-1}+g_j)^2R^{-1}\frac{\pi}{K}\Big\}\\
&\quad+\Psi(d)O(K^{-3}).
\end{align*}
Therefore,
\begin{align*}
&\sum_{j\in\{k-1,\,k+1\}}\Psi(|Q_j-Q_k|)\frac{(Q_j-Q_k)}{|Q_j-Q_k|}\\
&=-2\sin\frac{\pi}{K}\Psi(d)\vec{n}_j
+2\Psi'(d)\frac{\pi^2}{K^2}\Big\{-(2f_j+\bar{g}_j)\vec{n}_j+(\bar{f}_j+2\ddot{g}_j)\vec{t}_j\Big\}\\
&\quad+\Psi(d)R^{-1}\frac{\pi}{K}\Big\{(2\ddot{f}_j-\bar{g}_j)\vec{n}_j+(\bar{f}_j-2g_j)\vec{t}_j\Big\}+\Psi(d)O(K^{-3}).
\end{align*}

Combining the above estimates and Lemma \ref{lem-4-1}, we get
\begin{align*}
\int_{\mathbb{R}^N}E\mathcal{Z}_{Q_k}\,dx&=a_0R^{-m-2}\Big\{-(m+1)f_k+(\ddot{f}_k-\frac{1}{2}\bar{g}_k)+\widehat{d}(f_k+\frac{1}{2}\bar{g}_k)\Big\}\vec{n}_k\\
&\quad+a_0R^{-m-2}\Big\{g_k+(\frac{1}{2}\bar{f}_k-g_k)-\widehat{d}(\frac{1}{2}\bar{f}_k+\ddot{g}_k)\Big\}\vec{t}_k\\
&\quad+R^{-m-\sigma}\varPi_{k,1}(\alpha,\textbf{q})+R^{-m-3}\varPi_{k,2}(\alpha,\textbf{q})+R^{-2m}\varPi_{k,3}(\alpha,\textbf{q})\\
&\quad+R^{-\min\{2-\eta,\frac{p+1-\eta}{2}\}m}\varPi_{k,4}(\alpha,\textbf{q})+R^{-m-3}(\ln K)^{2}\varPi_{k,5}(\alpha,\textbf{q},\dot{\textbf{q}},\ddot{\textbf{q}})
\end{align*}
where $\varPi_{k,l}(\alpha,\textbf{q})$'s and $\varPi_{k,5}(\alpha,\textbf{q},\dot{\textbf{q}},\ddot{\textbf{q}})$ are smooth vector valued functions, which are uniformly bounded as $K\rightarrow\infty$. Moreover, $\varPi_{k,5}(\alpha,0,0,0)=0$. The desired result follows.
\end{proof}

Next we compute $\int_{\mathbb{R}^N}(L[\phi]+N(\phi))\frac{\partial U}{\partial\textbf{q}}\,dx$.

\begin{lemma}\label{lem-4-3}
Under the assumption of Proposition \ref{prop-3-1}, for sufficiently large $K$, the following expansions hold true:
\begin{equation*}
\int_{\mathbb{R}^N}L[\phi]\frac{\partial U}{\partial\textbf{q}}\,dx
=K^{-\min\{2,\frac{p}{2}+1-\frac{\eta}{2},p-\frac{\eta}{2}\}m}(\ln K)^{2}\varPi_{6}(\alpha,\textbf{q}),
\end{equation*}
and
\begin{equation*}
\int_{\mathbb{R}^N}N(\phi)\frac{\partial U}{\partial\textbf{q}}\,dx
=K^{-\min\{2,\,p-\eta\}m}(\ln K)^{-1}\varPi_{7}(\alpha,\textbf{q}),
\end{equation*}
where $\varPi_{6}(\alpha,\textbf{q}),\varPi_{7}(\alpha,\textbf{q})$ are uniformly bounded smooth vector valued functions.
\end{lemma}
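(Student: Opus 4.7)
The plan is to bound both integrals by combining the a priori estimate $\|\phi\|_{**}\leq C_0 K^{-\min\{1,(p-\eta)/2\}m}(\ln K)^{-1/2}$ from Proposition \ref{prop-3-1} with sharp pointwise control of $L[\phi]$ and $N(\phi)$, then integrating against the exponentially decaying vector field $\partial U/\partial\textbf{q}$, whose $k$-th pair of components consists of $-\mathcal{Z}_{Q_k}=-\nabla w_{Q_k}$ projected onto $\vec{n}_k$ and $\vec{t}_k$. The main analytic tools will be the pointwise weight inequality $|\phi(x)|\leq\|\phi\|_{**}\sum_j e^{-\eta|x-Q_j|}$, the convolution asymptotics of Lemma \ref{lem-acr-07}, the integral estimates of Lemma \ref{lem-act}, and the decomposition of $\mathbb{R}^N$ into the near-spike regions $\Omega_l^\ell$ used in the proof of Lemma \ref{lem-error}.

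For the first estimate I would integrate by parts to transfer the linear operator onto the test function,
\begin{equation*}
\int_{\mathbb{R}^N}L[\phi]\,\mathcal{Z}_{Q_k}\,dx=\int_{\mathbb{R}^N}\phi\,L[\mathcal{Z}_{Q_k}]\,dx,
\end{equation*}
which is legitimate since $\phi\in W^{2,2}\cap\mathcal{B}_{**}$ and $\mathcal{Z}_{Q_k}$ decays exponentially. Because $\nabla w$ lies in the kernel of $-\Delta+1-pw^{p-1}$, one computes
\begin{equation*}
L[\mathcal{Z}_{Q_k}]=(V(x)-1)\nabla w_{Q_k}-p\bigl(U^{p-1}-w_{Q_k}^{p-1}\bigr)\nabla w_{Q_k},
\end{equation*}
exactly as in the proof of Lemma \ref{lem-3-1}. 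Splitting $\mathbb{R}^N$ at $|x|=R/3$, the potential piece is of order $R^{-m}\|\phi\|_\infty$, since $|\nabla w_{Q_k}|$ is exponentially small when $|x|<R/3$ (where $|x-Q_k|\geq R/2$) and $|V(x)-1|\leq CR^{-m}$ otherwise. The interaction piece is handled via the bound $|U^{p-1}-w_{Q_k}^{p-1}|\leq Cw_{Q_k}^{p-2}\sum_{j\neq k}w_{Q_j}$ near $Q_k$ together with Lemma \ref{lem-act}, producing a factor of order $d\,e^{-\min\{1,p/2\}d}\|\phi\|_\infty$. Combining and inserting $\|\phi\|_\infty\leq C\|\phi\|_{**}$ together with $d\sim m\ln K$ and $R^{-m}\sim K^{-m}(\ln K)^{-m}$ yields the two exponents $1+\min\{1,(p-\eta)/2\}$ (potential) and $\min\{1,p/2\}+\min\{1,(p-\eta)/2\}$ (interaction), whose minimum is $\min\{2,\,p/2+1-\eta/2,\,p-\eta/2\}$, as claimed; all residual logarithmic factors are absorbed by $(\ln K)^2$.

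For the nonlinear integral I would use $|N(\phi)|\leq C|\phi|^{\min\{p,2\}}$ together with the weighted pointwise bound on $\phi$. When $p\geq 2$ one writes $|N(\phi)|\leq C\|\phi\|_\infty|\phi|$ and applies Lemma \ref{lem-acr-07} to show $\int|\phi|\,|\nabla w_{Q_k}|\,dx\leq C\|\phi\|_{**}$, yielding the desired $\|\phi\|_{**}^2\sim K^{-2m}(\ln K)^{-1}$. The main obstacle is the regime $p<2$, where the crude bound $|N(\phi)|\leq C|\phi|^p$ followed by H\"older loses a power of $p/2$ and produces only $K^{-p(p-\eta)m/2}$, which is weaker than the claimed $K^{-(p-\eta)m}$. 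To recover the correct exponent I would decompose $\mathbb{R}^N=\Omega_{K+1}^\ell\cup\bigcup_{l=1}^K\Omega_l^\ell$ with $\ell$ large but independent of $K$: in each near-spike region $\Omega_l^\ell$ the Taylor refinement $|N(\phi)|\leq CU^{p-2}\phi^2\sim Cw_{Q_l}^{p-2}\phi^2$ applies because $|\phi|\ll U$ there, while in $\Omega_{K+1}^\ell$ the function $\phi$ itself is exponentially small so that $|\phi|^p$ is integrable against $|\nabla w_{Q_k}|$ with ample decay to spare. One then chooses $\eta$ sufficiently small (in particular $\eta<p-1$) to ensure that the convolution integrals $\int w_{Q_l}^{p-2}e^{-\eta|x-Q_j|}|\nabla w_{Q_k}|\,dx$ converge, and the sum over $j,l$ is controlled via Lemma \ref{lem-3.5}, producing the claimed bound $K^{-(p-\eta)m}(\ln K)^{-1}$.
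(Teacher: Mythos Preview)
Your treatment of the first integral is exactly the paper's: integrate by parts, invoke the estimate \eqref{LZ} on $\int\phi\,L[\mathcal{Z}_{Q_k}]$, and multiply by the bound on $\|\phi\|_{**}$ from Proposition~\ref{prop-3-1}; the exponent bookkeeping you give is correct and matches $\min\{2,\,p/2+1-\eta/2,\,p-\eta/2\}$. For the nonlinear integral when $p\geq2$ your argument is again the same as the paper's.

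The only point of difference is the case $p<2$. Your spatial decomposition into $\Omega_l^\ell$ would work, but it is more laborious than necessary. The paper avoids it with a one-line trick: since for $p<2$ one has the global pointwise bound $|N(\phi)|\leq CU^{p-2}|\phi|^2$ (both regimes $|\phi|\leq U/2$ and $|\phi|>U/2$ yield this when $p-2<0$), and since $U\geq w_{Q_k}$ with $p-2<0$ gives $U^{p-2}\leq w_{Q_k}^{p-2}$, one obtains directly
\[
|N(\phi)\,\mathcal{Z}_{Q_k}|\leq C\,U^{p-2}w_{Q_k}\,|\phi|^2\leq C\,w_{Q_k}^{p-1}|\phi|^2,
\]
which is integrable because $w_{Q_k}^{p-1}\in L^1(\mathbb{R}^N)$ and $\sum_j e^{-\eta|x-Q_j|}$ is uniformly bounded; this gives $\int|N(\phi)\,\mathcal{Z}_{Q_k}|\leq C\|\phi\|_{**}^2$ with no decomposition and no convergence issues coming from the growth of $w_{Q_l}^{p-2}$ at infinity. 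Your route reaches the same bound but through a longer computation.
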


\begin{proof}
By integration by parts, \eqref{LZ} and Proposition~\ref{prop-3-1}, we have
\begin{align*}
\Big|\int_{\mathbb{R}^N}L[\phi]\,\mathcal{Z}_{Q_k}\,dx\Big|
=\Big|\int_{\mathbb{R}^N}\phi\,L[\mathcal{Z}_{Q_k}]\,dx\Big|
&\leq Cde^{-\min\{1,\frac{p}{2}\}d}\|\phi\|_{L^\infty(\mathbb{R}^N)}\\
&\leq CK^{-\min\{2,\frac{p}{2}+1-\frac{\eta}{2},p-\frac{\eta}{2}\}m}(\ln K)^{2}.
\end{align*}

For the second estimate, since $\|\phi\|_{**}\leq CK^{-\min\{1,\frac{p-\eta}{2}\}m}(\ln K)^{-\frac{1}{2}}$, we have
\begin{align*}
|N(\phi)|\leq
\begin{cases}
CU^{p-2}|\phi|^2,&\text{for}\ |\phi|\leq U/2,\\
C|\phi|^{p},&\text{for}\ |\phi|\geq U/2.
\end{cases}
\end{align*}
We claim that
\begin{align*}
\Big|\int_{\mathbb{R}^N}N(\phi)\mathcal{Z}_{Q_k}\,dx\Big|
\leq C\|\phi\|_{**}^2
\leq CK^{-\min\{2,\,p-\eta\}m}(\ln K)^{-1}.
\end{align*}
Indeed, when $p\geq2$, this follows from $|N(\phi)|\leq C|\phi|^2$. Now we consider the case  $p<2$. In this case, it is not hard to get $|N(\phi)|\leq CU^{p-2}|\phi|^2$. Since $U\geq w_{Q_k}$, we have
\begin{align*}
|N(\phi)\mathcal{Z}_{Q_k}|\leq CU^{p-2}w_{Q_k}|\phi|^2
\leq Cw_{Q_k}^{p-1}|\phi|^2,\ \text{if}\ p<2,
\end{align*}
from which we get the desired result.
\end{proof}

\subsection{The invertibility of $T$}

In this subsection, we study the linear problem $T\textbf{q}=\textbf{b}$ and get the following result, whose proof is delayed to Appendix A.

\begin{lemma}\label{lem-T}
There is an $K_0\in\mathbb{N}$ such that for all $K\geq K_0$ and every $\textbf{b}\in\mathbb{R}^{2K}$, there exists a unique vector $\vec{\textbf{q}}\in\mathbb{R}^{2K}$ and a unique constant $\gamma\in\mathbb{R}$ such that
\begin{equation}
T\vec{\textbf{q}}=\textbf{b}+\gamma\,\textbf{q}_1,\ \vec{\textbf{q}}\perp\,\textbf{q}_0.
\end{equation}
Moreover, there is a positive constant $C$ which is independent of $K$ such that
\begin{equation}\label{q-b-0}
\|\vec{\textbf{q}}\|_*\leq C(\ln K)^2\|\textbf{b}\|_\infty.
\end{equation}
\end{lemma}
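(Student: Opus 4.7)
The plan is to diagonalize $T$ using the discrete Fourier transform (DFT): since $A_1$ and $A_2$ are circulant $K \times K$ matrices, they are simultaneously diagonalized by the orthonormal basis $v_\ell = K^{-1/2}(1, \omega^\ell, \omega^{2\ell}, \ldots, \omega^{(K-1)\ell})^T$ with $\omega = e^{2\pi i/K}$ and $\ell = 0, 1, \ldots, K-1$, having eigenvalues $\mu_\ell^{(1)} = -4\sin^2(\pi\ell/K)$ and $\mu_\ell^{(2)} = 2i\sin(2\pi\ell/K)$ respectively. Restricted to the two-dimensional subspace spanned by $(v_\ell, 0)^T$ and $(0, v_\ell)^T$, the operator $T$ acts as the $2\times 2$ matrix
\[
T_\ell = \begin{pmatrix} c_1\mu_\ell^{(1)} + c_4 & c_2\mu_\ell^{(2)} \\ -c_2\mu_\ell^{(2)} & c_3\mu_\ell^{(1)} \end{pmatrix}, \qquad \ell = 0, 1, \ldots, K-1.
\]

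First I would identify the kernel. At $\ell = 0$ one has $T_0 = \mathrm{diag}(c_4, 0)$, whose one-dimensional kernel $\mathrm{span}\{(0, 1)^T\}$ lifts exactly to $\mathrm{span}\{\textbf{q}_0\}$ in the physical basis. For $\ell \neq 0$, plugging the values in \eqref{eq-c1-c4} into $\det T_\ell = c_1 c_3 (\mu_\ell^{(1)})^2 + c_3 c_4 \mu_\ell^{(1)} + c_2^2 (\mu_\ell^{(2)})^2$ and using $\hat{d} = d + O(1) \sim m\ln K$, a direct computation yields
\[
\det T_\ell = -\frac{K^2 \sin^2(\pi\ell/K)}{\pi^2}\Bigl[\hat{d}\,\frac{K^2 \sin^2(\pi\ell/K)}{\pi^2} + (m-1)\hat{d} + O(1)\Bigr],
\]
which, because $m > 1$ and $\hat{d} \gg 1$, is nonzero for all $\ell = 1, \ldots, K-1$ when $K$ is sufficiently large. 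Expanding $\vec{\textbf{q}} = \sum_\ell \hat{\textbf{q}}_\ell v_\ell$, $\textbf{b} = \sum_\ell \hat{\textbf{b}}_\ell v_\ell$ and $\textbf{q}_1 = \sum_\ell \hat{\textbf{q}}_{1,\ell} v_\ell$, the equation $T\vec{\textbf{q}} = \textbf{b} + \gamma\textbf{q}_1$ decouples into $T_\ell\hat{\textbf{q}}_\ell = \hat{\textbf{b}}_\ell + \gamma\hat{\textbf{q}}_{1,\ell}$. For $\ell \neq 0$ one solves $\hat{\textbf{q}}_\ell = T_\ell^{-1}(\hat{\textbf{b}}_\ell + \gamma\hat{\textbf{q}}_{1,\ell})$. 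At $\ell = 0$, projecting along $\ker T_0$ yields a scalar solvability condition uniquely determining $\gamma$ (using the nondegeneracy $\textbf{q}_1 \cdot \textbf{q}_0 \neq 0$ coming from the choice of $\textbf{q}_1$), while the constraint $\vec{\textbf{q}} \perp \textbf{q}_0$ fixes the remaining kernel component of $\hat{\textbf{q}}_0$. This gives existence and uniqueness.

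For the norm bound, set $\tilde{\ell} = (K/\pi)\sin(\pi\ell/K) \sim \min(\ell, K-\ell)$, so that $|\mu_\ell^{(1)}| = 4\pi^2\tilde{\ell}^2/K^2$ and $|\mu_\ell^{(2)}| \leq 4\pi\tilde{\ell}/K$; in Fourier the weighted norms $\|\dot{\textbf{q}}\|_\infty$ and $\|\ddot{\textbf{q}}\|_\infty$ correspond to multiplication of $\hat{\textbf{q}}_\ell$ by $\tilde{\ell}$ and $\tilde{\ell}^2$ respectively. A direct cofactor computation gives
\[
|(T_\ell^{-1})_{11}| \lesssim \tfrac{1}{\tilde{\ell}^2 + m - 1}, \ |(T_\ell^{-1})_{12}|, |(T_\ell^{-1})_{21}| \lesssim \tfrac{1}{\tilde{\ell}(\tilde{\ell}^2 + m - 1)}, \ |(T_\ell^{-1})_{22}| \lesssim \tfrac{1}{\tilde{\ell}^2(\tilde{\ell}^2 + m - 1)}.
\]
Representing the solution as $\vec{\textbf{q}} = G * \textbf{b}$ plus a kernel correction, with matrix-valued discrete Green's function $G_j = K^{-1}\sum_\ell T_\ell^{-1}\omega^{-j\ell}$, the estimate reduces to bounding the $\ell^1$-norms of $G$ and of its discrete first and second differences.

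The main obstacle is the bound on $\|\ddot{G}\|_{\ell^1}$: the direct Fourier sum $\sum_\ell \tilde{\ell}^2 |(T_\ell^{-1})_{ij}|$ against $\|\textbf{b}\|_\infty$ cannot be summed crudely, so one must exploit the convolution structure. In the continuous limit $K \to \infty$, eliminating $g$ from the limiting ODE system produces a scalar elliptic equation of the form $f'' - (m-1)f = h(\textbf{b})$ on the circle, whose Green's function and its second derivative have bounded $L^1$-norms. The logarithmic losses arise from (i) summing entries of $T_\ell^{-1}$ carrying the extra weight $\tilde{\ell}$, producing harmonic sums $\sum_\ell 1/\tilde{\ell} \sim \ln K$, and (ii) the coupling between $f$ and $g$ through coefficients of size $\hat{d} \sim \ln K$, which contributes a second logarithmic factor once a decomposition into low- and high-frequency modes is performed and summation by parts is used to redistribute the Fourier weights. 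Combining these estimates produces the asserted bound $\|\vec{\textbf{q}}\|_* \leq C(\ln K)^2 \|\textbf{b}\|_\infty$.
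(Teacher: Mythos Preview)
Your Fourier diagonalization for existence and uniqueness is exactly what the paper does (its Lemma~\ref{lem-T-0}): the kernel is precisely $\mathrm{span}\{\textbf{q}_0\}$, and the $2\times 2$ blocks $T_\ell$ are nonsingular for $\ell\neq 0$. So far, so good.

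The norm bound is where your sketch diverges from the paper and becomes incomplete. First, your cofactor bound on the $(2,2)$ entry is not correct: a direct computation gives
\[
(T_\ell^{-1})_{22}=\frac{-\tilde\ell^{\,2}+\widehat d-m-1}{-\widehat d\,\tilde\ell^{\,2}\bigl(\tilde\ell^{\,2}+m-1+O(\widehat d^{\,-1})\bigr)},
\]
so for $\tilde\ell^{\,2}\gg\widehat d$ one has $|(T_\ell^{-1})_{22}|\sim 1/(\widehat d\,\tilde\ell^{\,2})$, which is \emph{larger} than the $1/(\tilde\ell^{\,2}(\tilde\ell^{\,2}+m-1))$ you claim. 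More seriously, the final paragraph never actually bounds any $\ell^1$ norm: passing from Fourier symbols to $\ell^1$ control of a discrete kernel is exactly the hard step, and ``summation by parts plus low/high frequency decomposition'' is an outline, not an argument. Your explanation of the $(\ln K)^2$ loss (harmonic sums plus $\widehat d$-coupling) does not match what actually happens.

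The paper's route for the $\ell^\infty$ bound is quite different and avoids this difficulty. It first gets $\ell^2$ estimates directly from the Fourier eigenvalues (Lemma~\ref{lem-T-0}): $\|\textbf{q}\|_2\le C\|\textbf{b}\|_2$. Then it writes down the continuous limit system~\eqref{cont}, solves it explicitly (Lemma~\ref{lem-8-1}), and uses its Green's matrix $G(\theta,s)$ to build an \emph{approximate} discrete inverse: evaluating $G$ at the grid points $(\theta_l,\theta_j)$ produces vectors $\textbf{q}_j$ with $T\textbf{q}_j=\vec e_j+\vec\tau_j$, where the truncation error $\vec\tau_j=O(K^{-2})$. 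The $\ell^2$ estimate then absorbs the residual, yielding $\|\textbf{q}\|_\infty\le C\|\textbf{b}\|_\infty$ with \emph{no} logarithmic loss. Both $\ln K$ factors enter only afterwards, by bootstrapping: rewriting the system as
\[
c_1\widetilde f_j=\phi_j-c_4 f_j-c_2(\widehat g_j+\widehat g_{j-1}),\qquad
c_3\widetilde g_j=\varphi_j+c_2(\widehat f_j+\widehat f_{j-1}),
\]
the coefficients $c_4\sim\widehat d$ and $c_2/c_1\sim\widehat d\,K^{-1}$ produce one factor of $\ln K$ per discrete derivative. A final short argument (end of Appendix~A) replaces $\textbf{q}_0$ by $\textbf{q}_1$ using $R^{-1}\textbf{q}_1=c_0\textbf{q}_0+O(KR^{-m})$.

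In short: keep your Fourier analysis for the kernel, but for the $\|\cdot\|_*$ estimate you need either to carry out the $\ell^1$ bounds rigorously (which you have not), or to adopt the paper's approximate-inverse-plus-bootstrap argument, which is both shorter and explains transparently where the two logarithms come from.
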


Denote the inverse of $T$ in the sense of Lemma~\ref{lem-T} by $T^{-1}$. Since $\textbf{q}_1$ depends on the parameter $\textbf{q}$, the matrix $T^{-1}$ depends on $\textbf{q}$ and thus we write $T^{-1}=T^{-1}_{\textbf{q}}$.

\subsection{Reduction to one dimension}

Now we can state the main result in this section.

\begin{proposition}\label{prop-4-1}
Under the assumption of Theorem \ref{thm-0}, there is an integer $K_0>0$ such that: for all integer $K\geq K_0$ and for each $\alpha\in\mathbb{R}$, there exists a unique $(\textbf{q},\gamma)=(\textbf{q}(\alpha),\gamma(\alpha))$ such that $\vec{\beta}(\alpha,\textbf{q},\gamma)=0$. As a result, $\phi(x;\alpha,\textbf{q}(\alpha))$ and $\gamma(\alpha)$ satisfy the equation:
\begin{equation}\label{eq-u-2-a}
\begin{cases}
L[\phi]+E+N(\phi)=\gamma\frac{\partial U}{\partial\alpha},
\vspace{1mm}\\
\int_{\mathbb{R}^N}\phi\,\mathcal{Z}_{Q_j}\,dx=0,\ \forall\,j=1,\dots,K.
\end{cases}
\end{equation}
Moreover, the function $\phi(x;\alpha,\textbf{q}(\alpha))$ is of class $C^1$ in $\alpha$, and we have
\begin{equation}
\|\phi\|_{**}\leq C_0K^{-\min\{1,\frac{p-\eta}{2}\}m}(\ln K)^{-\frac{1}{2}},\
\|\textbf{q}\|_*+R^{-1}\|\partial_{\alpha}\textbf{q}\|_*\leq CK^{-\mu}(\ln K)^2,
\end{equation}
where $C$ is a positive constant (independent of $K$), and
\begin{align}
0<\mu<\min\left\{\sigma-2,\,\min\big\{1-\eta,\frac{p-1-\eta}{2}\big\}m-2\right\}.
\end{align}
\end{proposition}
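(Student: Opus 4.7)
The plan is to recast the equation $\vec{\beta}(\alpha,\textbf{q},\gamma)=0$ as a fixed-point problem for $\textbf{q}$ (with the gauge $\textbf{q}\perp\textbf{q}_0$) and an associated scalar $\gamma$. Starting from \eqref{beta-1}, namely
\begin{equation*}
\int_{\mathbb{R}^N}\bigl(E+L[\phi]+N(\phi)\bigr)\,\frac{\partial U}{\partial\textbf{q}}\,dx
=\gamma\int_{\mathbb{R}^N}\frac{\partial U}{\partial\alpha}\,\frac{\partial U}{\partial\textbf{q}}\,dx,
\end{equation*}
I would substitute the expansion of the error projection from Lemma~\ref{lem-4-2} (which identifies the leading linear contribution as $a_0R^{-m-2}T\textbf{q}$) and use $\frac{\partial U}{\partial\alpha}=(R\textbf{q}_0+\textbf{q}^\perp)\cdot\frac{\partial U}{\partial\textbf{q}}$ to identify $\int\partial_\alpha U\,\partial_\textbf{q}U\,dx$ with $M(R\textbf{q}_0+\textbf{q}^\perp)$. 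Dividing through by $a_0R^{-m-2}$ and absorbing the factor into a redefined scalar $\tilde\gamma$ and a vector $\textbf{q}_1$, the equation takes the schematic form
\begin{equation*}
T\textbf{q}=\tilde\gamma\,\textbf{q}_1+\mathcal{R}(\alpha,\textbf{q}),\qquad \textbf{q}\perp\textbf{q}_0,
\end{equation*}
where $\mathcal{R}$ collects all the subprincipal terms produced by Lemmas~\ref{lem-4-1}, \ref{lem-4-2}, \ref{lem-4-3}.

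Next I would invoke the invertibility statement of Lemma~\ref{lem-T}: for any right-hand side $\textbf{b}\in\mathbb{R}^{2K}$, the problem $T\vec{\textbf{q}}=\textbf{b}+\gamma\textbf{q}_1$ with $\vec{\textbf{q}}\perp\textbf{q}_0$ has a unique solution $(\vec{\textbf{q}},\gamma)$ satisfying $\|\vec{\textbf{q}}\|_*\le C(\ln K)^2\|\textbf{b}\|_\infty$. This lets me define a nonlinear operator $\mathcal{G}_\alpha(\textbf{q})=T^{-1}_\textbf{q}\mathcal{R}(\alpha,\textbf{q})$ on the ball
\begin{equation*}
\mathcal{B}_K^*=\bigl\{\textbf{q}\in\mathbb{R}^{2K}:\textbf{q}\perp\textbf{q}_0,\ \|\textbf{q}\|_*\le K^{-\mu}(\ln K)^2\bigr\},
\end{equation*}
and reduce the existence of $(\textbf{q}(\alpha),\gamma(\alpha))$ to finding a fixed point of $\mathcal{G}_\alpha$ in $\mathcal{B}_K^*$. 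Since the scalar $\gamma$ is determined simultaneously with $\vec{\textbf{q}}$ by Lemma~\ref{lem-T}, uniqueness of $\gamma(\alpha)$ follows automatically from uniqueness of the fixed point.

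The main estimate then is on the size and Lipschitz continuity of $\mathcal{R}$. From Lemma~\ref{lem-4-2}, after dividing by $R^{-m-2}$, each subprincipal term contributes $R^{-(\sigma-2)}$, $R^{-1}$, $R^{-(m-2)}$, $R^{-(\min\{1-\eta,(p-1-\eta)/2\}m-2)}$, or $R^{-1}(\ln K)^2\|\textbf{q}\|_*$; from Lemma~\ref{lem-4-3}, the $\phi$-contributions scale like $R^{m+2}K^{-\min\{2,\frac{p}{2}+1-\frac\eta2,p-\frac\eta2\}m}(\ln K)^2$ and $R^{m+2}K^{-\min\{2,p-\eta\}m}$. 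Choosing $\eta>0$ sufficiently small and using the assumptions $\min\{1,(p-1)/2\}m>2$ and $\sigma>2$ in \eqref{cond-m}, every such term is bounded by $K^{-\mu_*}$ for some $\mu_*>0$; after paying the $(\ln K)^2$ price from $T^{-1}$, one gets $\|\mathcal{G}_\alpha(\textbf{q})\|_*\le CK^{-\mu}(\ln K)^2$ for any $\mu\in(0,\mu_*)$, so $\mathcal{G}_\alpha$ maps $\mathcal{B}_K^*$ into itself. Differentiating the same expansions with respect to $\textbf{q}$, and using the Lipschitz estimate for $\phi$ in $\textbf{q}$ provided by Proposition~\ref{prop-3-1}, yields $\|\mathcal{G}_\alpha(\textbf{q})-\mathcal{G}_\alpha(\mathring{\textbf{q}})\|_*\le\tfrac12\|\textbf{q}-\mathring{\textbf{q}}\|_*$ for $K$ large, so the contraction mapping theorem applies and produces the unique $(\textbf{q}(\alpha),\gamma(\alpha))$.

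Finally, the $C^1$ dependence on $\alpha$ follows from the implicit function theorem: the map $(\alpha,\textbf{q},\gamma)\mapsto T\textbf{q}-\gamma\textbf{q}_1-\mathcal{R}(\alpha,\textbf{q})$ is $C^1$ in $\alpha$ and $\textbf{q}$ by the $C^1$ regularity of $\phi$ stated in Proposition~\ref{prop-3-1}, and its partial derivative in $(\textbf{q},\gamma)$ at the fixed point is a small perturbation of $(\textbf{q},\gamma)\mapsto(T\textbf{q}-\gamma\textbf{q}_1,\textbf{q}\cdot\textbf{q}_0)$, which is invertible by Lemma~\ref{lem-T}. The estimate $\|\partial_\alpha\textbf{q}\|_*\le CRK^{-\mu}(\ln K)^2$ is obtained by differentiating the fixed-point identity and using the $\alpha$-Lipschitz bound $R^{-1}\|\partial_\alpha\phi\|_{**}\le C(K^{-\min\{1,(p-\eta)/2\}m}(\ln K)^{-1/2})^{\min\{p-1,1\}}$ from Proposition~\ref{prop-3-1}. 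The main obstacle in the whole argument is the bookkeeping needed to show that all subprincipal contributions, once divided by the small factor $R^{-m-2}$ and multiplied by the loss $(\ln K)^2$ from $T^{-1}$, still leave a strictly positive power of $K$ to spare; this is exactly where the hypotheses $\min\{1,(p-1)/2\}m>2$ and $\sigma>2$ are used in an essential way.
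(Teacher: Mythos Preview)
Your proposal is correct and follows essentially the same route as the paper: rewrite $\vec\beta=0$ as $T\textbf{q}=\tilde\gamma\,\textbf{q}_1+\mathcal{R}(\alpha,\textbf{q})$ with $\textbf{q}\perp\textbf{q}_0$, invert via Lemma~\ref{lem-T} to obtain a fixed-point map $\textbf{q}\mapsto T^{-1}_{\textbf{q}}\mathcal{R}(\alpha,\textbf{q})$, check smallness and contraction using the expansions of Lemmas~\ref{lem-4-2}--\ref{lem-4-3}, and conclude $C^1$ dependence on $\alpha$ by the implicit function theorem together with the Lipschitz bounds from Proposition~\ref{prop-3-1}. The only point worth making explicit in your write-up is that $T^{-1}_{\textbf{q}}$ itself depends on $\textbf{q}$ through $\textbf{q}_1=M(R\textbf{q}_0+\textbf{q}^\perp)$, so the contraction estimate must also absorb a term of size $\|(T^{-1}_{\textbf{q}}-T^{-1}_{\mathring{\textbf{q}}})\mathcal{R}\|_*\le CKR^{-m}(\ln K)^2\|\textbf{q}-\mathring{\textbf{q}}\|_*$, which is harmless.
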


To prove Proposition \ref{prop-4-1}, it suffices to solve $\vec{\beta}(\alpha,\textbf{q},\gamma)=0$ for each $\alpha$. By the results in the preceding subsections, we can rewrite this equation in a more explicit form.
\begin{lemma}\label{lem-4-3}
For every $\alpha\in\mathbb{R}$, the equation $\vec{\beta}(\alpha,\textbf{q},\gamma)=0$ is equivalent to
\begin{align}\label{q-4-1}
-a_0mR^{-m-2}T\textbf{q}+\Phi(\alpha,\textbf{q})=\gamma\,\textbf{q}_1,
\end{align}
where $T$ is the $2K\times2K$ matrix defined in \eqref{eq-T}, $\Phi$ denotes the remaining term, and
\begin{align*}
\textbf{q}_1=\int_{\mathbb{R}^N}\frac{\partial U}{\partial \alpha}\frac{\partial U}{\partial\textbf{q}}\,dx=M(R\textbf{q}_0+\textbf{q}^\perp).
\end{align*}
\end{lemma}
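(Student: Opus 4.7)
The plan is a direct projection argument building on the analysis already done in this section. Equation~\eqref{phi-4-2}, namely
\[
L[\phi]+E+N(\phi)=\vec{\beta}\cdot\frac{\partial U}{\partial\textbf{q}}+\gamma\,\frac{\partial U}{\partial\alpha},
\]
is just a rewriting of \eqref{eq-u-1} via the definition \eqref{beta-gamma} of $\vec{\beta}$. My first step is to test this identity against each component of $\partial U/\partial\textbf{q}$ and integrate over $\mathbb{R}^N$. The projections of the two terms on the right produce $M\vec{\beta}+\gamma\,\textbf{q}_1$, where $M$ is the matrix of \eqref{matrix-M} and $\textbf{q}_1:=\int_{\mathbb{R}^N}\frac{\partial U}{\partial\alpha}\frac{\partial U}{\partial\textbf{q}}\,dx$. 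Because $M$ is invertible for $K$ large by Lemma~\ref{lem-M}, the condition $\vec{\beta}=0$ is equivalent to the projected identity
\[
\int_{\mathbb{R}^N}\bigl(E+L[\phi]+N(\phi)\bigr)\frac{\partial U}{\partial\textbf{q}}\,dx=\gamma\,\textbf{q}_1.
\]

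Next, I identify $\textbf{q}_1$ with $M(R\textbf{q}_0+\textbf{q}^\perp)$. This rests on the identity
\[
\frac{\partial U}{\partial\alpha}=(R\textbf{q}_0+\textbf{q}^\perp)\cdot\frac{\partial U}{\partial\textbf{q}},
\]
obtained by direct differentiation in Section~2. Using symmetry of $M$, the $j$-th entry of $\textbf{q}_1$ satisfies
\[
(\textbf{q}_1)_j=\sum_{k}(R\textbf{q}_0+\textbf{q}^\perp)_{k}\int_{\mathbb{R}^N}\frac{\partial U}{\partial q_k}\frac{\partial U}{\partial q_j}\,dx=\bigl(M(R\textbf{q}_0+\textbf{q}^\perp)\bigr)_j,
\]
which is the formula claimed.

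It then remains to extract the linear-in-$\textbf{q}$ part from the integral on the left. Lemma~\ref{lem-4-2} gives the expansion of $-\int E\,\partial_\textbf{q}U\,dx$ whose leading-order linear term is exactly the desired scalar multiple of $R^{-m-2}T\textbf{q}$, with $T$ as defined in \eqref{eq-T}; every other contribution to this integral is at least as small as the error terms recorded there. Moreover, Lemma~\ref{lem-4-3} bounds $\int L[\phi]\,\partial_\textbf{q}U\,dx$ and $\int N(\phi)\,\partial_\textbf{q}U\,dx$ by strictly higher-order quantities. Collecting all non-leading contributions into a single remainder $\Phi(\alpha,\textbf{q})$ transforms the projected identity into precisely \eqref{q-4-1}, completing the proof. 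There is no real obstacle to overcome at this step: the lemma is essentially a bookkeeping reorganization, since the genuine analytical work (solvability and boundedness of $\phi$, invertibility of $M$, and the refined projections of $E$, $L[\phi]$, $N(\phi)$) has already been done in the preceding lemmas; the only care needed is in tracking the normalizing constants so that the coefficient in front of $T\textbf{q}$ matches the one appearing in the statement.
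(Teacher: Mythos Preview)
Your proposal is correct and matches the paper's approach: the paper does not even give a separate proof for this lemma, simply introducing it with the sentence ``By the results in the preceding subsections, we can rewrite this equation in a more explicit form,'' and your write-up spells out precisely those details (testing \eqref{phi-4-2} against $\partial U/\partial\textbf{q}$, invoking Lemma~\ref{lem-M} for the equivalence with $\vec{\beta}=0$, using $\partial U/\partial\alpha=(R\textbf{q}_0+\textbf{q}^\perp)\cdot\partial U/\partial\textbf{q}$ to identify $\textbf{q}_1$, and reading off the linear part from Lemma~\ref{lem-4-2}). One cosmetic remark: the paper has two distinct lemmas sharing the label \texttt{lem-4-3}; when you cite ``Lemma~\ref{lem-4-3}'' for the projections of $L[\phi]$ and $N(\phi)$ you mean the earlier one, so in a clean write-up you would want to disambiguate.
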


By Lemma~\ref{lem-4-2}, Lemma~\ref{lem-4-3} and Lemma~\ref{lem-4-3}, we have the following estimate of $\Phi(\alpha,\textbf{q})$.
\begin{lemma}\label{lem-Phi}
Under the assumption of Proposition \ref{prop-3-1}, for $K$ sufficiently large, the following expansion holds:
\begin{align*}
\Phi(\textbf{q})&=R^{-m-\sigma}\varPi_{1}(\alpha,\textbf{q})+R^{-m-3}\varPi_{2}(\alpha,\textbf{q})+R^{-2m}\varPi_{3}(\alpha,\textbf{q})\\
&\quad+R^{-\min\{2-\eta,\frac{p+1-\eta}{2}\}m}\varPi_{4}(\alpha,\textbf{q})+R^{-m-3}(\ln K)^{2}\varPi_{5}(\alpha,\textbf{q},\dot{\textbf{q}},\ddot{\textbf{q}})\\
&\quad+K^{-\min\{2,\frac{p}{2}+1-\frac{\eta}{2},p-\frac{\eta}{2}\}m}(\ln K)^{2}\varPi_{6}(\alpha,\textbf{q})
+K^{-\min\{2,\,p-\eta\}m}(\ln K)^{-1}\varPi_{7}(\alpha,\textbf{q}),
\end{align*}
where $\varPi_j(\alpha,\textbf{q})$'s and $\varPi_{5}(\alpha,\textbf{q},\dot{\textbf{q}},\ddot{\textbf{q}})$ are smooth vector valued functions, which are uniformly bounded as $K\rightarrow\infty$. Moreover, $\varPi_{5}(\alpha,0,0,0)=0$.
\end{lemma}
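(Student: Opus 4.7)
The plan is to derive this expansion by simply collecting the estimates already established in Lemmas \ref{lem-4-2} and \ref{lem-4-3}, since by construction $\Phi(\alpha,\textbf{q})$ is precisely the totality of lower-order terms in the equation $\vec{\beta}(\alpha,\textbf{q},\gamma)=0$ after the leading $-a_0mR^{-m-2}T\textbf{q}$ has been extracted. Concretely, I would first note that by Lemma \ref{lem-M}, the identity
\[
M\vec{\beta}=\int_{\mathbb{R}^N}(E+L[\phi]+N(\phi))\,\tfrac{\partial U}{\partial\textbf{q}}\,dx-\gamma\int_{\mathbb{R}^N}\tfrac{\partial U}{\partial\alpha}\tfrac{\partial U}{\partial\textbf{q}}\,dx
\]
makes $\vec{\beta}=0$ equivalent to
\[
\int_{\mathbb{R}^N}(E+L[\phi]+N(\phi))\,\tfrac{\partial U}{\partial\textbf{q}}\,dx=\gamma\,\textbf{q}_1,
\]
which is the starting point for isolating $\Phi$.

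Next, I would substitute the expansion from Lemma \ref{lem-4-2} for $\int E\cdot\partial_{\textbf{q}}U\,dx$. This produces the leading $T\textbf{q}$ contribution together with the five remainder terms $R^{-m-\sigma}\varPi_{1}$, $R^{-m-3}\varPi_{2}$, $R^{-2m}\varPi_{3}$, $R^{-\min\{2-\eta,(p+1-\eta)/2\}m}\varPi_{4}$, and $R^{-m-3}(\ln K)^{2}\varPi_{5}$. These are exactly the first five summands in the claimed formula for $\Phi$. The property $\varPi_{5}(\alpha,0,0,0)=0$ is inherited from the corresponding statement in Lemma \ref{lem-4-2}, since when $\textbf{q}=0$ the discrete first and second differences $\dot{\textbf{q}},\ddot{\textbf{q}}$ also vanish and the quadratic-in-differences remainder collapses.

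Then I would bring in the two estimates of Lemma \ref{lem-4-3}: the $L[\phi]$-projection contributes the term $K^{-\min\{2,p/2+1-\eta/2,p-\eta/2\}m}(\ln K)^{2}\varPi_{6}$ and the $N(\phi)$-projection gives $K^{-\min\{2,p-\eta\}m}(\ln K)^{-1}\varPi_{7}$. The smoothness and uniform boundedness of the $\varPi_j$'s as $K\to\infty$ follow from their explicit origin: Taylor expansions of $w$, $V$ and $\Psi$ around the configuration, together with the fact that $\phi$ and $\widehat{\beta}$ are $C^1$ in $(\alpha,\textbf{q})$ with the bounds provided by Proposition \ref{prop-3-1}. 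Note that we should exchange $R$-scales and $K$-scales freely using $R\sim (m/2\pi)K\ln K$ from Lemma \ref{lem-d-R}, with any polynomial factors of $\ln K$ absorbed into adjacent terms.

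The step that requires the most care, but is not substantial, is bookkeeping the powers: one must verify that every error term generated in the proofs of Lemmas \ref{lem-4-2} and \ref{lem-4-3} is dominated (up to logarithmic factors) by one of the seven terms appearing in the statement, and that no new terms arise when passing from the matrix equation $M\vec{\beta}=\cdots$ to $\vec{\beta}=\cdots$ via the uniform bound $\|M^{-1}\|\leq C$ of Lemma \ref{lem-M}. Once this is checked, the lemma follows with no further computation.
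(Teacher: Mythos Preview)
Your proposal is correct and matches the paper's own treatment: the paper does not give a separate proof of this lemma but simply states that it follows by combining Lemma~\ref{lem-4-2} (the expansion of $\int E\,\partial_{\textbf q}U$) with Lemma~\ref{lem-4-3} (the projections of $L[\phi]$ and $N(\phi)$), which is exactly the collection-of-terms argument you outline. The only small over-elaboration is your remark about inverting $M$: since one is solving $\vec\beta=0$, equivalence with \eqref{beta-1} only uses that $M$ is nonsingular (Lemma~\ref{lem-M}), not any quantitative bound on $M^{-1}$.
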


Now we are going to solve \eqref{q-4-1} and then complete the proof of Proposition \ref{prop-4-1}.

\begin{proof}[Proof of Proposition~\ref{prop-4-1}]
By Lemma \ref{lem-T}, equation \eqref{q-4-1} is equivalent to
\begin{align*}
\textbf{q}=(a_0m)^{-1}T^{-1}_{\textbf{q}}\left[R^{m+2}\Phi(\alpha,\textbf{q})\right]=\mathcal{F}(\textbf{q}).
\end{align*}
By Lemma \ref{lem-Phi}, since $\min\{2-\eta,\frac{p+1-\eta}{2}\}\leq\min\{2,\,p-\eta\}\leq\min\{2,\frac{p}{2}+1-\frac{\eta}{2},p-\frac{\eta}{2}\}$ for $0<\eta<p-1$, we get
\begin{align*}
(a_0m)^{-1}R^{m+2}\Phi(\alpha,\textbf{q})=K^{-\mu}\widetilde{\varPi}(\alpha,\textbf{q})+(K^{-1}\ln K)\widetilde{\varXi}(\alpha,\textbf{q},\dot{\textbf{q}},\ddot{\textbf{q}}),
\end{align*}
where both $\widetilde{\varPi}$ and $\widetilde{\varXi}$ are smooth vector valued functions, which are uniformly bounded as $K$ tends to infinity. Moreover, $\widetilde{\varXi}(\alpha,0,0,0)=0$.

Hence by Lemma \ref{lem-T}, for $\|\textbf{q}\|_*\leq1/2$, we have
\begin{align*}
\|\mathcal{F}(\textbf{q})\|_*
\leq C\left(K^{-\mu}(\ln K)^2+K^{-1}(\ln K)^3\right)
\leq CK^{-\mu}(\ln K)^2,
\end{align*}
and
\begin{align*}
\|\mathcal{F}(\textbf{q})-\mathcal{F}(\mathring{\textbf{q}})\|_*
&\leq C\left(K^{-\mu}(\ln K)^2+K^{-1}(\ln K)^3\right)\|\textbf{q}-\mathring{\textbf{q}}\|_*
\leq\frac{1}{2}\|\textbf{q}-\mathring{\textbf{q}}\|_*,
\end{align*}
since $\|(T^{-1}_{\textbf{q}}-T^{-1}_{\mathring{\textbf{q}}})\textbf{b}\|_*\leq CKR^{-m}(\ln K)^2\|\textbf{b}\|_\infty\|\textbf{q}-\mathring{\textbf{q}}\|_*$. Therefore, $\mathcal{F}$ is a contraction mapping. By the Banach fixed point theorem, the result follows.

To show the differentiability of $\textbf{q}(\alpha)$. Consider the map $\mathcal{T}(\alpha,\textbf{q})=\textbf{q}-\mathcal{F}(\alpha,\textbf{q}):\mathbb{R}\times\mathbb{R}^{2K}\mapsto\mathbb{R}^{2K}$ of class $C^1$. Since $\frac{\partial\mathcal{F}}{\partial\textbf{q}}=O(K^{-\mu-1}(\ln K)^2)$, $\frac{\partial\mathcal{T}}{\partial\textbf{q}}\big|_{\alpha,\textbf{q}(\alpha)}=I-\frac{\partial\mathcal{F}}{\partial\textbf{q}}(\alpha,\textbf{q}(\alpha))$ is invertible, we get the differentiability of $\textbf{q}(\alpha)$.

Next we study the dependence of $\textbf{q}$ on $\alpha$. Assume that we have two solutions corresponding to two sets of parameters. One of them denoted by
\begin{align*}
\textbf{q}=(a_0m)^{-1}T^{-1}_{\textbf{q}}\left[R^{m+2}\Phi(\alpha,\textbf{q})\right],
\end{align*}
corresponds to $\alpha$; the other denoted by
\begin{align*}
\mathring{\textbf{q}}=(a_0m)^{-1}T^{-1}_{\mathring{\textbf{q}}}\left[R^{m+2}\Phi(\mathring{\alpha},\mathring{\textbf{q}})\right],
\end{align*}
corresponds to $\mathring{\alpha}$.
Assume that $R|\mathring{\alpha}-\alpha|\leq1/2$, by a direct computation and Lemma \ref{lem-T}, we have
\begin{equation*}
\|\textbf{q}-\mathring{\textbf{q}}\|_*\leq CK^{-\mu}(\ln K)^2(R|\mathring{\alpha}-\alpha|),
\end{equation*}
from which we get the desired result.
\end{proof}


\section{Proof of Theorem \ref{thm-0}: variational reduction}

In this section, our purpose is to achieve Step 2.B in the setting up of the problem and then complete the proof of Theorem \ref{thm-2}.

To solve $\gamma(\alpha)=0$ in Step 2.B, we will apply the variational reduction. To do this, we first introduce some notation. Let $\alpha\in\mathbb{R}$ and $\phi=\phi(x;\alpha,\textbf{q}(\alpha))$ be the function given in Proposition \ref{prop-4-1}, we define the reduced energy function by
\begin{equation}\label{def-F}
F(\alpha)=\mathcal{E}(U+\phi):\mathbb{R}\rightarrow\mathbb{R},
\end{equation}
where we write $U=U(x;\alpha,\textbf{q}(\alpha))$.

By \eqref{Q-1}, both $U$ and $\phi$ are $2\pi$ periodic in $\alpha$. Hence by Proposition \ref{prop-4-1}, the reduced energy function $F(\alpha)$ has the following property.
\begin{lemma}\label{lem-5-1}
The function $F(\alpha)$ is of class $C^1$ and satisfies $F(\alpha+2\pi)=F(\alpha)$ for every $\alpha\in\mathbb{R}$.
\end{lemma}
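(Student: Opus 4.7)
The plan is to deduce both assertions directly from the periodicity identity \eqref{Q-1}, the uniqueness statements in Proposition~\ref{prop-3-1} and Proposition~\ref{prop-4-1}, and a chain-rule argument.

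For the $C^1$-regularity, I would observe that the energy functional $\mathcal{E}$ is smooth on $H^1(\mathbb{R}^N)\cap L^{p+1}(\mathbb{R}^N)$, and Proposition~\ref{prop-4-1} already gives $\alpha\mapsto\textbf{q}(\alpha)$ of class $C^1$ and $\alpha\mapsto\phi(\cdot;\alpha,\textbf{q}(\alpha))$ of class $C^1$ into $\mathcal{B}_{**}$ (in fact with an $H^1$ bound via Remark after Lemma~\ref{lem-3-1}). The map $\alpha\mapsto U(x;\alpha,\textbf{q}(\alpha))$ is smooth in $\alpha$ because $Q_j(\alpha,\textbf{q}(\alpha))$ depends smoothly on its arguments (trigonometric functions of $\theta_j=\alpha+(j-1)2\pi/K$) and $w\in C^\infty$. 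Composing via the chain rule then gives $F\in C^1(\mathbb{R})$.

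For the periodicity, the key is that the trigonometric functions entering $\vec{n}_j(\alpha)$ and $\vec{t}_j(\alpha)$ are $2\pi$-periodic in $\alpha$, so by \eqref{eq-Q} we have $Q_j(\alpha+2\pi,\textbf{q})=Q_j(\alpha,\textbf{q})$ for every fixed $\textbf{q}$, which is exactly \eqref{Q-1}. Consequently the error $E$, the linear operator $L$, the nonlinearity $N$, the orthogonality functions $\mathcal{Z}_{Q_j}$, and the vector field $\partial U/\partial\textbf{q}$ all coincide at the parameter values $\alpha$ and $\alpha+2\pi$ (for the same $\textbf{q}$). Therefore the projected problem \eqref{eq-u-1} is literally the same problem at $\alpha$ and at $\alpha+2\pi$, so the uniqueness part of Proposition~\ref{prop-3-1} forces
\begin{equation*}
\phi(x;\alpha+2\pi,\textbf{q})=\phi(x;\alpha,\textbf{q}),\qquad\widehat{\beta}(\alpha+2\pi,\textbf{q})=\widehat{\beta}(\alpha,\textbf{q}).
\end{equation*}
In particular the multiplier $\vec{\beta}(\alpha,\textbf{q},\gamma)$ defined in \eqref{beta-gamma} is also $2\pi$-periodic in $\alpha$, and applying the uniqueness part of Proposition~\ref{prop-4-1} to the equation $\vec{\beta}=0$ with the constraint $\textbf{q}\perp\textbf{q}_0$ yields
\begin{equation*}
\textbf{q}(\alpha+2\pi)=\textbf{q}(\alpha),\qquad\gamma(\alpha+2\pi)=\gamma(\alpha).
\end{equation*}

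Combining these identities, $U(x;\alpha+2\pi,\textbf{q}(\alpha+2\pi))=U(x;\alpha,\textbf{q}(\alpha))$ and $\phi(x;\alpha+2\pi,\textbf{q}(\alpha+2\pi))=\phi(x;\alpha,\textbf{q}(\alpha))$, so $F(\alpha+2\pi)=\mathcal{E}(U+\phi)=F(\alpha)$. The main (and essentially only) point to check carefully is that the whole construction can be run with the same parameter $\textbf{q}$ at $\alpha$ and $\alpha+2\pi$; this is precisely why the parametrization \eqref{eq-Q} was set up through $\vec{n}_j(\alpha)$ and $\vec{t}_j(\alpha)$ rather than directly through the Cartesian coordinates of $Q_j$, and why the uniqueness clauses in Propositions~\ref{prop-3-1} and~\ref{prop-4-1} were emphasized. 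No further estimates are needed.
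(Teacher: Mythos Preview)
Your argument is correct and follows the same line as the paper, which simply notes before the lemma that ``By \eqref{Q-1}, both $U$ and $\phi$ are $2\pi$ periodic in $\alpha$. Hence by Proposition~\ref{prop-4-1}, the reduced energy function $F(\alpha)$ has the following property,'' and then states the result without further proof. You have just spelled out the details---using the uniqueness clauses of Propositions~\ref{prop-3-1} and~\ref{prop-4-1} to transfer the periodicity from $Q_j$ to $\phi$, $\textbf{q}(\alpha)$, and hence to $F$---that the paper leaves implicit.
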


With this notation, the next lemma shows that if $F(\alpha)$ has a critical point then $\gamma(\alpha)=0$ has a solution. In other words, after the Lyapunov-Schmidt reduction, the following lemma concerns the relation between the critical points of $F(\alpha)$ and those of the energy functional $\mathcal{E}(u)$.

\begin{lemma}\label{lem-5-2}
Under the assumption of Proposition \ref{prop-4-1}, there exists $K_0\in\mathbb{N}_+$ such that: for all integer $K\geq K_0$, if $\alpha_0$ be a critical point of $F(\alpha)$, then $\gamma(\alpha_0)=0$ and the corresponding function
\begin{align*}
u(x)=U(x;\alpha_0,\textbf{q}(\alpha_0))+\phi(x;\alpha_0,\textbf{q}(\alpha_0))
\end{align*}
is a solution of \eqref{eq-u}.
\end{lemma}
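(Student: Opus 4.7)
The strategy is to differentiate the reduced energy $F(\alpha)=\mathcal{E}(U+\phi)$ and exploit the projected equation \eqref{eq-u-2-a} to factor out $\gamma(\alpha)$. Explicitly, writing $u=U+\phi$ with $U=U(x;\alpha,\textbf{q}(\alpha))$ and $\phi=\phi(x;\alpha,\textbf{q}(\alpha))$, the chain rule yields
\begin{equation*}
F'(\alpha)=\int_{\mathbb{R}^N}S(u)\,\frac{du}{d\alpha}\,dx,\qquad \frac{du}{d\alpha}=\frac{\partial U}{\partial\alpha}+\frac{\partial U}{\partial\textbf{q}}\cdot\textbf{q}'(\alpha)+\frac{\partial \phi}{\partial\alpha}.
\end{equation*}
By \eqref{eq-u-2-a}, $S(u)=L[\phi]+E+N(\phi)=\gamma(\alpha)\,\partial_\alpha U$, so
\begin{equation*}
F'(\alpha)=\gamma(\alpha)\,A(\alpha),\qquad A(\alpha):=\int_{\mathbb{R}^N}\frac{\partial U}{\partial\alpha}\,\frac{du}{d\alpha}\,dx.
\end{equation*}
Thus it suffices to show that $A(\alpha)$ is uniformly bounded away from zero (for $K$ large), after which $F'(\alpha_0)=0$ forces $\gamma(\alpha_0)=0$. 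Plugging $\gamma(\alpha_0)=0$ back into \eqref{eq-u-2-a} gives $S(u)\equiv 0$; positivity of $u$ then follows from $U>0$ and the smallness of $\phi$ in $L^\infty$ (by Proposition~\ref{prop-3-1}), so $u$ is a genuine positive solution of \eqref{eq-u}.

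The core of the plan is the lower bound on $A(\alpha)$. I would split it as $A=A_1+A_2+A_3$ where
\begin{equation*}
A_1=\int\!\Bigl(\tfrac{\partial U}{\partial\alpha}\Bigr)^{2}dx,\quad A_2=\textbf{q}'(\alpha)^{T}\!\!\int\!\tfrac{\partial U}{\partial\textbf{q}}\,\tfrac{\partial U}{\partial\alpha}\,dx,\quad A_3=\int\!\tfrac{\partial U}{\partial\alpha}\,\tfrac{\partial \phi}{\partial\alpha}\,dx.
\end{equation*}
Using $\partial_\alpha U=(R\textbf{q}_0+\textbf{q}^{\perp})\cdot\partial_\textbf{q}U$ together with the matrix $M$ of Lemma~\ref{lem-M} (diagonal entries $c_0=\int(\partial_{x_1}w)^2>0$, off-diagonal $O(K^{-m}(\ln K)^{-m})$), the leading term is $A_1=R^{2}\textbf{q}_0^{T}M\textbf{q}_0+O(R\|\textbf{q}^{\perp}\|_\infty K)=c_0\,KR^{2}(1+o(1))$. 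For $A_2$, Proposition~\ref{prop-4-1} gives $\|\textbf{q}'(\alpha)\|_\infty\le CRK^{-\mu}(\ln K)^{2}$, so $|A_2|\le C KR^{2}\cdot K^{-\mu}(\ln K)^{2}=o(KR^{2})$. For $A_3$ I would use the Cauchy–Schwarz bound $|A_3|\le\|\partial_\alpha U\|_{L^2}\|\partial_\alpha \phi\|_{L^2}\lesssim RK^{1/2}\cdot\|\partial_\alpha\phi\|_{**}\cdot K$, and then invoke the estimate $R^{-1}\|\partial_\alpha\phi\|_{**}\le C(K^{-\min\{1,(p-\eta)/2\}m}(\ln K)^{-1/2})^{\min\{p-1,1\}}$ from Proposition~\ref{prop-3-1} to conclude $|A_3|=o(KR^{2})$. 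Combining these, $A(\alpha)=c_0 KR^{2}(1+o(1))>0$ uniformly in $\alpha$.

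The main obstacle is really just the bookkeeping for $A_3$: one must convert the weighted norm control on $\partial_\alpha\phi$ to an $L^2$ estimate via \eqref{norms-**-q}, and check that the resulting power of $K$ is strictly smaller than $K^{1}$ (which it is under \eqref{cond-m}, by the choice of $\mu$ and $\eta$). Once this lower bound on $A(\alpha)$ is in hand, the passage from $F'(\alpha_0)=0$ to $\gamma(\alpha_0)=0$ is immediate, the PDE $S(u)=0$ follows from \eqref{eq-u-2-a}, positivity is automatic for $K$ large since $\|\phi\|_{L^\infty}\to 0$ while $U\ge w_{Q_{j_0}}$ on each $\Omega_{j_0}^{1}$, and the orthogonality conditions guarantee $\phi\in H^{1}(\mathbb{R}^{N})$ so $u$ is the desired classical solution.
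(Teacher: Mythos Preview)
Your approach is correct and matches the paper's: differentiate $F$, use \eqref{eq-u-2-a} to write $F'(\alpha)=\gamma(\alpha)\,A(\alpha)$, and verify $A(\alpha)=c_0KR^2(1+o(1))\neq 0$ via the decomposition $\frac{\partial U}{\partial\alpha}=(R\textbf{q}_0+\textbf{q}^{\perp})\cdot\frac{\partial U}{\partial\textbf{q}}$ together with the estimates from Propositions~\ref{prop-3-1} and~\ref{prop-4-1}. One minor bookkeeping point: your chain-rule expansion of $\frac{du}{d\alpha}$ omits the term $\frac{\partial\phi}{\partial\textbf{q}}\cdot\textbf{q}'(\alpha)$ (the paper keeps it, writing $\partial_\alpha\phi=\frac{\partial\phi}{\partial\alpha}+\frac{\partial\phi}{\partial\textbf{q}}\cdot\partial_\alpha\textbf{q}$), but this extra piece is bounded exactly like your $A_2$ and $A_3$ and does not affect the conclusion.
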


\begin{proof}
By Proposition \ref{prop-4-1}, for $K$ sufficiently large and for every $\alpha\in\mathbb{R}$, $\phi=\phi(x;\alpha,\textbf{q}(\alpha))$ satisfies the equation
\begin{equation}\label{eq-5-1}
S(U+\phi)=\gamma(\alpha)\frac{\partial U}{\partial\alpha}.
\end{equation}

By the definition \eqref{def-F}, we obtain
\begin{align*}
F'(\alpha)=\int_{\mathbb{R}^N}S(U+\phi)(\partial_{\alpha}U+\partial_{\alpha}\phi)\,dx,
\end{align*}
where $\partial_{\alpha}U=\frac{\partial U}{\partial\alpha}+\frac{\partial U}{\partial\textbf{q}}\cdot\partial_\alpha\textbf{q}$ and $\partial_{\alpha}\phi=\frac{\partial \phi}{\partial\alpha}+\frac{\partial \phi}{\partial\textbf{q}}\cdot\partial_\alpha\textbf{q}$. Thus by using \eqref{eq-5-1},
\begin{align*}
F'(\alpha)=\gamma(\alpha)\int_{\mathbb{R}^N}\frac{\partial U}{\partial\alpha}(\partial_{\alpha}U+\partial_{\alpha}\phi)\,dx.\end{align*}
If $\alpha_0$ be a critical point of $F(\alpha)$, then $F'(\alpha_0)=0$. Hence to prove $\gamma(\alpha_0)=0$, it is sufficient to show that
\begin{align}\label{eq-5-4}
\int_{\mathbb{R}^N}\frac{\partial U}{\partial\alpha}(\partial_{\alpha}U+\partial_{\alpha}\phi)\,dx\neq 0.
\end{align}
In fact, by Proposition \ref{prop-4-1} and Proposition~\ref{prop-4-1}, we have
\begin{equation}
\partial_{\alpha}U+\partial_{\alpha}\phi=\frac{\partial U}{\partial\alpha}+\frac{\partial U}{\partial\textbf{q}}\cdot\partial_\alpha\textbf{q}+\frac{\partial \phi}{\partial\alpha}+\frac{\partial \phi}{\partial\textbf{q}}\cdot\partial_\alpha\textbf{q}.
\end{equation}
Recall that
\begin{align*}
\frac{\partial U}{\partial \alpha}&
=(R\textbf{q}_0+\textbf{q}^\perp)\cdot\frac{\partial U}{\partial\textbf{q}},
\end{align*}
hence by Proposition \ref{prop-3-1} and Proposition~\ref{prop-4-1}, we have
\begin{align*}
K^{-1}R^{-2}\int_{\mathbb{R}^N}\frac{\partial U}{\partial\alpha}(\partial_{\alpha}U+\partial_{\alpha}\phi)\,dx=(1+o(1))\int_{\mathbb{R}^N}(\partial_{x_1}w)^2\,dx,
\end{align*}
which implies \eqref{eq-5-4} and completes the proof.

\end{proof}

\begin{proof}[Proof of Theorems \ref{thm-2}]
By Lemma \ref{lem-5-1}, $F(\alpha)$ is $2\pi$ periodic and of class $C^1$. Hence it has at least two critical points (maximum and minimum points) in $[0,2\pi)$. Therefore, Theorem \ref{thm-2} follows from Lemma \ref{lem-5-2}.
\end{proof}


\section{Generalizations and discussion}

In this section we first give some slight extensions of the results proved in the previous sections. Finally we would like to discuss some related questions we do not answer in this paper.

\subsection{More general nonlinearities}

Unlike the minimization method, we do not use the homogeneous property of the nonlinearity of equation \eqref{eq-u}. Therefore, our argument can be applied to construct infinitely many positive solutions for a more general problem:
\begin{align*}
\left\{
\begin{array}{ll}
-\Delta u+V(x) u-f(u)=0\ \text{in}\ \mathbb{R}^N,
\vspace{1mm}\\
u>0\ \text{in}\ \mathbb{R}^N,\ u\in H^1(\mathbb{R}^N),
\end{array}\right.
\end{align*}
where $f:\mathbb{R}\rightarrow\mathbb{R}$ is at least $C^{1,\nu}(\mathbb{R})$ for some $\nu\in(0,1)$, and satisfies the following conditions:
\begin{enumerate}[($f_1$)]
\item $f(u)=0$ for $u\leq0$, $f(0)=f'(0)=0$;
\item The equation
\begin{align*}
\left\{
\begin{array}{ll}
-\Delta u+V_\infty u-f(u)=0\ \text{in}\ \mathbb{R}^N,
\vspace{1mm}\\
u>0\ \text{in}\ \mathbb{R}^N,\ u\in H^1(\mathbb{R}^N),
\end{array}\right.
\end{align*}
has a nondegenerate solution $w$, in the sense that
\[
\text{Ker}\left(-\Delta+1-f'(w)\right)\cap L^\infty(\mathbb{R}^N)=\text{Span}\left\{\partial_{x_1}w,\cdots,\partial_{x_N}w\right\}.
\]
\end{enumerate}

\begin{remark}
It is not hard to see that our argument can be used to deal with the homogeneous Dirichlet boundary condition problem of \eqref{eq-u} in $\mathbb{R}^N\setminus\overbar{\Omega}$, where $\Omega$ is a bounded domain in $\mathbb{R}^N$.
\end{remark}

\subsection{Sign-changing solutions} Suppose that \eqref{cond-wy} holds for some constants
\begin{align*}
V_\infty>0,\ a<0,\ \text{and}\ \min\big\{1,\frac{p-1}{2}\big\}m>2,\ \sigma>2.
\end{align*}
If $N\geq3$, we further assume \eqref{cond-even-1}, using almost the same argument, our method can be applied to construct infinitely many sign-changing solutions of the problem
\begin{align}
-\Delta u+V(x) u-|u|^{p-1}u=0\ \text{in}\ \mathbb{R}^N,\ u\in H^1(\mathbb{R}^N).
\end{align}
A similar result can be found in \cite{CDS-05} when $V(x)$ tends to $V_\infty$ from below with a suitable rate. We emphasize that our method can be applied to a more general non-even nonlinearity.

\subsection{Remarks on condition~\eqref{cond-m}}
In this subsection we consider the possible ways to improve the condition~\eqref{cond-m}. Recall that the key step in our method is to solve the following equation:
\begin{align*}
-a_0mR^{-m-2}T\textbf{q}+\Phi(\alpha,\textbf{q})=\gamma\,\textbf{q}_1,
\end{align*}
the property of $T$ has been described in Lemma~\ref{lem-T}. We pose the condition~\eqref{cond-m} such that $\Phi(\alpha,\textbf{q})$ is a smaller term comparing to $R^{-m-2}T\textbf{q}$. Hence, to refine the condition~\eqref{cond-m}, the estimate of $\Phi(\alpha,\textbf{q})$ is the key point. A better estimate on $\Phi(\alpha,\textbf{q})$ gives a weaker condition on $m$ and $\sigma$. For example, for $N=2$, if we assume that the following asymptotic behaviour of $V$ holds in the $C^1$ sense:
\begin{equation*}
V(x)=V_\infty+\frac{a}{|x|^m}+\frac{a_1(\theta)}{|x|^{m+1}}+O\Big(\frac{1}{|x|^{m+1+\sigma_1}}\Big),\ \text{as}\ |x|\rightarrow\infty,
\end{equation*}
where $a_1(\theta)$ is an $2\pi$ periodic smooth function. Here $(r,\theta)$ is the polar coordinate.  Then ``$\sigma>2$" in the condition~\eqref{cond-m} can be improved to be ``$\sigma_1>0$".

To get a better estimate of $\Phi(\alpha,\textbf{q})$, an improvement of approximation is needed. Recall that
\begin{align*}
E
&=\underbrace{\sum_{j=1}^K\left(V(x)-1\right)w_{Q_j}}_{E_1}-\underbrace{\Big\{\Big(\sum_{j=1}^Kw_{Q_j}\Big)^p-\sum_{j=1}^Kw_{Q_j}^p\Big\}}_{E_2}.
\end{align*}
The leading term of $E_1$ is given by $\sum_{j=1}^K\left(V(Q_j)-1\right)w_{Q_j}$, which is $O(R^{-m})$ by \eqref{cond-wy}. Moreover, it is known that $\varphi_0=-\frac{1}{p-1}w-\frac{1}{2}x\cdot\nabla w$ is the explicit solution of
\begin{align*}
L_0[\varphi_0]=-\Delta\varphi_0+\varphi_0-pw^{p-1}\varphi_0=w\ \text{in}\ \mathbb{R}^N,\ \text{and}\ \int_{\mathbb{R}^N}\varphi_0\nabla w\,dx=0.
\end{align*}
Thanks to the polynomial decay of $V$, we can improve the approximation and write $E_1$ into two parts: one is $O(R^{-m})$ having explicit form; and the other one is $O(R^{-m-1})$, which is enough for a better estimate of the part in $\Phi(\alpha,\textbf{q})$ from $E_1$.

However, for the term $E_2$, the situation is more difficult. Recall that in the proof of Claim 2 of Lemma~\ref{lem-error}, we show that $E_2\sim d^{-\frac{N-1}{2}}e^{-\min\{1,\frac{p}{2}\}d}$. Since $w(r)\sim r^{-\frac{N-1}{2}}e^{-r}$, subtracting the terms of order $d^{-\frac{N-1}{2}}e^{-\min\{1,\frac{p}{2}\}d}$, the next term  is $O(d^{-\frac{N-1}{2}-1}e^{-\min\{1,\frac{p}{2}\}d})$. It is not enough for a better estimate of the part in $\Phi(\alpha,\textbf{q})$ from $E_2$.

Except for getting a better estimate of $\Phi(\alpha,\textbf{q})$, another feasible way to improve the condition~\eqref{cond-m} is to apply min-max theorems to study the reduced energy functional $\mathcal{E}(U)$ and its small perturbations. A useful property is that the matrix $T$ has only one zero eigenvalue, $K-1$ negative eigenvalues, and $K$ positive eigenvalues.

\subsection{The anisotropic case}

Observe that the leading term in \eqref{cond-wy} is radial. Thus it is interesting to consider the fully anisotropic case of the problem \eqref{eq-u} for $N=2$.
\begin{openproblem}\label{question-1}
Do there still exist infinitely many positive solution of problem \eqref{eq-u} if there are constants $V_\infty>0$, $m>1$, $\sigma>0$ and a positive $2\pi$ periodic smooth function $a(\theta)$ such that
\begin{equation}\label{cond-dwy}
V(x)=V_\infty+\frac{a(\theta)}{|x|^m}+O\Big(\frac{1}{|x|^{m+\sigma}}\Big),\ \text{as}\ |x|\rightarrow\infty.
\tag{$V3$}
\end{equation}
Here $(r,\theta)$ is the polar coordinate.
\end{openproblem}

Inspired by the results in \cite{CDS-11,AW-12}, the answer is very likely yes. But in this situation, the idea of uniformly distribution of points on curves does not work. Indeed, let $\Gamma=(r(\theta),\theta)$ be a closed curve in the polar coordinate system. Denote its length by $L$ and its natural parameter by $s(\theta)$. If one puts $K$ points on the stretched curve $R\,\Gamma$ for some positive constant $R$. After some computations, we get the balancing condition on $\Gamma$ and $R$:
\begin{align}\label{sys-1}
\left\{\begin{array}{ll}
\big(\frac{1}{2}\int_{\mathbb{R}^N}w^2\,dy\big)R^{-m-1}=C\Psi\big(\frac{RL}{K}\big)\frac{L}{K},
\vspace{1mm}\\
C\Big\{ma(\theta(s))\gamma(s)-a'(\theta(s))\gamma^\perp(s)\Big\}|\gamma(s)|^{-m-2}+\gamma''(s)=0,
\end{array}
\right.
\end{align}
where $C>0$ is a constant, $L$ is the length of $\gamma$ and $\theta(s)$ is the inverse of $s(\theta)$. However, it can be proved that system~\eqref{sys-1} has a solution if and only if $a(\theta)\equiv\text{constant}$. Therefore, it is to be expected that the spikes cannot be uniformly distributed on a closed curve if $a(\theta)$ is not a constant function.

A feasible way to answer Question~\ref{question-1} is to develop a theory like \cite{AMPW-12}. But a more accurate reduction procedure would be required since the mutual angles between the adjacent rays connecting spikes and the origin goes to zero as $K$ tends to infinity.

\subsection{Optimal condition on the decay}

It seems that our argument here can only deal with the case of polynomial decay. Inspired by \cite{CDS-11,AW-12}, it is reasonable to believe that there are infinitely many positive solutions when the potential $V$ satisfies the following decay assumption:
\begin{equation}\label{cond-CDS}
\exists\,\bar{\eta}\in(0,\sqrt{V_\infty}):\ \lim_{|x|\rightarrow\infty}\left(V(x)-V_\infty\right)e^{\bar{\eta}|x|}=+\infty.
\tag{$V4$}
\end{equation}
Hence it is natural to ask the following question:
\begin{openproblem}\label{question-2}
Does $\mathcal{S}_V\neq\emptyset$ for any potential $V$ satisfying \eqref{V-infinity}? Is the condition~\eqref{cond-CDS} sufficient and necessary for $\#(\mathcal{S}_V)=\infty$, i.e., on the existence of infinitely many positive solution of problem \eqref{eq-u}?
\end{openproblem}


\subsection{Higher dimensions}

For the higher dimension case, i.e., $N\geq3$, as we have seen, our arguments still work under the weak symmetry condition~\eqref{cond-even-1}. It is natural to ask the following question:
\begin{openproblem}\label{question-3}
Does the week symmetry condition~\eqref{cond-even-1} can be dropped when $N\geq3$? Do the points can be distributed in a higher dimensional set, such as spheres in $\mathbb{R}^3$?
\end{openproblem}

\subsection{Higher dimensional concentration phenomena}

Next we turn to the higher dimensional concentration phenomena. Inspired by the results in \cite{AMN-03,dPKW-07,MMM-09,WWY-11} and \cite{M-09,BW-10}, it is interesting to ask the following question:

\begin{openproblem}\label{question-4}
Does there exist solution of problem \eqref{eq-u} concentrating on higher dimensional sets, e.g., curves? That is, does the Ambrosetti-Malchiodi-Ni conjecture in \cite{AMN-03} still hold without the small parameter $\varepsilon$, even in the radial symmetry case? If the answer is yes, are the solutions constructed in Theorem~\ref{thm-2} bifurcations sets?
\end{openproblem}

Under the condition~\eqref{V-infinity}, Question~\ref{question-4} is not easy even in the radial symmetry case. For example, assuming that $N=2$ and $V(x)$ is radially symmetric, if one try to construct a positive solution concentrating on a circle with radius $R$, a simple computation gives $V'(R)\sim1/R$, which is incompatible with $\lim_{|x|\rightarrow\infty}V(x)=V_\infty$.


\section{Appendix A: Circulant matrices and proof of Lemma \ref{lem-T}}

In this section we will prove Lemma \ref{lem-T}. To this end, we need some notation. Denote the $K$-dimensional complex vector space and the ring of $K\times K$ complex matrices by $\mathbb{C}^K$ and $\mathbb{M}_K$, respectively. Let $\textbf{b}=(b_1,b_2,\dots,b_K)\in\mathbb{C}^K$, we define a shift operator $\mathcal{S}:\mathbb{C}^K\rightarrow\mathbb{C}^K$ by
\begin{align*}
\mathcal{S}(b_1,b_2,\dots,b_K)=(b_K,b_1,\dots,b_{K-1}).
\end{align*}

\begin{definition} [cf. \cite{KS-12}]
The circulant matrix $B=\textit{circ}\{\mathbf{b}\}$ associated to the vector $\textbf{b}=(b_1,b_2,\dots,b_K)\in\mathbb{C}^K$ is the $K\times K$ matrix whose $n$th row is $\mathcal{S}^{n-1}\textbf{b}$:
\[
B=
\begin{pmatrix}
b_1 & b_2 & \cdots & b_{K-1} & b_K \\
b_K & b_1 &\cdots & b_{K-2} & b_{K-1} \\
\vdots  & \vdots  & \ddots & \vdots & \vdots\\
b_3 & b_4 & \cdots & b_1 & b_2\\
b_2 & b_3 & \cdots & b_K & b_1
\end{pmatrix}.
\]
We denote by $\textit{Circ}(K)\subset\mathbb{M}_K$ the set of all $K\times K$ complex circulant matrices.
\end{definition}

With this notation, both $A_1$ and $A_2$ in \eqref{eq-T} are $K\times K$ circulant matrices. In fact,
\begin{align*}
A_1=\textit{circ}\big\{(-2,1,0,\dots,0,1)\big\}\ \text{and}\
A_2=\textit{circ}\big\{(0,1,0,\dots,0,-1)\big\}.
\end{align*}
Let $\epsilon=e^{i\frac{2\pi}{K}}$ be a primitive $K$-th root of unity, we define
\begin{align*}
X_l=\frac{1}{\sqrt{K}}(1,\epsilon^{l-1},\epsilon^{2(l-1)},\dots,\epsilon^{(K-1)(l-1)})^T\in\mathbb{C}^K,\ \text{for}\ l=1,\dots,K,
\end{align*}
and
\[
P_K=\frac{1}{\sqrt{K}}
\begin{pmatrix}
1 & 1 & \cdots & 1 & 1 \\
1 & \epsilon &\cdots & \epsilon^{K-2} & \epsilon^{K-1} \\
\vdots  & \vdots  & \ddots & \vdots & \vdots\\
1 & \epsilon^{K-2} & \cdots & \epsilon^{(K-2)^2} & \epsilon^{(K-2)(K-1)}\\
1 & \epsilon^{K-1} & \cdots & \epsilon^{(K-1)(K-2)} & \epsilon^{(K-1)^2}
\end{pmatrix}\in\mathbb{M}_K.
\]

For the circulant matrix $B=\textit{circ}\{\mathbf{b}\}$, let
\begin{align}\label{eigenvalues}
\lambda_l=b_1+b_2\epsilon^{l-1}+\dots+b_K\epsilon^{(K-1)(l-1)},\ \text{for}\ l=1,\dots,K.
\end{align}
A simple calculation shows that $BX_l=\lambda_lX_l$. Hence $\lambda_l$ is an eigenvalue of $B$ with normalized eigenvector $X_l$. Since $\{X_1,\dots,X_K\}$ is a linearly independent set of vectors in $\mathbb{C}^K$, all of the eigenvalues of $B$ are given by $\lambda_l$, $l=1,\dots,K$.

\begin{lemma} [cf. \cite{KS-12}]
All circulant matrices have the same ordered set of orthonormal eigenvectors $\{X_l\}$. Moreover, $P_K$ is the diagonalizable matrix.
\end{lemma}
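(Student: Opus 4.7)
The plan is to verify directly the eigenvalue-eigenvector relation $BX_l=\lambda_l X_l$ announced (but only sketched as a ``simple calculation'') in the paragraph just before the lemma, and then to observe that the vectors $\{X_l\}_{l=1}^K$ are pairwise orthonormal, which immediately identifies the matrix $P_K$, whose $l$-th column is $X_l$, as the common (unitary) diagonalizer of all circulant matrices.

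First, I would carry out the matrix multiplication $BX_l$ row by row. For $B=\mathrm{circ}\{\mathbf{b}\}$, the $n$-th row of $B$ is the cyclic shift $\mathcal{S}^{n-1}\mathbf{b}$, so its $k$-th entry equals $b_{((k-n)\bmod K)+1}$. Then the $n$-th component of $BX_l$ is
\begin{equation*}
(BX_l)_n=\frac{1}{\sqrt{K}}\sum_{k=1}^{K}b_{((k-n)\bmod K)+1}\,\epsilon^{(k-1)(l-1)}.
\end{equation*}
Setting $j=((k-n)\bmod K)$ and using $\epsilon^{K}=1$ so that $\epsilon^{(k-1)(l-1)}=\epsilon^{(n-1)(l-1)}\epsilon^{j(l-1)}$, the sum telescopes into
\begin{equation*}
(BX_l)_n=\frac{1}{\sqrt{K}}\,\epsilon^{(n-1)(l-1)}\sum_{j=0}^{K-1}b_{j+1}\epsilon^{j(l-1)}=\lambda_l\cdot\frac{1}{\sqrt{K}}\epsilon^{(n-1)(l-1)}=\lambda_l(X_l)_n,
\end{equation*}
with $\lambda_l$ as defined in \eqref{eigenvalues}. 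Since the coefficients $b_1,\dots,b_K$ played no special role beyond their appearance in $\lambda_l$, this shows that the ordered family $\{X_l\}$ is a common eigenbasis for \emph{every} circulant matrix.

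Next, I would check orthonormality of $\{X_l\}$. A direct computation gives
\begin{equation*}
\langle X_l,X_{l'}\rangle=\frac{1}{K}\sum_{j=0}^{K-1}\epsilon^{j(l-1)}\overline{\epsilon^{j(l'-1)}}=\frac{1}{K}\sum_{j=0}^{K-1}\epsilon^{j(l-l')},
\end{equation*}
and by the standard geometric sum identity (the right-hand side equals $1$ if $l=l'$ and vanishes otherwise since $\epsilon^{l-l'}\neq 1$ when $1\le l\neq l'\le K$), this is $\delta_{l\,l'}$. Hence $\{X_l\}_{l=1}^K$ is an orthonormal basis of $\mathbb{C}^K$.

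Finally, the definition of $P_K$ shows that its $l$-th column is precisely $X_l$, so $P_K^*P_K=I$, i.e., $P_K$ is unitary, and the identity $BX_l=\lambda_l X_l$ proved above rewrites as $BP_K=P_K\,\mathrm{diag}(\lambda_1,\dots,\lambda_K)$, that is, $P_K^{-1}BP_K=\mathrm{diag}(\lambda_1,\dots,\lambda_K)$ for every $B\in\mathit{Circ}(K)$. No real obstacle is expected; the only mild care point is the indexing convention for cyclic shifts in the definition of $\mathrm{circ}\{\mathbf{b}\}$, which must be tracked consistently when extracting the factor $\epsilon^{(n-1)(l-1)}$ in the first computation.
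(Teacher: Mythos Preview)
Your proposal is correct and follows exactly the route the paper indicates: the paper itself does not give a separate proof of this lemma but merely records, just before the statement, that ``a simple calculation shows that $BX_l=\lambda_l X_l$'' and that $\{X_1,\dots,X_K\}$ is linearly independent, then cites \cite{KS-12}. You have simply written out that simple calculation in full and added the (standard) orthonormality check, so there is nothing to compare.
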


Using these notation, we study the invertibility of $T$.

\begin{lemma}\label{lem-T-0}
There is an $K_0\in\mathbb{N}$ such that for all $K\geq K_0$ and every $\textbf{b}\in\mathbb{R}^{2K}$, there exists a unique vector $\textbf{q}\in\mathbb{R}^{2K}$ and a unique constant $\gamma\in\mathbb{R}$ such that
\begin{equation}
T\textbf{q}=\textbf{b}+\gamma\,\textbf{q}_0,\ \textbf{q}\perp\,\textbf{q}_0.
\end{equation}
Moreover, there is a positive constant $C$ which is independent of $K$ such that
\begin{equation}\label{q-b}
\|\textbf{q}\|_2\leq C\|\textbf{b}\|_2,\ \|\dot{\textbf{q}}\|_2\leq C(\ln K)^{1/2}\|\textbf{b}\|_2,\ \text{and}\ \|\ddot{\textbf{q}}\|_2\leq C(\ln K)^{3/2}\|\textbf{b}\|_2.
\end{equation}
Furthermore, the number of zero (negative, positive) eigenvalues of $T$ is 1 ($K-1$, $K$), respectively.
\end{lemma}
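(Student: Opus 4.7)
The plan is to simultaneously diagonalize $T$ by the discrete Fourier transform on $\mathbb{Z}/K\mathbb{Z}$. Since $A_1$ and $A_2$ are circulant, they share the orthonormal eigenbasis $\{X_l\}_{l=1}^K$, and formula \eqref{eigenvalues} gives
\[
\lambda_l(A_1)=-4\sin^2(\theta_l/2),\qquad \lambda_l(A_2)=2i\sin\theta_l,\qquad \theta_l=\tfrac{2\pi(l-1)}{K}.
\]
Conjugating $T$ by $\operatorname{diag}(P_K,P_K)$ and reordering rows and columns reduces $T$ to the direct sum of $K$ independent $2\times 2$ blocks
\[
T_l=\begin{pmatrix} c_1\lambda_l(A_1)+c_4 & c_2\lambda_l(A_2)\\ -c_2\lambda_l(A_2) & c_3\lambda_l(A_1)\end{pmatrix},\qquad l=1,\dots,K.
\]
Although $T_l$ is not Hermitian, a further similarity by $\operatorname{diag}(1,i)$ renders it Hermitian, so each $T_l$ has two real eigenvalues.

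First I would isolate the kernel. For $l=1$ both $\lambda_l$'s vanish, so $T_1=\operatorname{diag}(c_4,0)$; its null eigenvector is $(0,1)^T$, which corresponds under the inverse Fourier transform to the constant tangential field, i.e.\ to $\textbf{q}_0$. For $l\geq 2$, a direct computation with $s=\sin(\theta_l/2)$ and $c=\cos(\theta_l/2)$ yields
\[
\det T_l=\frac{s^2 K^2}{\pi^2}\Bigl[-\widehat{d}\,\frac{K^2 s^2}{\pi^2}+(\widehat{d}-1)^2 s^2-(m-1)\widehat{d}-1\Bigr],
\]
and a case analysis over the three regimes $K^2 s^2\lesssim 1$, $K^2 s^2\sim m$, and $K^2 s^2\gg m$ shows that, for $K$ large, the bracket is uniformly negative with $|\det T_l|\gtrsim s^2 K^2\,\widehat{d}\,(K^2 s^2+m)$. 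In particular $\det T_l<0$ while $\operatorname{tr} T_l>0$, so each such block contributes one positive and one negative real eigenvalue. Together with $T_1$ this gives exactly $K$ positive, $K-1$ negative, and $1$ zero eigenvalue of $T$, with $\ker T=\operatorname{span}(\textbf{q}_0)$.

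To solve $T\textbf{q}=\textbf{b}+\gamma\textbf{q}_0$ under $\textbf{q}\perp\textbf{q}_0$, one works block-by-block on the Fourier side. Writing $\widehat{\textbf{q}}_l,\widehat{\textbf{b}}_l\in\mathbb{C}^2$ for the $l$-th Fourier modes, the equation $T_l\widehat{\textbf{q}}_l=\widehat{\textbf{b}}_l+\gamma\widehat{\textbf{q}}_{0,l}$ is uniquely solvable for $l\geq 2$ by inverting $T_l$. For $l=1$, matching the second component in $T_1=\operatorname{diag}(c_4,0)$ forces a unique choice of $\gamma=-\tfrac{1}{K}\sum_j b_{K+j}$, and the orthogonality constraint then pins down $\widehat{\textbf{q}}_{1,2}=0$ and $\widehat{\textbf{q}}_{1,1}=\widehat{\textbf{b}}_{1,1}/c_4$.

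For the norm estimates \eqref{q-b}, the determinantal bound together with the bounded growth of the adjugate entries gives $\|T_l^{-1}\|\lesssim (\widehat{d}+K^2 s^2)/[s^2 K^2\widehat{d} (K^2 s^2+m)]$. Multiplying $\widehat{\textbf{q}}_l$ by the Fourier symbols of the forward and second-order central difference operators, namely $(K/(2\pi))(e^{i\theta_l}-1)$ and $-(K^2/\pi^2)\sin^2(\theta_l/2)$, and summing in $\ell^2$ via Parseval yields the three estimates in \eqref{q-b}; the factor $\widehat{d}\sim m\ln K$ in the denominator absorbs the logarithmic losses that appear. The main obstacle is the uniform control of $|\det T_l|$ at low frequencies $l-1=O(\sqrt{m})$, where the three terms in the bracket are in delicate balance; once this lower bound is established, the rest is bookkeeping, and the ordering of the spectrum follows directly from the signs of $\det T_l$ and $\operatorname{tr}T_l$.
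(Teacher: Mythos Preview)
Your approach is essentially the same as the paper's: both diagonalize $T$ via the discrete Fourier transform into $K$ independent $2\times 2$ blocks $T_l$, compute the trace $\alpha_l=(c_1+c_3)\lambda_{1,l}+c_4>0$ and determinant $\beta_l=\det T_l\le 0$ of each block (your formula for $\det T_l$ coincides with the paper's $\beta_l$), read off the eigenvalue signs, and identify the kernel with $\textbf{q}_0$. The only cosmetic differences are that the paper obtains the sign of $\beta_l$ directly by collecting terms (no case analysis in $K^2s^2$ is needed, since $\widehat d\,K^2/\pi^2-(\widehat d-1)^2>0$ and $(m-1)\widehat d+1>0$ for large $K$), and for the derivative bounds the paper rewrites the system in terms of $\widehat f_j=f_{j+1}-f_j$, $\widehat g_j=g_{j+1}-g_j$ and repeats the block-eigenvalue analysis, which on the Fourier side is exactly your multiplication by the difference symbols followed by Parseval.
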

\begin{proof}

Note that    \eqref{q-b} is the Euclidean norm, hence it suffices to perform the analysis of the eigenvalues. To this end, first by \eqref{eigenvalues}, the eigenvalues of $A_1$ are
\begin{align}\label{eigenvalues-A1}
\lambda_{1,l}=-2+\epsilon^{l-1}+\epsilon^{(K-1)(l-1)}=-4\sin^2\frac{(l-1)\pi}{K},\ l=1,\dots,K,
\end{align}
and the eigenvalues of $A_2$ are
\begin{align}\label{eigenvalues-A2}
\lambda_{2,l}=\epsilon^{l-1}-\epsilon^{(K-1)(l-1)}=2i\sin\frac{2(l-1)\pi}{K},\ l=1,\dots,K.
\end{align}

Write $\textit{diag}(c_1,\dots,c_K)$ for a diagonal matrix whose diagonal entries starting in the upper left corner are $c_1,\dots,c_K$. Denote the diagonal matrix of $A_1$ and $A_2$ by
\begin{align*}
D_1=\textit{diag}(\lambda_{1,1},\lambda_{1,2},\dots,\lambda_{1,K})\ \text{and}\
D_2=\textit{diag}(\lambda_{2,1},\lambda_{2,2},\dots,\lambda_{2,K}),\ \text{respectively.}
\end{align*}
Since $P_K$ is the diagonalizable matrix for circulant matrices, we have
\begin{align}\label{matrix-P}
P^{-1}TP=
\begin{pmatrix}
P_K^{-1} & 0\vspace{2mm}\\
0 & P_K^{-1}
\end{pmatrix}
T
\begin{pmatrix}
P_K & 0\vspace{2mm}\\
0 & P_K
\end{pmatrix}
=\begin{pmatrix}
c_1D_1+c_4I & c_2D_2\vspace{2mm}\\
-c_2D_2 & c_3D_1
\end{pmatrix}.
\end{align}
Since the matrix $T$ is real and symmetric, all its eigenvalues are real and satisfy the equations
\begin{align}\label{eq-Lambda}
\Lambda^2-\big[(c_1+c_3)\lambda_{1,l}+c_4\big]\Lambda+\big(c_1\lambda_{1,l}+c_4\big)(c_3\lambda_{1,l})+c_2^2\lambda_{2,l}^2=0,
\end{align}
for $l=1,\dots,K$.
Let
\begin{align*}
\alpha_l=(c_1+c_3)\lambda_{1,l}+c_4,\ \text{and}\
\beta_l=\big(c_1\lambda_{1,l}+c_4\big)(c_3\lambda_{1,l})+c_2^2\lambda_{2,l}^2,\ \forall\,l=1,\dots,K.
\end{align*}
Then by \eqref{eq-c1-c4}, \eqref{eigenvalues-A1} and \eqref{eigenvalues-A2}, we have
\begin{align*}
\alpha_l=(\widehat{d}-1)\frac{K^2}{\pi^2}\sin^2\frac{(l-1)\pi}{K}+(\widehat{d}-m-1)>0,
\end{align*}
and
\begin{align*}
\beta_l=-\Big\{\Big(\widehat{d}\frac{K^2}{\pi^2}-(\widehat{d}-1)^2\Big)\sin^2\frac{(l-1)\pi}{K}
+(m-1)\widehat{d}+1\Big\}\frac{K^2}{\pi^2}\sin^2\frac{(l-1)\pi}{K}\leq0.
\end{align*}
Denote the solutions of \eqref{eq-Lambda} by $\Lambda_{1,l}$ and $\Lambda_{2,l}$ with $\Lambda_{1,l}\leq\Lambda_{2,l}$ for $l=1,\dots,K$. Then
\begin{align*}
\Lambda_{1,l}=\frac{\alpha_l}{2}\Big(-\sqrt{1-\frac{4\beta_l}{\alpha_l^2}}+1\Big)\leq0,\ \Lambda_{2,l}=\frac{\alpha_l}{2}\Big(\sqrt{1-\frac{4\beta_l}{\alpha_l^2}}+1\Big)>0,\ \forall\,l=1,\dots,K.
\end{align*}
In particular, for $l=1$, we have
\begin{align}
\Lambda_{1,1}=0,\
\Lambda_{2,1}=\widehat{d}-m-1.
\end{align}
For $l=2,\dots,K$, by Lemma~\ref{lem-d-R}, we have
\begin{align*}
-\frac{4\beta_l}{\alpha_l^2}
&\leq\frac{\Big(\widehat{d}\frac{K^2}{\pi^2}\sin^2\frac{(l-1)\pi}{K}
+m\widehat{d}\Big)\frac{4K^2}{\pi^2}\sin^2\frac{(l-1)\pi}{K}}
{\Big(\frac{\widehat{d}}{2}\frac{K^2}{\pi^2}\sin^2\frac{(l-1)\pi}{K}+\frac{\widehat{d}}{2}\Big)^2}
\leq\frac{C}{\widehat{d}},
\end{align*}
and
\begin{align*}
-\frac{4\beta_l}{\alpha_l^2}
&\geq\frac{\Big(\frac{\widehat{d}}{2}\frac{K^2}{\pi^2}\sin^2\frac{(l-1)\pi}{K}\Big)\frac{4K^2}{\pi^2}\sin^2\frac{(l-1)\pi}{K}}
{\Big(\widehat{d}\frac{K^2}{\pi^2}\sin^2\frac{(l-1)\pi}{K}+\widehat{d}\Big)^2}
\geq\frac{C}{\widehat{d}},\ \forall\,l=2,\dots,K,
\end{align*}
where $C$ is a positive constant.
Therefore, for all $l=2,\dots,K$,
\begin{align*}
-\Lambda_{1,l}\geq\frac{\widehat{d}}{2}\cdot\frac{C}{\widehat{d}}\geq\frac{1}{2}C,\ \text{and}\
\Lambda_{2,l}\geq\alpha_l\geq\frac{1}{2}\widehat{d},\ \text{for some constant}\ C>0,
\end{align*}
from which we get $\|\textbf{q}\|_2\leq C\|\textbf{b}\|_2$.

Define $\widehat{f}_j=f_{j+1}-f_j$ and $\widehat{g}_j=g_{j+1}-g_j$. Then
\begin{align*}
\left\{
\begin{array}{ll}
c_1(\widehat{f}_j-\widehat{f}_{j-1})+c_2(\widehat{g}_j+\widehat{g}_{j-1})=\phi_j-c_4f_j,
\vspace{1mm}\\
-c_2(\widehat{f}_j+\widehat{f}_{j-1})+c_3(\widehat{g}_j-\widehat{g}_{j-1})=\varphi_j,
\end{array}
\right.
\end{align*}
where $\phi_j=\textbf{b}_j$ and $\varphi_j=\textbf{b}_{K+j}$ for $j=1,\dots,K$.
By a similar argument in the proof of $\|\textbf{q}\|_2\leq C\|\textbf{b}\|_2$, we can get
\[
\|\dot{\textbf{q}}\|_2\leq C(\ln K)^{1/2}\|\textbf{b}\|_2.
\]
When this is done, let $\widetilde{f}_j=f_{j+1}-2f_j+f_{j-1}$ and $\widetilde{g}_j=g_{j+1}-2g_j+g_{j-1}$. Then
\begin{align*}
\left\{
\begin{array}{ll}
c_1\widetilde{f}_j=\phi_j-c_4f_j-c_2(\widehat{g}_j+\widehat{g}_{j-1}),
\vspace{1mm}\\
c_3\widetilde{g}_j=\varphi_j+c_2(\widehat{f}_j+\widehat{f}_{j-1}).
\end{array}
\right.
\end{align*}
Using a similar argument, by the definition of $c_j$'s, we get
\[
\|\ddot{\textbf{q}}\|_2\leq C(\ln K)^{3/2}\|\textbf{b}\|_2.
\]
\end{proof}

Now we are going to prove Lemma \ref{lem-T}. An important observation is that the system $T\textbf{q}=\textbf{b}$ can be seen as the discretization of the following continuous system:
\begin{align}\label{cont}
\left\{
\begin{array}{lll}
-(m+1)f(\theta)+(f''-g')(\theta)+\widehat{d}(f+g')(\theta)=\phi(\theta),\quad \theta\in(0,2\pi),
\vspace{1mm}\\
g(\theta)+(f'-g)(\theta)-\widehat{d}(f'+g'')(\theta)=\varphi(\theta),\quad \theta\in(0,2\pi),
\vspace{1mm}\\
f(0)=f(2\pi),\ f'(0)=f'(2\pi),\ g(0)=g(2\pi),\ g'(0)=g'(2\pi).
\end{array}
\right.
\end{align}

\begin{lemma}\label{lem-8-1}
For $K$ sufficiently large, given $\phi,\varphi$ satisfying $\int_0^{2\pi}\varphi=0$, the system \eqref{cont} has a unique solution $(f,g)$ satisfying $\int_0^{2\pi}g=0$. Moreover, there exists a constant $C>0$ such that
\begin{equation}
\|f\|_{C^2([0,2\pi])}+\|g\|_{C^2([0,2\pi])}\leq C\left(\|\phi\|_{C^0([0,2\pi])}+\|\varphi\|_{C^0([0,2\pi])}\right).
\end{equation}
\end{lemma}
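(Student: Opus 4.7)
The plan is to solve the constant-coefficient periodic system \eqref{cont} via Fourier series on $S^1$. Expanding each function as $f(\theta) = \sum_{n \in \mathbb{Z}}\hat f_n e^{in\theta}$ and similarly for $g, \phi, \varphi$, the system decouples mode-by-mode into a $2\times 2$ algebraic system whose determinant is
\[ D_n = -n^2\bigl[\widehat d(n^2+m-1)+1\bigr]. \]
The zero mode produces the compatibility condition $\hat\varphi_0 = 0$ (the assumed hypothesis $\int\varphi = 0$), fixes $\hat f_0 = \hat\phi_0/(\widehat d - m - 1)$, and leaves $\hat g_0$ free to be pinned down to $0$ by the normalization $\int g = 0$. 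For $n\neq 0$, since $m > 1$ and $\widehat d$ is large, $D_n \neq 0$, so Cramer's rule yields explicit formulas for $\hat f_n,\hat g_n$. A direct inspection of these formulas shows the key cancellation: $n^2\hat f_n = \bigl(\widehat d n^2 \hat\phi_n - i(\widehat d-1)n\hat\varphi_n\bigr)/[\widehat d(n^2+m-1)+1]$ stays uniformly bounded by $|\hat\phi_n|+|\hat\varphi_n|/|n|$ as $\widehat d \to \infty$, and similarly for $n^2\hat g_n$; this gives $L^2$ (and $H^2$) bounds on $f$ and $g$ with constants independent of $\widehat d$.

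To promote these bounds to the $C^2$ estimate demanded by the lemma (which is nontrivial because $\phi,\varphi$ are only continuous), I would eliminate $g$ and reduce to a single scalar ODE. Integrating the second equation of \eqref{cont} once yields
\[ g'(\theta) = h_0 + (\widehat d^{-1}-1)\bigl(f(\theta)-f(0)\bigr) - \widehat d^{-1}\int_0^\theta \varphi(s)\,ds, \]
where $h_0 := g'(0)$ is a free constant. Substituting into the first equation and simplifying the coefficient of $f$ gives
\[ f''(\theta) - \mu^2 f(\theta) = \widetilde\phi(\theta), \qquad \mu^2 := m - 1 + \widehat d^{-1}, \]
where $\widetilde\phi \in C^0(S^1)$ depends affinely on $\phi$, on the antiderivative of $\varphi$, and on the two scalars $f(0), h_0$. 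Because the theorem's hypothesis forces $m > 2$, we have $\mu^2 \geq m - 1 > 1$ uniformly in $\widehat d$, and the periodic Green's function $G_\mu$ of $\partial_\theta^2 - \mu^2$ satisfies $\|G_\mu\|_{L^1} = 1/\mu^2$. Hence $\|f\|_{C^2} \leq C\|\widetilde\phi\|_{C^0}$ with $C$ independent of $\widehat d$. The constants $f(0)$ and $h_0$ are then determined by two scalar conditions: the periodicity of $g$ (i.e.\ $\int_0^{2\pi}g' = 0$) and the normalization $\int_0^{2\pi}g = 0$. Substituting the Green's-function representation of $f$ at $\theta = 0$ and into $\int_0^{2\pi}g'\,d\theta$ produces a $2\times 2$ linear system for $(f(0), h_0)$. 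Once this is solved, $g$ is recovered from $g' = h$ and $g'' = (\widehat d^{-1}-1)f' - \widehat d^{-1}\varphi$, giving the corresponding $C^2$ bound on $g$.

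The principal obstacle is proving uniform-in-$\widehat d$ solvability of the $2\times 2$ system for $(f(0), h_0)$, which is the one-dimensional continuous analogue of Lemma \ref{lem-T-0} (invertibility of $T$). Several of its entries naively grow like $\widehat d$, but the leading-order-in-$\widehat d$ terms cancel by exactly the same mechanism that renders $n^2\hat f_n$ uniformly bounded in the Fourier computation above. The limit coefficient matrix (obtained by letting $\widehat d \to \infty$) is explicitly nonsingular---this can be checked directly by examining the $n = 0, \pm 1$ Fourier modes of the reduced ODE---so a perturbation argument yields uniform invertibility. Together with the Green's-function estimate for the scalar ODE, this completes the proof and gives the stated $C^2$ bound with a constant independent of $\widehat d$ (equivalently, of $K$).
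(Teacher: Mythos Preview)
Your reduction to the scalar equation $f'' - \mu^2 f = \widetilde\phi$ with $\mu^2 = m-1+\widehat d^{-1}$ is exactly the heart of the paper's proof, and your Fourier computation for existence/uniqueness is a legitimate warm-up. The substantive difference is in how the free constants are handled. You carry along $f(0)$ and $h_0=g'(0)$ as unknowns inside $\widetilde\phi$, which forces you into a $2\times2$ linear system whose uniform-in-$\widehat d$ invertibility you correctly flag as ``the principal obstacle'' and only sketch. The paper avoids this obstacle entirely by a cleaner change of variables: setting $h=\widehat d(f+g')$, the second equation integrates to $h=f-\int_0^\theta\varphi-c_h$ with a \emph{single} constant $c_h$, and the single undetermined quantity that survives in the scalar ODE is $\int_0^{2\pi}f$ rather than the pair $(f(0),h_0)$. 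Integrating the scalar ODE over $[0,2\pi]$ then gives $\int_0^{2\pi}f=(\widehat d-1-m)^{-1}\int_0^{2\pi}\phi$ explicitly, so the right-hand side is manifestly bounded by $\|\phi\|_{C^0}+\|\varphi\|_{C^0}$ with a constant independent of $\widehat d$, and no $2\times2$ system or perturbation argument is needed. (Also, a small slip: your ``two scalar conditions'' are really consistency of $f$ at $\theta=0$ and periodicity of $g$; the normalization $\int g=0$ only fixes the integration constant when you recover $g$ from $g'$, and does not constrain $f(0)$ or $h_0$.)

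In short, your argument is salvageable, but the paper's substitution collapses your $2\times2$ system to a single scalar equation solved by one integration, which is what makes the uniform $C^2$ bound immediate.
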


\begin{proof}
Let $h=\widehat{d}(f+g')$, then system \eqref{cont} becomes
\begin{align}\label{eq-f-h}
\left\{
\begin{array}{lll}
f''-mf+\frac{\widehat{d}-1}{\widehat{d}}h=\phi,\\
f'-h'=\varphi.\\
f,h\ \text{are}\ 2\pi\ periodic.
\end{array}
\right.
\end{align}
Since $\int_0^{2\pi}\varphi=0$, from the second equation we get
\begin{equation}\label{h}
h(\theta)=f(\theta)-\int_0^\theta\varphi-c_h.
\end{equation}
Here we take $c_h=\frac{1-\widehat{d}}{2\pi}\int_0^{2\pi}f-\frac{1}{2\pi}\int_0^{2\pi}\int_0^\theta\varphi$
such that $-\int_0^{2\pi}f+\widehat{d}^{-1}\int_0^{2\pi}h=0$. Hence $g$ can be solved by
\begin{equation*}
g(\theta)=-\int_0^\theta f+\widehat{d}^{-1}\int_0^\theta h+c_g,
\end{equation*}
where we take $c_g=\frac{1}{2\pi}\int_0^{2\pi}\int_0^\theta f-\widehat{d}^{-1}\frac{1}{2\pi}\int_0^{2\pi}\int_0^\theta h$ such that $\int_0^{2\pi}g=0$.

By \eqref{h} the first equation in \eqref{eq-f-h} becomes
\begin{equation*}
f''-(m-1+\frac{1}{\widehat{d}})f=\phi+\frac{\widehat{d}-1}{\widehat{d}}\left[\int_0^\theta\varphi-\frac{1}{2\pi}\int_0^{2\pi}\int_0^\theta\varphi\right]-\frac{(\widehat{d}-1)^2}{2\pi \widehat{d}}\int_0^{2\pi}f.
\end{equation*}
To solve the above equation, we first integrate it over $[0,2\pi]$ to get
$
\int_0^{2\pi}f=\frac{1}{\widehat{d}-1-m}\int_0^{2\pi}\phi.
$
Thus
\begin{equation*}
f''-(m-1+\frac{1}{\widehat{d}})f=\left[\phi-\frac{(\widehat{d}-1)^2}{\widehat{d}(\widehat{d}-1-m)}\frac{1}{2\pi}\int_0^{2\pi}\phi\right]+\frac{\widehat{d}-1}{\widehat{d}}\left[\int_0^\theta\varphi-\frac{1}{2\pi}\int_0^{2\pi}\int_0^\theta\varphi\right].
\end{equation*}
Note that $m-1+\frac{1}{\widehat{d}}>0$, by the boundary condition, $f$ is uniquely given and satisfies $\|f\|_{C^2([0,2\pi])}\leq C\left(\|\phi\|_{C^0([0,2\pi])}+\|\varphi\|_{C^0([0,2\pi])}\right)$. Therefore,
\begin{equation*}
g(\theta)=-\int_0^\theta f+\widehat{d}^{-1}\int_0^\theta h+c_g,
\end{equation*}
satisfies the same inequality, where\begin{equation*}
h(\theta)=f(\theta)+\frac{(\widehat{d}-1)}{2\pi(\widehat{d}-1-m)}\int_0^{2\pi}\phi-\left[\int_0^\theta\varphi-\frac{1}{2\pi}\int_0^{2\pi}\int_0^\theta\varphi\right].
\end{equation*}
\end{proof}

\begin{remark}
Let $c=\sqrt{m-1+\frac{1}{\widehat{d}}}$, by the equation of $f$, we can get
\[
f(\theta)=\int_0^{2\pi}\Big[\frac{(\widehat{d}-1)^2}{\widehat{d}(\widehat{d}-m-1)}\frac{1}{2\pi c^2}-G_0(\theta,s)\Big]\phi(s)\,ds
+\int_0^{2\pi}\frac{\widehat{d}-1}{\widehat{d}}\Big[\int_0^tG_0(\theta,t)-\frac{s}{2\pi c^2}\Big]\varphi(s)\,ds,
\]
where
\begin{align*}
G_0(\theta,s)=
\left\{
\begin{array}{ll}
\frac{1}{2c(e^{2\pi c}-1)}\big[e^{2\pi c}e^{c(\theta-s)}+e^{-c(\theta-s)}\big],\ &\text{if}\ \theta\leq s,\\
\frac{1}{2c(e^{2\pi c}-1)}\big[e^{c(\theta-s)}+e^{2\pi c}e^{-c(\theta-s)}\big],\ &\text{if}\ \theta> s.\\
\end{array}
\right.
\end{align*}
Actually there is a Green's matrix
\begin{equation*}
G(\theta,t)=
\begin{pmatrix}
G_{11}(\theta,s) & G_{12}(\theta,s)
\vspace{2mm}\\
G_{21}(\theta,s) & G_{22}(\theta,s)
\end{pmatrix}
\end{equation*}
such that
\begin{equation*}
f(\theta)=\int_0^{2\pi}G_{11}(\theta,s)\phi(s)\,ds+\int_0^{2\pi}G_{12}(\theta,s)\varphi(s)\,ds,
\end{equation*}
and
\begin{equation*}
g(\theta)=\int_0^{2\pi}G_{21}(\theta,s)\phi(s)\,ds+\int_0^{2\pi}G_{22}(\theta,s)\varphi(s)\,ds.
\end{equation*}
\end{remark}

\begin{lemma}\label{lem-T-1}
Under the assumption of Lemma~\ref{lem-T-0}, there is a positive constant $C$ which is independent of $K$ such that
\begin{equation}\label{q-b}
\|\textbf{q}\|_*\leq C(\ln K)^2\|\textbf{b}\|_\infty.
\end{equation}
\end{lemma}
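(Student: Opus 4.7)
The plan is to upgrade the $L^2$ estimates of Lemma~\ref{lem-T-0} to $L^\infty$ estimates via a discrete Gagliardo--Nirenberg inequality on the periodic lattice $\mathbb{Z}/K\mathbb{Z}$, and then to control the second discrete derivative $\ddot{\vec{\textbf{q}}}$ through a direct inspection of the two equations encoded in $T\vec{\textbf{q}}=\textbf{b}+\gamma\textbf{q}_1$. First, because $\ker T=\mathrm{span}(\textbf{q}_0)$, the multiplier $\gamma$ is determined uniquely by the solvability condition $\textbf{b}+\gamma\textbf{q}_1\perp\textbf{q}_0$; using $\textbf{q}_1=M(R\textbf{q}_0+\textbf{q}^\perp)$ together with the diagonal dominance of $M$ (entries $M_{jj}=c_0$, $|M_{jk}|\le CK^{-m}(\ln K)^{-m}$ for $j\ne k$, cf.~\eqref{eq-lem-M-1}--\eqref{eq-lem-M-2}) one checks that $\textbf{q}_1\cdot\textbf{q}_0\sim c_0 RK$ and $\|\textbf{q}_1\|_\infty\le CR$. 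Hence $|\gamma|\le C\|\textbf{b}\|_\infty/R$ and the effective right-hand side $\textbf{b}':=\textbf{b}+\gamma\textbf{q}_1$ satisfies $\|\textbf{b}'\|_\infty\le C\|\textbf{b}\|_\infty$, whence $\|\textbf{b}'\|_2\le C\sqrt{K}\|\textbf{b}\|_\infty$. Applying Lemma~\ref{lem-T-0} to $T\vec{\textbf{q}}=\textbf{b}'$ then yields
\begin{equation*}
\|\vec{\textbf{q}}\|_2\le C\sqrt{K}\|\textbf{b}\|_\infty,\quad\|\dot{\vec{\textbf{q}}}\|_2\le C\sqrt{K}(\ln K)^{1/2}\|\textbf{b}\|_\infty,\quad\|\ddot{\vec{\textbf{q}}}\|_2\le C\sqrt{K}(\ln K)^{3/2}\|\textbf{b}\|_\infty.
\end{equation*}

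Second, I would invoke the discrete Gagliardo--Nirenberg inequality on the periodic lattice of spacing $h=2\pi/K$: for every $K$-periodic sequence $\vec{v}$,
\begin{equation*}
\|\vec{v}\|_\infty\le C\,h^{1/2}\bigl(\|\vec{v}\|_2+\|\vec{v}\|_2^{1/2}\|\dot{\vec{v}}\|_2^{1/2}\bigr),
\end{equation*}
obtained by applying the classical 1D Sobolev embedding $\|v\|_{L^\infty(S^1)}\le C(\|v\|_{L^2(S^1)}+\|v\|_{L^2(S^1)}^{1/2}\|v'\|_{L^2(S^1)}^{1/2})$ to the piecewise linear extension of $\vec{v}$ on $S^1_{2\pi}$, for which $\|v\|_{L^2(S^1)}^2\sim h\|\vec{v}\|_2^2$ and $\|v'\|_{L^2(S^1)}^2\sim h\|\dot{\vec{v}}\|_2^2$. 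Because $h^{1/2}\sqrt{K}=O(1)$, substituting the $L^2$ bounds above gives
\begin{equation*}
\|\vec{\textbf{q}}\|_\infty\le C(\ln K)^{1/4}\|\textbf{b}\|_\infty,\qquad\|\dot{\vec{\textbf{q}}}\|_\infty\le C\ln K\,\|\textbf{b}\|_\infty,
\end{equation*}
where the multiplicative form $\|\vec{v}\|_2^{1/2}\|\dot{\vec{v}}\|_2^{1/2}$, as opposed to the cruder $\|\vec{v}\|_2+\|\dot{\vec{v}}\|_2$, is used to keep the logarithmic exponents sharp.

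Third, I would read off $\ddot{\vec{\textbf{q}}}$ directly from the equation. The identities $c_1 A_1\vec{f}_j=\ddot{f}_j$, $c_3 A_1\vec{g}_j=-\widehat{d}\,\ddot{g}_j$, $c_2 A_2\vec{v}_j=\tfrac{\widehat{d}-1}{2}\bar{v}_j$ reduce $T\vec{\textbf{q}}=\textbf{b}'$ to
\begin{equation*}
\ddot{f}_j=\phi'_j-c_4 f_j-\tfrac{\widehat{d}-1}{2}\bar{g}_j,\qquad\widehat{d}\,\ddot{g}_j=-\varphi'_j-\tfrac{\widehat{d}-1}{2}\bar{f}_j,
\end{equation*}
where $(\phi'_j,\varphi'_j)$ are the components of $\textbf{b}'$. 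Using $|c_4|,|\widehat{d}-1|\le C\ln K$, $\widehat{d}\sim\ln K$ (Lemma~\ref{lem-d-R}), $|\bar{v}_j|\le 2\|\dot{\vec{v}}\|_\infty$, and the second-step bounds on $\|\vec{\textbf{q}}\|_\infty,\|\dot{\vec{\textbf{q}}}\|_\infty$, the first equation yields $\|\ddot{\vec{f}}\|_\infty\le C(\ln K)^2\|\textbf{b}\|_\infty$ (the worst term being $|\widehat{d}-1|\cdot\|\dot{\vec{g}}\|_\infty$), whereas the division by $\widehat{d}\sim\ln K$ in the second equation yields the better bound $\|\ddot{\vec{g}}\|_\infty\le C\ln K\,\|\textbf{b}\|_\infty$. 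Summing the three contributions delivers \eqref{q-b}.

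The main obstacle is not solvability but the precise bookkeeping of logarithms. The $\sqrt{K}$ loss in passing from $\|\textbf{b}\|_\infty$ to $\|\textbf{b}\|_2$ is exactly absorbed by the $h^{1/2}=O(K^{-1/2})$ factor in the discrete interpolation, but the exponent $2$ appearing in the final estimate is sharp only because the large parameter $\widehat{d}\sim\ln K$ enters as a multiplier in the $\ddot f$-equation and as a divisor in the $\ddot g$-equation, a small algebraic asymmetry that is decisive at the level of powers of $\ln K$.
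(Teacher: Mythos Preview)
Your argument is correct and reaches the stated bound, but it differs substantially from the paper's proof. Two remarks. First, the statement as written (``under the assumption of Lemma~\ref{lem-T-0}'') refers to the system $T\textbf{q}=\textbf{b}+\gamma\textbf{q}_0$ with $\textbf{q}\perp\textbf{q}_0$; you instead treat the $\textbf{q}_1$ version (Lemma~\ref{lem-T}). This is harmless---your reduction to an effective right-hand side $\textbf{b}'$ with $\|\textbf{b}'\|_\infty\le C\|\textbf{b}\|_\infty$ is actually simpler for $\textbf{q}_0$, since then $\gamma=-(\textbf{b}\cdot\textbf{q}_0)/K$ directly---but it is worth noting.

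Second, and more interestingly, the routes diverge at the core step of bounding $\|\textbf{q}\|_\infty$. The paper does \emph{not} interpolate: it builds approximate solutions $\textbf{q}_j$ by discretizing the Green's matrix $G(\theta,s)$ of the limiting continuous ODE system~\eqref{cont} (solved in Lemma~\ref{lem-8-1}), so that $T\textbf{q}_j=\vec{e}_j+\vec{\tau}_j$ with truncation error $\vec{\tau}_j=O(K^{-2})$; the explicit Green's function controls $\|\widehat{\textbf{q}}\|_\infty$ without any log loss, and the $L^2$ estimate of Lemma~\ref{lem-T-0} mops up the $O(K^{-2})$ remainder. This yields the sharper bound $\|\textbf{q}\|_\infty\le C\|\textbf{b}\|_\infty$. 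Your discrete Gagliardo--Nirenberg argument is more elementary---it bypasses Lemma~\ref{lem-8-1} and the Green's function machinery entirely---at the cost of a harmless extra $(\ln K)^{1/4}$ on $\|\textbf{q}\|_\infty$. Both approaches then finish the same way, reading $\ddot{\textbf{q}}$ off the equation; the final $(\ln K)^2$ is governed in either case by the term $(\widehat{d}-1)\bar g_j$ in the $\ddot f$-equation. Your route is shorter and self-contained; the paper's route gives tighter intermediate control and makes the connection to the continuous Jacobi-type operator~\eqref{cont} explicit, which is conceptually relevant to the ``intermediate reduction'' theme of the paper.
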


\begin{proof}
\noindent{\bf{Claim 1:}} There is a positive constant $C$ (independent of $K$) such that
\begin{equation}\label{q-b-1}
\|\textbf{q}\|_\infty\leq C\|\textbf{b}\|_\infty.
\end{equation}
To prove it, we only need to consider the case $\textbf{b}\perp\textbf{q}_0$. For $j=1,\dots,K$, we define
\begin{align*}
\textbf{q}_j=(f_{1,j},\dots,f_{K,j},g_{1,j},\dots,g_{K,j})^T,
\end{align*}
where
\begin{equation*}
f_{l,j}=\frac{2\pi}{K}G_{11}(\theta_l,\theta_j),\
g_{l,j}=\frac{2\pi}{K}G_{21}(\theta_l,\theta_j).
\end{equation*}
This corresponds to take $\phi=\frac{2\pi}{K}\delta(\theta-\theta_j),\varphi=0$ for $j=1,\dots,K$, where $\delta$ is the delta function in the distribution theory.
For $j=K+2,\dots,2K$, we define
\begin{align*}
\textbf{q}_j=(f_{1,j},\dots,f_{K,j},g_{1,j},\dots,g_{K,j})^T,
\end{align*}
where
\begin{equation*}
f_{l,j}=-\frac{2\pi}{K}G_{12}(\theta_l,\theta_1)+\frac{2\pi}{K}G_{12}(\theta_l,\theta_j),\
g_{l,j}=-\frac{2\pi}{K}G_{22}(\theta_l,\theta_1)+\frac{2\pi}{K}G_{22}(\theta_l,\theta_j).
\end{equation*}
This corresponds to take $\phi=0,\varphi=-\frac{2\pi}{K}\delta(\theta-\theta_1)+\frac{2\pi}{K}\delta(\theta-\theta_j)$ for $j=K+2,\dots,2K$.
By the property of Green's matrix, for $j=1,\dots,K$, we have
\begin{equation*}
T\textbf{q}_j=\vec{e}_j+\vec{\tau}_j;
\end{equation*}
for $j=K+2,\dots,2K$, we have
\begin{equation*}
T\textbf{q}_j=-\vec{e}_{K+1}+\vec{e}_j+\vec{\tau}_j,
\end{equation*}
where $\vec{e}_j=(\delta_{j,1},\dots,\delta_{j,2K})^T$ is standard orthonormal basis of $\mathbb{R}^{2K}$ and $\vec{\tau}_j=O(K^{-2})$ is the local truncation error for the Green's matrix in the finite difference method. Since $\{\vec{e}_1,\dots,\vec{e}_K,-\vec{e}_{K+1}+\vec{e}_{K+2},-\vec{e}_{K+1}+\vec{e}_{2K}\}$ is a basis of $\{\textbf{b}\in\mathbb{R}^{2K}\,|\,\textbf{b}\perp\textbf{q}_0\}$ and write $\textbf{b}=\sum_{j=1}^Kb_j\vec{e}_j+\sum_{j=K+2}^{2K}b_j(-\vec{e}_{K+1}+\vec{e}_j)$. Then we get $\textbf{q}=\widehat{\textbf{q}}+\widetilde{\textbf{q}}$, where $\widehat{\textbf{q}}=\sum_{j=1}^Kb_j\textbf{q}_j+\sum_{j=K+2}^{2K}b_j\textbf{q}_j$ and $T\widetilde{\textbf{q}}=\sum_{j\neq K+1}b_j\vec{\tau}_j$.

On one hand, by the property of Green's matrix, we get $\|\widehat{\textbf{q}}\|_\infty\leq C\|\textbf{b}\|_\infty$. On the other hand, by Lemma~\ref{lem-T-0}, we get
\[
\|\widetilde{\textbf{q}}\|_\infty
\leq\|\widetilde{\textbf{q}}\|_2
\leq C\|\sum_{j\neq K+1}b_j\vec{\tau}_j\|_2\leq CK^{1/2}\|\sum_{j\neq K+1}b_j\vec{\tau}_j\|_\infty
\leq CK^{-3/2}\|\textbf{b}\|_\infty.
\]
Combining these two estimates, we get $\|\textbf{q}\|_\infty\leq C\|\textbf{b}\|_\infty$.

\noindent{\bf{Claim 2:}} There is a positive constant $C$ (independent of $K$) such that
\begin{align}\label{estimate-q-b}
\|\textbf{q}\|_*=\|\textbf{q}\|_\infty+\|\dot{\textbf{q}}\|_\infty+\|\ddot{\textbf{q}}\|_\infty\leq C(\ln K)^2\|\textbf{b}\|_\infty.
\end{align}

\noindent{\bf{Proof of Claim 2:}}

Define $\widehat{f}_j=f_{j+1}-f_j$ and $\widehat{g}_j=g_{j+1}-g_j$. Then
\begin{align*}
\left\{
\begin{array}{ll}
c_1(\widehat{f}_j-\widehat{f}_{j-1})+c_2(\widehat{g}_j+\widehat{g}_{j-1})=\phi_j-c_4f_j,
\vspace{1mm}\\
-c_2(\widehat{f}_j+\widehat{f}_{j-1})+c_3(\widehat{g}_j-\widehat{g}_{j-1})=\varphi_j.
\end{array}
\right.
\end{align*}
By using a similar argument, we can get
\begin{equation}
\|\dot{\textbf{q}}\|_\infty\leq C\|\textbf{b}\|_\infty+(\ln K)\|\textbf{q}\|_\infty
\leq C(\ln K)\|\textbf{b}\|_\infty.
\end{equation}
Let $\widetilde{f}_j=f_{j+1}-2f_j+f_{j-1}$ and $\widetilde{g}_j=g_{j+1}-2g_j+g_{j-1}$. Then
\begin{align*}
\left\{
\begin{array}{ll}
c_1\widetilde{f}_j=\phi_j-c_4f_j-c_2(\widehat{g}_j+\widehat{g}_{j-1}),
\vspace{1mm}\\
c_3\widetilde{g}_j=\varphi_j+c_2(\widehat{f}_j+\widehat{f}_{j-1}).
\end{array}
\right.
\end{align*}
Similarly we obtain
\begin{equation}
\|\ddot{\textbf{q}}\|_\infty\leq C\|\textbf{b}\|_\infty+(\ln K)\|\textbf{q}\|_\infty+(\ln K)\|\dot{\textbf{q}}\|_\infty
\leq C(\ln K)^2\|\textbf{b}\|_\infty.
\end{equation}

Actually if one can show that $\|\widehat{\textbf{q}}\|_*\leq C\|\textbf{b}\|_\infty$, then we can get $\|\textbf{q}\|_*\leq C\|\textbf{b}\|_\infty$ by Lemma~\ref{lem-T-0} since the local truncation error for the Green's matrix is $O(K^{-2})$.

\end{proof}

Now we can use Lemma~\ref{lem-T-1} to prove Lemma~\ref{lem-T}.

\begin{proof}[Proof of Lemma \ref{lem-T}]

To prove Lemma \ref{lem-T}, it suffices to prove the a priori estimate~\eqref{q-b-0}. let $\gamma=-(\textbf{b}\cdot\textbf{q}_0)/(\textbf{q}_1\cdot\textbf{q}_0)$. By Lemma~\ref{lem-M}, for $\textbf{q}$ satisfies \eqref{cond-q}, we have
\[
R^{-1}\textbf{q}_1=c_0\textbf{q}_0+O(KR^{-m}),
\]
which implies that $\|R^{-1}\textbf{q}_1\|_\infty\leq C$ and $|R^{-1}\textbf{q}_1\cdot\textbf{q}_0|\geq CK$. Hence $\|\gamma\textbf{q}_1\|_\infty\leq C\|\textbf{b}\|_\infty$. Therefore, by Lemma~\ref{lem-T-1}, we have
\[
\|\vec{\textbf{q}}\|_*\leq C(\ln K)^2\|\textbf{b}+\gamma\textbf{q}_1\|_\infty\leq C(\ln K)^2\|\textbf{b}\|_\infty.
\]

\end{proof}


\section{Appendix B: Energy expansion}

In this section, we give the energy expansion of $\mathcal{E}(U+\phi)$ and prove Lemma \ref{lem-energy}.

\begin{proof}[Proof of Lemma \ref{lem-energy}]
By \eqref{energy}, we get
\begin{align*}
\mathcal{E}(u)&=\underbrace{\sum_{j=1}^K \mathcal{E}(w_{Q_j})}_{J_1}+\underbrace{\frac{1}{2}\sum_{i\neq j}\int_{\mathbb{R}^N}\left(\nabla w_{Q_i}\nabla w_{Q_j}+V(x)w_{Q_i}w_{Q_j}\right)\,dx}_{J_2}\\
&\quad+\underbrace{\frac{1}{p+1}\int_{\mathbb{R}^N}\Big\{\sum_{j=1}^Kw_{Q_j}^{p+1}-\big(\sum_{j=1}^Kw_{Q_j}\big)^{p+1}\Big\}\,dx}_{J_3}.
\end{align*}

{{\bf Claim 1:}} By \eqref{cond-wy}, there are positive constants $I_0$ and $a_0$ such that
\begin{align}
J_1=KI_0+a_0\big(1+o(1)\big)\sum_{j=1}^K|Q_j|^{-m}.
\end{align}
Indeed, by the definition of the energy functional, \eqref{cond-wy} and Taylor's expansion,
\begin{align*}
J_1
&=KI_0+\frac{1}{2}\sum_{j=1}^K\int_{\mathbb{R}^N}\left(V(x)-1\right)w_{Q_j}^2\,dx
=KI_0+a_0\sum_{j=1}^K\Big(|Q_j|^{-m}+O(R^{-m-\sigma})\Big),
\end{align*}
where
\begin{equation*}
I_0=\big(\frac{1}{2}-\frac{1}{p+1}\big)\int_{\mathbb{R}^N}w^{p+1}\,dx,\ \text{and}\
a_0=\frac{a}{2}\int_{\mathbb{R}^N}w^2\,dx.
\end{equation*}

%
%

{{\bf Claim 2:}} By \eqref{cond-wy} we have
\begin{equation}
J_2=\frac{1}{2}\sum_{i\neq j}\int_{\mathbb{R}^N}w_{Q_j}^pw_{Q_i}\,dx+O(KR^{-2m}).
\end{equation}
Indeed, the term $J_2$ can be divided into two parts:
\begin{equation*}
J_2=\underbrace{\frac{1}{2}\sum_{i\neq j}\int_{\mathbb{R}^N}w_{Q_j}^pw_{Q_i}\,dx}_{J_{21}}+\underbrace{\frac{1}{2}\sum_{i\neq j}\int_{\mathbb{R}^N}\left(V(x)-1\right)w_{Q_j}w_{Q_i}\,dx}_{J_{22}}.
\end{equation*}
%
For $J_{22}$, by \eqref{cond-wy}, we have
\begin{align*}
|J_{22}|&
\leq CR^{-m}\sum_{i\neq j}e^{-|Q_j-Q_i|}|Q_j-Q_i|^{-(N-3)/2}
\leq CKR^{-m}e^{-d}d^{-(N-3)/2}
\leq CKR^{-2m}.
\end{align*}

{{\bf Claim 3:}} Let $\textbf{Q}\in\Lambda_K$, we have
\begin{equation}
J_3=-\sum_{i\neq j}\int_{\mathbb{R}^N}w_{Q_j}^pw_{Q_i}\,dx+O(Ke^{-\min\{2,\frac{p+1}{2}\}d}d^{-\frac{N-3}{2}}).
\end{equation}
Indeed, write
\begin{equation*}
J_3=\frac{1}{p+1}\int_{\mathbb{R}^N}\Big\{\sum_{j=1}^Kw_{Q_j}^{p+1}-\big(\sum_{j=1}^Kw_{Q_j}\big)^{p+1}\Big\}\,dx=-\int_{\mathbb{R}^N}E_3\,dx.
\end{equation*}
For $x\in\Omega_{K+1}^\ell$, where $\ell\in\mathbb{N}$ chosen later, we have
\begin{align*}
|E_3(x)|&\leq\frac{1}{p+1}\Big\{\sum_{j=1}^Kw_{Q_j}^{p+1}+\big(\sum_{j=1}^Kw_{Q_j}\big)^{p+1}\Big\}
\leq\frac{1}{p+1}\Big(\sum_{j=1}^Kw_{Q_j}^{p+1}+K^p\sum_{j=1}^Kw_{Q_j}^{p+1}\Big).
\end{align*}
By choosing $\ell\geq\frac{4(p+2m+3)}{pm}$ (but independent of $K$), we get
\begin{align*}
\int_{\Omega_{K+1}^\ell}|E_3|\,dx
&\leq CK^p\sum_{j=1}^K\int_{\Omega_{K+1}^\ell}w_{Q_j}^{p+1}\,dx
\leq CK^{p+1}w^{p}(\ell\rho/2)
\leq CK^{-2m-2}.
\end{align*}
For $x\in\Omega_j^\ell$, $j=1,2,\dots,K$, by a similar argument in the proof of Lemma~\ref{lem-error}, we have
\begin{align*}
\Big|E_3-w_{Q_j}^p\sum_{i\neq j}w_{Q_i}\Big|
&\leq C\ell^{(N-1)(p-1)}w_{Q_j}^{p-1}\Big(\sum_{i\neq j}w_{Q_i}\Big)^2.
\end{align*}
Applying Lemma \ref{lem-act}, we get
\begin{align*}
\Big|J_3+\sum_{i\neq j}\int_{\mathbb{R}^N}w_{Q_j}^pw_{Q_i}\Big|
&\leq C\sum_{j=1}^K\int_{\Omega_j^\ell}w_{Q_j}^{p-1}\Big(\sum_{i\neq j}w_{Q_i}\Big)^2\,dx+CK^{-2m-2}\\
&\leq CKe^{-\min\{2,\frac{p+1}{2}\}d}d^{-\frac{N-3}{2}}
\leq CK^{-\min\{2,\frac{p+1}{2}\}m+1}(\ln K)^{1/2},
\end{align*}
which implies Claim 3.

Combining our Claim 1, Claim 2 and Claim 3, the desired result follows from Lemma \ref{lem-act}.
\end{proof}

At the last, by \eqref{energy}, Lemma \ref{lem-error} and Proposition \ref{prop-3-1},
\begin{align*}
\mathcal{E}(U+\phi)&=\mathcal{E}(U)
+\frac{1}{2}\int_{\mathbb{R}^N}\Big\{|\nabla\phi|^2+V(x)\phi^2\Big\}\,dx
+\int_{\mathbb{R}^N}\big(\nabla U\nabla\phi+V(x)U\phi\big)\,dx\\
&\quad-\frac{1}{p+1}\int_{\mathbb{R}^N}\Big\{(U+\phi)_+^{p+1}-U^{p+1}\Big\}\,dx\\
&=\mathcal{E}(U)
+\frac{1}{2}\int_{\mathbb{R}^N}\Big\{(U+\phi)_+^p-U^p+E\Big\}\phi\,dx\\
&\quad-\frac{1}{p+1}\int_{\mathbb{R}^N}\Big\{(U+\phi)_+^{p+1}-U^{p+1}-(p+1)U^p\phi\Big\}\,dx\\
&=\mathcal{E}(U)+O(K\|\phi\|_{**}^2)+O(K\|\phi\|_{**}\|E\|_{**})\\
&=KI_0+(a_0+o(1))\sum_{j=1}^K|Q_j|^{-m}
-\frac{1}{2}\sum_{i\neq j}(\gamma_0+o(1))w(|Q_i-Q_j|)\\
&\quad+O\big(K^{-\min\{2,\frac{p+1}{2}\}m+1}(\ln K)^{1/2}\big).
\end{align*}


\end{document}